\newcommand{\N}{\mathbb{N}}
\newcommand{\Z}{\mathbb{Z}}
\newcommand{\Q}{\mathbb{Q}}
\newcommand{\R}{\mathbb{R}}
\newcommand{\C}{\mathbb{C}}
\newcommand{\F}{\mathbb{F}}
\newcommand*{\ra}{\rightarrow}
\newtheorem{prop}{Proposition}[chapter]
\newtheorem{theorem}[prop]{Theorem}
\newtheorem{lemma}[prop]{Lemma}
\newtheorem{corollary}[prop]{Corollary}
\newtheorem{proposition}[prop]{Proposition}
\newtheorem*{mainquestion}{Main problem}
\theoremstyle{definition}
\newtheorem{definition}[prop]{Definition}
\theoremstyle{remark}
\newtheorem{remark}[prop]{Remark}
\newtheorem{example}[prop]{Example}
\title{Quantum graphs and their spectra}
\author{Ralf Rueckriemen}
\date{\today}
\begin{document}

\frontmatter

\maketitle

\chapter*{Abstract}
\addcontentsline{toc}{section}{Abstract}
We show that families of leafless quantum graphs that are isospectral for the standard Laplacian 
are finite. We show that the minimum edge length is a spectral invariant. We give an upper bound for the size of isospectral 
families in terms of the total edge length of the quantum graphs.

 We define the Bloch spectrum of a quantum graph to be the map that assigns to each element in the deRham cohomology 
the spectrum of an associated magnetic Schr\"odinger operator. We show that the 
Bloch spectrum determines the Albanese torus, the block structure and the planarity of the graph. It determines a geometric dual of a 
planar graph. This enables us to show that the Bloch spectrum identifies and completely determines planar $3$-connected quantum graphs.

\chapter*{Acknowledgement}
\addcontentsline{toc}{section}{Preface}

First and foremost, I would like to thank my advisor Carolyn Gordon. Without her constant support, this thesis could never have been 
written. She let me freely choose a topic of research and fully supported my decision although the topic that I finally chose, 
quantum graphs, is not at the center of her expertise. 
She always took her time to meet with me. I am especially grateful for the scrutiny with which she looked 
over my results. I do not remember how many times I came to see her all excited about a new `proof' I had found only to be 
shown that my arguments still had gaping holes. She taught me to write rigorous and readable math. Lastly she launched me 
into the academic community, she seems to know everyone, she enabled me to go to numerous conferences and meet all the 
important people in my field.

I would like to thank the administration and the members of this department for their excellent guidance through the program 
and their continuous belief in my abilities. I struggled quite a bit in my first two years here to pass all the qualification 
exams but the department was always willing to give me another chance and urged me to try again until I finally managed 
to fulfill all the requirements. Let me especially point out the department administrators Tracy Molony, Annette Luce and 
Stephanie Kvam and 
the professors Peter Doyle, Carolyn Gordon, Tom Shemanske, Craig Sutton, Dorothy Wallace and Pete Winkler. 

Finally I would like to thank the graduate students of the department. The sense of community and their willingness to help 
was not only much appreciated in the academic realm but also for my social well-being. I would like to list all of them but 
let me mention at least a few by name: Patricia Cahn and Matt Mahony for numerous discussions about my research and math in 
general, Sarah Wright for recommendations about good teaching, and Katie Kinnard for help in writing a job application that 
could actually lead to a job.

\tableofcontents


\listoffigures

\mainmatter


\chapter{Introduction}

The relationship between a space and the spectrum of a differential operator on it is often very 
close. It is studied for manifolds, orbifolds, Riemann surfaces, combinatorial graphs and quantum graphs. The question 
to what degree the spectrum of the Laplace operator determines the underlying space was popularized by Kac in \cite{Kac66} 
in the manifold setting. Numerous examples of isospectrality have been found, see the history for references to some examples.

This opens up a whole array of new questions: Which properties of a space are spectrally determined? 
Which classes of spaces are spectrally determined? How 
can one construct examples of isospectrality? How big can isospectral families be? Trying to answer these and related questions 
is at the heart of the field of spectral geometry. 

In this thesis we will restrict our attention to quantum graphs. They are the most recent object of interest in this field and often 
provide a bridge between manifolds and combinatorial graphs. 

Our first result says that families of leafless isospectral quantum graphs are finite, that is, there can be at most finitely
many leafless quantum graphs that have the same spectrum. 

The main results in this thesis come from considering an entire collection of operators and their spectra on a given quantum graph. 
We show that this collection of spectra determines various properties of the quantum graph, some of which are not determined by 
the spectrum of a single operator. 
For example, it determines the block structure which provides a broad overview of the structure of the graph. It also 
determines whether or not a quantum graph is planar. It completely determines quantum graphs in a certain class of graphs.

We will first give some history and context of the subject and then give a more detailed overview of this thesis.

\section{History}

\subsection{Quantum graphs}

A quantum graph (or metric graph) is a finite combinatorial graph where each edge is equipped with a positive finite length. 
Usually there is a Schr\"odinger operator acting on the graph implicitly understood in the background. Some people 
put the operator as part of the definition of a quantum graph.

A function on a quantum graph consists of a function on each edge, where the edges are viewed as intervals. The Schr\"odinger operator 
acts on the space of all functions that are smooth on each edge and satisfy some suitable boundary conditions at the vertices. 
A Schr\"odinger operator is a second order differential operator on each edge with leading term the standard Laplacian 
$-\left(\frac{\partial}{\partial x}\right)^2$. The first order part is called the magnetic potential.

The spectrum of a Schr\"odinger operator on a quantum graph is real, discrete, infinite, and bounded from below, it has a 
single accumulation point at infinity. The multiplicity of each eigenvalue is finite. This is true in a significantly broader 
context, a proof can be found in \cite{Kuchment04}.

Quantum graphs are studied in mathematics and physics. They first arose in physical chemistry in the 1930s in work by Pauling. 
He used them as models for $\pi$-electron orbitals in conjugate molecules. The atoms in the molecule are the vertices, the bonds 
between the atoms are the paths that the electrons travel on. The movement of the electrons then obeys a Schr\"odinger equation.
Quantum graphs are used to approximate behavior and gain a theoretical understanding 
of objects in mesoscopic physics and nanotechnology. They serve as simplified models in many settings involving 
wave propagation. Mathematicians like to study the spectral theory of quantum graphs because it offers a nice trade-off between the 
 richness of structure for manifolds and the ease of computations of examples for combinatorial graphs.
The survey articles \cite{Kuchment08} and \cite{Post09} provide an excellent introduction and numerous 
references to the literature.

The fact that quantum graphs are essentially $1$-dimensional makes explicit computations possible in various situations. 
Assume the Schr\"odinger operator is of the form $\left(i\frac{\partial}{\partial x}+A(x)\right)^2$ on all edges.
Then the eigenvalues can be found numerically as the zeros of the determinant of a version of the adjacency 
matrix of the quantum graph. The eigenfunctions are all simple sine waves 
on each edge. For this kind of operator we also have algebraic 
relations between the spectrum and the geometry of the quantum graph; that is, there exists an exact trace formula. 
The trace formula is one of the key 
tools in the study of quantum graphs and their spectra. It is a distributional 
equality where the left side of the equation is an infinite sum over all the eigenvalues of the Schr\"odinger operator. 
The right side of the equation contains geometrical information about the quantum graph such as the the total edge length, 
and the Euler characteristic of the graph, as well as an infinite sum over all periodic orbits in the graph.

Trace formulae or asymptotic expansions exist in a variety of settings. The first such formulae were the Selberg trace formula for 
manifolds with negative curvature (\cite{Selberg56}), and the Poisson summation formula for the Laplacian on flat tori. 
Although these two are exact formulas, other trace formulas on manifolds are just asymptotic. The wave trace relates the eigenvalues of 
the Laplacian on a closed surface to the closed geodesics, see \cite{ColindeVerdiere73} and \cite{DuistermaatGuillemin75}.
Quantum graphs, on the other hand, admit exact trace formulae. The first trace 
formula for quantum graphs was proven in \cite{Roth83}. He used heat kernel methods and restricted himself to the standard Laplacian 
and standard boundary conditions. Since then various generalizations have been shown, some based on heat kernel methods,
others on wave kernel techniques, \cite{KPS07}, \cite{KottosSmilansky99}. The trace formula can be interpreted as an 
index theorem for quantum graphs, \cite{FKW07}. The paper 
 \cite{BolteEndres08} gives an introduction and a survey of the various versions of the trace formula. 

The behavior and distribution of the eigenvalues and eigenfunctions has been studied from various perspectives. There are sharp 
lower bounds on the eigenvalues in terms of the total edge length of the quantum graph, \cite{Friedlander05}. 
Nodal domains of eigenfunctions and their relation to isospectrality have been studied 
in \cite{GUW04}, \cite{BSS06}, \cite{Berkolaiko08} and \cite{BOS08}. 

It has been observed that quantum graphs share many properties with quantum chaotic systems, \cite{KottosSmilansky97}, 
\cite{KottosSmilansky99}. It is conjectured that for a suitable 
generic sequence of quantum graphs with increasing number of edges the limiting 
statistical distribution of the eigenvalues coincides with the one for families of random matrices with increasing rank, see 
\cite{GnutzmannSmilansky06} and \cite{Keating08}.

\subsection{Spectral geometry and the problem of isospectrality}


The first examples of isospectral non-isometric manifolds were found by Milnor in \cite{Milnor64}. 
The fact that combinatorial graphs are not determined by their spectrum is even older, \cite{CollatzSinogowitz57}. 
There are copious examples of isospectrality, some references are \cite{SSW06}, \cite{RSW08} for isospectral orbifolds, 
\cite{Vigneras80}, \cite{Buser92} and \cite{BrGoGu98} for Riemann surfaces, and
\cite{vonBelow99}, \cite{GutkinSmilansky01}, \cite{BPB09} for quantum graphs. 

In all that follows we will only consider finite combinatorial graphs, finite quantum graphs with finite edge lengths and 
compact closed oriented manifolds. By Riemann 
surface we mean a compact oriented 2-dimensional Riemannian manifold with constant curvature negative one.



It is well known that certain features of a space are encoded in the spectrum. For instance, in the combinatorial graph 
setting the number of vertices of a combinatorial graph is trivially spectrally 
determined. (It is equal to the number of eigenvalues counting multiplicities.) One can use Weyl asymptotics and the asymptotics of the heat trace 
to show that the dimension,  
volume and total scalar curvature of manifolds, \cite{BGM71}, and the dimension and volume of Riemannian orbifolds, 
\cite{Donnelly79} and \cite{DGGW08}, are 
spectrally determined. Quantum graphs admit a much stronger exact trace formula instead of
just asymptotics. The trace formula directly shows 
that the total edge length and the Euler characteristic of a quantum graph are spectrally determined. Most of the results in this 
thesis come from a careful study of the other terms in the trace formula. 

On the other hand, many properties of a space are not spectrally determined. There are examples of isospectral manifolds with 
different fundamental groups \cite{Vigneras80} and different maximal scalar curvature \cite{GGSWW98}. Orientability of manifolds is not 
spectrally determined either \cite{BerardWebb95}, \cite{MiatelloRossetti01}. Isospectral orbifolds can have different 
isotropy orders, \cite{RSW08}. There are examples of isospectral 
combinatorial and quantum graphs where one is planar and the other one is not, see \cite{CDS95} and \cite{vonBelow99}.

It has been shown that 2-dimensional flat tori \cite{BGM71} and round spheres of dimension up to 6 \cite{Tanno73} are uniquely 
determined by the spectrum of the standard Laplacian among oriented manifolds. Complete combinatorial graphs are spectrally 
determined \cite{Chung97}.  

Constructions of isospectral manifolds are usually through the Sunada method, \cite{Sunada85} and its generalizations 
(see \cite{Gordon09} for a summary), 
or the torus action method, \cite{Gordon01a} or \cite{Schueth01a}. The Sunada method generalizes to various other settings 
including combinatorial and quantum 
graphs, \cite{BPB09}. Isospectral combinatorial graphs can also be found through explicit computations or through Seidel switching, 
\cite{Seress00}. 

There exist smooth isospectral deformations of manifolds, see for example \cite{GordonWilson84} and \cite{Schueth99}, 
so the isospectral families are infinite
in this case. It is conjectured that isospectral families of manifolds are compact; this was first shown for 
plane domains \cite{OPS88b} and closed surfaces \cite{OPS88a}.
Since then it has been shown in some special cases, for example manifolds with absolutely bounded 
sectional curvature \cite{Zhou97}.
On the other hand, for Riemann surfaces of genus $g$ the size of an isospectral family is at most $e^{720g^2}$, 
\cite{Buser92}. Families of isospectral combinatorial graphs have the same number of vertices and thus are trivially finite. We will 
show in this thesis that isospectral families of leafless quantum graphs are finite and prove an upper bound 
dependent on the total edge length of the quantum graph.

\subsection{The Bloch spectrum}

The major theme of this thesis will be a variation of the question of which properties are spectrally determined. Instead of 
looking at the spectrum of a single operator one looks at a whole collection of spectra. One then asks to what degree the 
additional spectra give more information than a single spectrum. One way of doing so is to 
look at the spectrum of the Laplacian on manifolds 
acting on functions and differential forms. This approach has been used to show that the round sphere in all dimensions is 
determined by the spectra of the Laplacian acting on functions and $1$-forms, \cite{Patodi71}. Another idea, the one we are 
going to use,  is to consider 
only operators acting on functions but vary the lower order terms of the differential operator. We will look at the 
spectra of all Schr\"odinger operators of 
the form $(d+2\pi i \alpha)^*(d+2\pi i\alpha)$ where the $1$-form $\alpha$ changes and call this collection of spectra the 
{\bf Bloch spectrum} of the quantum graph. 

The theory of the Bloch spectrum or Bloch theorem also goes by the name of Floquet theory. It was first invented by the 
mathematician Gaston Floquet in the late 19th century, he used it to study certain periodic differential equations. 
It was then reinvented and generalized to higher dimensions by the physicist Felix Bloch in the 1920s. 
He used it to describe the movement of a free electron in a crystalline material. 
The movement of the electron is described by the classical Schr\"odinger equation, modified by a potential that represents the 
atoms in the material. Reduced to the $1$-dimensional case this means we have the Schr\"odinger equation 
$-\frac{\hbar^2}{2m_e}\frac{\partial^2 \psi(x)}{\partial x^2}+V(x)\psi(x)=\lambda \psi(x)$ with a periodic 
potential $V$ and then look for globally bounded solutions. The solutions have the form 
$\psi(x)=\theta_{\lambda}(x)e^{i\sqrt{\lambda} x}$ for some 
periodic function $\theta_{\lambda}$ and are called Bloch waves. The set of $\lambda$ where 
the Schr\"odinger equation has a nontrivial solution is the set of energy levels. 

In mathematical language this can be rephrased as follows. 
The classical Bloch spectrum of a torus $\R^n/L$ assigns to each character $\chi:L\ra \C^*$ the spectrum of the standard Laplacian 
acting on the space of functions on $\R^n$ that satisfy $f(x+l)=\chi(l)f(x)$ for all $l\in L$.   See, for example, \cite{ERT84} 
for inverse spectral results concerning the Bloch spectrum.  As pointed out by Guillemin \cite{Guillemin90}, the Bloch 
spectrum can also be interpreted as the collection of spectra of all operators $\nabla^*\nabla$ acting on 
sections of a trivial bundle, as $\nabla$ varies over all connections with zero curvature. The set of these connections is given by 
$\nabla=(d+i\alpha)$, where $\alpha$ is a harmonic 1-form on $\R^n/L$. 
(One may take $\alpha$ to be any closed 1-form, but the spectrum depends only on the cohomology class of $\alpha$, so one may 
always assume $\alpha$ to be harmonic.) The correspondence with the classical notion is given by the association of the character 
$\chi(l)=e^{2\pi i\alpha(l)}$ to the harmonic form $\alpha$, where now $\alpha$ is viewed as a linear functional on $\R^n$. 
This interpretation of the Bloch spectrum admits a generalization 
to operators acting on sections of an arbitrary Hermitian line bundle over a torus, where now one considers all connections 
with, say, harmonic curvature, see \cite{GGKW08}. 

We will transplant this idea from tori to quantum graphs. Both notions of the Bloch spectrum of a torus 
can be carried over to quantum graphs and we will show that they are equivalent. We will then ask which properties 
of a quantum graph are determined by the Bloch spectrum and which classes of quantum graphs can be identified and 
characterized by their Bloch spectrum.

\section{Overview}

Before we delve into quantum graphs we first need a few basic facts about combinatorial graphs collected in Chapter 2. We will 
define the block structure, which contains the broad structure of the graph. We will then talk about planarity of 
graphs and how to detect it, explain the notion of a dual of 
a planar graph and finally talk about the spectral theory of combinatorial graphs. 
 
In Chapter 3 we will define quantum graphs and then proceed to define a notion of differential forms on quantum graphs. 
We will show that this notion reproduces the expected deRham cohomology and use the exterior derivative $d$ from functions to $1$-forms
and its adjoint to define the Schr\"odinger operators. To make the eigenvalue problem well-defined we need to impose some 
boundary conditions at the vertices. We will impose the Kirchhoff boundary conditions on functions, which require that the 
function be continuous and that the sum of the inward pointing 
derivatives on all edges incident at the vertex be zero. Kirchhoff conditions model a conservation of flow. For $1$-forms 
we use the naturally associated conditions. We want that the differential of a function that satisfies Kirchhoff boundary 
conditions satisfies the boundary conditions for $1$-forms.

We will cite and explain a trace formula in Chapter 4. It is the main tool we use to extract information about the 
quantum graph from the spectrum. 

In Chapter 5 we will consider the following question:

\itshape
\noindent How big can a family of isospectral quantum graphs be? 
\upshape

We will consider the standard Laplacian $\Delta_0=d^*d$ on a leafless quantum graph. We show that the minimum edge 
length\footnote{For technical reasons a loop of length $l$ is counted as two edges of length $l/2$, see \ref{loop_half_count} 
for details.}
is a spectral invariant and normalize it to be $1$. 
It turns out that the size of isospectral families can then be bounded in terms of the total 
edge length of the quantum graph. Using constructions of isospectral combinatorial graphs we show that there exist isospectral 
families of equilateral quantum graphs whose size grows exponentially in the total edge length of the quantum graphs.
We proceed to show the following upper bound:
\begin{theorem}
The size of isospectral families of leafless quantum graphs with minimum edge length $1$ and total edge length $\mathcal{L}$ is 
at most $e^{7\mathcal{L}\ln(\mathcal{L})}$. In particular all isospectral families are finite.
\end{theorem}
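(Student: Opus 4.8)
The plan is to separate the two kinds of data that specify a quantum graph — the underlying combinatorial multigraph and the vector of edge lengths — and to bound the number of possibilities for each. First I would reduce to a finite combinatorial problem. Since the minimum edge length is a spectral invariant normalized to $1$, every edge has length at least $1$, so the number of edges satisfies $E \le \mathcal{L}$. Under Kirchhoff conditions a degree-$2$ vertex is spectrally invisible (continuity plus equality of the two derivatives just says the function is $C^1$ across it), so I would suppress such vertices and assume every genuine vertex has degree at least $3$; then $3V \le \sum_v \deg(v) = 2E$, giving $V \le \tfrac{2}{3}E \le \mathcal{L}$. Consequently only finitely many combinatorial multigraphs occur, and I would count them crudely: with at most $\mathcal{L}$ vertices there are at most $\binom{V}{2}+V \le V^2 \le \mathcal{L}^2$ loop-allowing edge slots, so the number of labelled multigraphs with $E \le \mathcal{L}$ edges is at most $(\mathcal{L}^2)^{\mathcal{L}} = e^{2\mathcal{L}\ln\mathcal{L}}$, up to lower-order factors for the choices of $V$ and $E$.

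Next, for a \emph{fixed} combinatorial graph $G$, I would invoke the trace formula of Chapter 4. The spectrum determines the periodic-orbit length spectrum, i.e.\ the multiset of metric lengths $L_w(\vec{\ell}) = \sum_e n_{w,e}\,\ell_e$ of the closed orbits $w$ of $G$ together with their amplitudes, and crucially these amplitudes arise from the vertex scattering matrices and hence depend only on the degrees of $G$, not on $\vec{\ell}$. Two isospectral quantum graphs built on the same $G$ with edge-length vectors $\vec{\ell}$ and $\vec{\ell}'$ therefore share the entire length spectrum. The shortest orbits give this concrete bite: the back-and-forth traversal of a single edge $e$ with endpoints of degree $\ge 3$ is a closed orbit of length $2\ell_e$, whose amplitude is a product of nonzero reflection coefficients $(\tfrac{2}{d}-1)$ and is therefore nonzero. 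Thus each value $2\ell_e$ lies in the shared length spectrum, and an isospectral $\vec{\ell}'$ must match each $2\ell'_e$ to some element of it. The goal of this step is to show that, for each $G$, the edge-length vectors isospectral to a given one number at most $e^{c\,\mathcal{L}\ln\mathcal{L}}$; combined with the combinatorial count this produces the stated bound with total constant $7$.

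The hard part is the finiteness itself, together with making it quantitative. The set of $\vec{\ell}' \in [1,\mathcal{L}]^E$ whose length spectrum matches that of $\vec{\ell}$ is a bounded semianalytic subset of the cube, and I would argue it is $0$-dimensional, hence finite, by ruling out continuous isospectral deformations: along any such deformation the combinatorial labelling of the shortest orbits is locally constant, so the matched linear forms $L_w$ must stay constant, and once one has collected $E$ independent such forms (for instance the $E$ forms $2\ell_e$ above) this forces local rigidity of $\vec{\ell}$. The genuine obstacle is controlling \emph{accidental coincidences} in the length spectrum, where distinct combinatorial orbits happen to have equal metric length: then the amplitude at a value is a sum and the individual contributions cannot be read off directly. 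Such coincidences occur only on a measure-zero set of edge lengths, but the argument must handle all $\vec{\ell}$ uniformly, and it is precisely in disentangling which length-spectrum value corresponds to which linear form — and bounding the number of admissible matchings economically, using only the short orbits so that the per-graph count stays of the form $e^{O(\mathcal{L}\ln\mathcal{L})}$ — that the explicit exponent $7\mathcal{L}\ln\mathcal{L}$ is produced. Finiteness of every isospectral family then follows at once, since $\mathcal{L}$ is itself a spectral invariant.
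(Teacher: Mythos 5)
There is a genuine gap, and it sits exactly where you located the difficulty but did not resolve it. Your concrete anchor step --- that each value $2\ell_e$ lies in the shared length spectrum because the back-and-forth orbit on $e$ has nonzero amplitude --- is false as stated: the observable coefficient at a given length in the trace formula is the \emph{sum} over all periodic orbits of that length, and these can cancel even though each individual amplitude is nonzero. The paper exhibits precisely this phenomenon: a leafless graph with an edge of length $\frac{23}{10}$ in which the orbit doubling that edge (coefficient $\frac{23}{20}$) is cancelled by four orbits of the same total length $\frac{23}{5}$ with one backtrack each (coefficient $-\frac{23}{80}$ apiece), so the edge is invisible at length $\frac{23}{5}$; this is why the rational-independence argument of Gutkin and Smilansky does not extend to general edge lengths. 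Your fallback --- that for fixed combinatorics the isospectral set in $[1,\mathcal{L}]^E$ is a zero-dimensional semianalytic set --- would at best give finiteness per graph with no cardinality bound: ruling out continuous deformations does not bound the number of isolated solutions, and a zero-dimensional semianalytic set can have arbitrarily many points absent explicit degree control. So the exponent $7\mathcal{L}\ln\mathcal{L}$ is asserted rather than derived; the per-graph count, which you correctly identify as the hard part, is never produced.

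The idea you are missing is the paper's discreteness lemma, Lemma \ref{half_linear_combination}: every edge length of any leafless isospectral partner $G'$ is a $\frac{1}{2}\N$-linear combination of the edge lengths of $G$. Its proof defeats cancellation by a minimality-plus-parity trick rather than by isolating individual orbits: consider the \emph{shortest} orbit length $L$ involving an edge length of $G'$ outside the $\frac{1}{2}\N$-span of the lengths of $G$; any orbit of length $L$ either doubles one such new edge (two backtracks) or is a cycle passing through a new edge once (zero backtracks), so by Lemma \ref{positivity} (leafless, all degrees $\geq 3$) \emph{all} coefficients at $L$ are strictly positive and their sum cannot vanish, producing a term in the trace formula of $G'$ at a length absent from that of $G$ --- contradiction. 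Combined with minimum edge length $1$ (spectrally determined, Lemma \ref{same_minimal_length}) and total length $\mathcal{L}$, this makes the candidate edge-length lists an explicitly countable set, at most $M^{4\lfloor\mathcal{L}\rfloor}$ of them with $M=\min\{\lfloor\mathcal{L}\rfloor,3\chi-3\}$ (Lemma \ref{possible_edge_length_sets}), and the theorem follows by multiplying this with a combinatorial-graph count of order $(\frac{2}{3}M+1)^{2M}$ and $M!$ length assignments; that product is where the constant $7$ comes from. Your first step, the combinatorial count via $E\leq\mathcal{L}$ and $V\leq\frac{2}{3}E$, is sound and parallels the paper's.
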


We will define the notion of the Albanese torus of a quantum graph in Chapter 6. It is defined as the quotient of the 
real homology by the integer homology of the graph. It inherits an inner product structure that makes it a Riemannian manifold. 
The Albanese torus contains information about the length of the cycles in a quantum graph and about which cycles overlap. 

We will define the notion of Bloch spectrum in Chapter 7. Using our notion of differential forms we define the Bloch 
spectrum as the collection of all spectra of Schr\"odinger operators of the form $\Delta_{\alpha}=(d+2\pi i\alpha)^*(d+2\pi i\alpha)$ and 
vary the $1$-form $\alpha$. Similarly to the setting of flat tori the spectrum 
depends only on the equivalence class of $\alpha$ in $H_{dR}^1(G,\R) \slash H_{dR}^1(G,\Z)$.
We then introduce the Bloch spectrum using characters of the fundamental group. 
We will show the equivalence of the two notions by associating the character $\chi_{\alpha}(\gamma)=e^{-2\pi i\int_{\gamma}\alpha}$ 
to a $1$-form $\alpha$, where $\gamma \in \pi_1(G)$.

It is known (see Chapter 7 for details) that the spectrum of the standard Laplacian $\Delta_0$ determines the dimension $n$ of 
$H^1(G,\R)$.  Thus from that spectrum alone, we know that $H^1(G,\R)/H^1(G,\Z)$ is isomorphic as a torus (i.e., as a Lie group) 
to $\R^n/ \Z^n$. 
Hence we can view the Bloch spectrum as a map that associates a spectrum to each $\alpha\in \R^n/ \Z^n$. 
We have now set up all the machinery to start answering our main question:

\begin{mainquestion}
Suppose we are given a map that assigns a spectrum to each element $\alpha$ of $\R^n / \Z^n$ and we know these spectra form the Bloch 
spectrum of a quantum graph $G$. From this information, can one reconstruct $G$ both 
combinatorially and metrically?
\end{mainquestion}

In Chapter 8 we show that the Bloch spectrum determines the length of a shortest 
representative of each element in $H_1(G,\Z)$, see Theorem \ref{Bloch_H1}. 
We will consider a generic $1$-form $\alpha$, i.e., one whose orbit is dense in the torus $\R^n/\Z^n$, 
and we will just consider the spectra associated to an interval in the orbit of $\alpha \in \R^n/\Z^n$.
Theorem \ref{Bloch_H1} is the main theorem that relates the Bloch spectrum 
to the quantum graph. The other theorems are just consequences from this one. 

We will use the results of Chapter 8 to find properties that are determined by the Bloch spectrum in Chapter 9. 
We show that this information can be used to 
compute the lengths of all cycles in the quantum graph and the lengths of their overlaps. This shows
\begin{theorem}
 The Bloch spectrum determines the Albanese torus, $Alb(G)=H_1(G,\R) \slash H_1(G,\Z)$, of a quantum graph as a Riemannian manifold. 
\end{theorem}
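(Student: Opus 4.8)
The plan is to reduce the reconstruction of $Alb(G)=H_1(G,\R)/H_1(G,\Z)$ as a Riemannian manifold to the determination of a single Gram matrix, and then to fill in that matrix from the length data supplied by Theorem \ref{Bloch_H1}. As a flat torus, $Alb(G)$ is isometric to $\bigl(H_1(G,\R),\langle\cdot,\cdot\rangle\bigr)/H_1(G,\Z)$, and for a graph the first homology is simply the cycle space: each class $\gamma\in H_1(G,\Z)$ has a unique integer edge-coefficient vector $c_e(\gamma)$ (a flow vanishing at every vertex), and the inherited inner product is $\langle\gamma,\delta\rangle=\sum_e c_e(\gamma)\,c_e(\delta)\,l_e$. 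The isometry class of such a torus is completely encoded by the Gram matrix $\bigl(\langle\gamma_i,\gamma_j\rangle\bigr)_{i,j}$ of any integral basis $\gamma_1,\dots,\gamma_n$ of $H_1(G,\Z)$. So it suffices to show that the Bloch spectrum determines all of these numbers.

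First I would fix an integral basis realized by \emph{simple} (embedded) cycles $C_1,\dots,C_n$, for instance the fundamental cycles of a spanning tree, so that every coefficient $c_e(\gamma_i)$ lies in $\{-1,0,1\}$. For such a basis the diagonal entries are cycle lengths, $\langle\gamma_i,\gamma_i\rangle=\sum_{e\in C_i}l_e=\mathrm{length}(C_i)$, and the off-diagonal entries measure overlaps, $\langle\gamma_i,\gamma_j\rangle=\pm\,\mathrm{length}(C_i\cap C_j)$, the sign recording whether the two cycles traverse their common edges coherently. To recover these from the shortest-representative length $\ell$ of Theorem \ref{Bloch_H1} I would use the polarization identity $\langle\gamma_i,\gamma_j\rangle=\tfrac12\bigl(\ell(\gamma_i)+\ell(\gamma_j)-\ell(\gamma_i\mp\gamma_j)\bigr)$, choosing the sign of $\gamma_j$ so that the common edges cancel. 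When the cancellation is exact, $\gamma_i\mp\gamma_j$ is represented by the symmetric-difference cycle of length $\mathrm{length}(C_i)+\mathrm{length}(C_j)-2\,\mathrm{length}(C_i\cap C_j)$, and the identity returns precisely the overlap length.

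The main obstacle is exactly this last step: we are not handed the cycles $C_i$ themselves but only the abstract length function $\ell$ on the lattice $H_1(G,\Z)\cong\Z^n$, and the polarization computation is legitimate only when a shortest representative of $\gamma_i\mp\gamma_j$ really is the symmetric-difference cycle traversed once. This can fail when two cycles overlap along several disjoint segments, when three or more basis cycles share edges, or when the graph offers a shorter homologous route that bypasses part of the symmetric difference; in each such case $\ell(\gamma_i\mp\gamma_j)$ is strictly smaller than the naive combination, and the number produced by polarization is not the true overlap. Resolving this is the combinatorial heart of Chapter 9: I would argue that, after passing to a sufficiently rich family of classes (the pairwise differences of the basis cycles together with their partial sums), every individual cycle length and every pairwise overlap length can be isolated unambiguously from the full collection of values $\{\ell(\gamma):\gamma\in H_1(G,\Z)\}$, even in the presence of multiple or shared overlaps.

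Once all cycle lengths and all overlap lengths have been recovered, they assemble into the Gram matrix $\bigl(\langle\gamma_i,\gamma_j\rangle\bigr)$ relative to the chosen integral basis. This matrix fixes both the inner product on $H_1(G,\R)$ and the position of the lattice $H_1(G,\Z)$ inside it, hence determines the flat torus $Alb(G)$ up to Riemannian isometry, which completes the proof.
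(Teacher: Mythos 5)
Your overall strategy --- reduce to the Gram matrix of an integral basis of $H_1(G,\Z)$ and fill in its entries from the length function of Theorem \ref{Bloch_H1} --- is exactly the paper's, but the two steps you leave open are precisely where the paper's proof does its work, and one of them rests on a false premise. The heart of your argument is the claim that ``every individual cycle length and every pairwise overlap length can be isolated unambiguously'' from the values $\{\ell(\gamma)\}$ by passing to a richer family of classes; this is asserted, never argued, and you yourself list cases where your one-sided polarization $\langle\gamma_i,\gamma_j\rangle=\tfrac12\bigl(\ell(\gamma_i)+\ell(\gamma_j)-\ell(\gamma_i\mp\gamma_j)\bigr)$ returns the wrong number. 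Worse, your premise that $\langle\gamma_i,\gamma_j\rangle=\pm\,\mathrm{length}(C_i\cap C_j)$ with a single global sign is incorrect: two oriented cycles can simultaneously have edges of positive and of negative overlap (the paper notes this explicitly after Definition \ref{overlap}), in which case no choice of sign for $\gamma_j$ makes ``the common edges cancel,'' and the Gram entry is the \emph{difference} between the total length of positive-overlap edges and that of negative-overlap edges (Lemma \ref{inner_product}) --- a quantity that no single polarization of the form you propose computes, so even a successful isolation of ``the overlap length'' would not suffice.

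The paper closes this gap with a symmetric formula that you should compare against your one-sided one: it sets $2\langle v_i,v_j\rangle := l(\mu_i+\mu_j)-l(|\mu_i-\mu_j|)$. In $l(\mu_i+\mu_j)$ the positive-overlap edges are counted twice and the edges lying in exactly one cycle once; in $l(|\mu_i-\mu_j|)$ the negative-overlap edges are counted twice and the edges in exactly one cycle once. Subtracting, the unknown ``one cycle only'' contribution cancels between the two terms, leaving twice the signed overlap --- so no sign ever needs to be chosen, no individual overlap length needs to be isolated, and the mixed-overlap case is handled automatically; when the cycles share no edges one gets $l(\mu_i+\mu_j)=l(|\mu_i-\mu_j|)$ and hence orthogonality, even when the cycles are disjoint and connector paths inflate both lengths (equally). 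The other step you gloss over --- producing, from the abstract length function alone, a generating set whose minimal representatives are cycles --- is supplied in the paper by Lemma \ref{recognize_cycle}, which characterizes such frequencies purely length-theoretically ($\mu$ admits no decomposition $\mu=|\kappa\pm\kappa'|$ with $l(\kappa)+l(\kappa')\le l(\mu)$), together with Lemma \ref{basis_of_cycles} guaranteeing existence; your appeal to ``fundamental cycles of a spanning tree'' presumes access to the graph, which is what is being reconstructed. To your credit, the failure modes you flag (multiple disjoint overlap segments, shorter homologous bypasses) are real subtleties of this length function; but a correct proof must avoid needing the quantities they corrupt, as the paper's cancellation trick does, rather than promise to disentangle them.
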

Note that the spectrum of a single Schr\"odinger operator does not determine the  
Albanese torus, there are examples of isospectral quantum graphs with different Albanese tori, \cite{vonBelow99}. If the quantum graph is 
equilateral the Albanese torus also determines the complexity of the graph by a theorem in \cite{KotaniSunada00}.
\begin{theorem}
The Bloch spectrum determines the block structure of a quantum graph (see Definition \ref{define_block}). 
\end{theorem}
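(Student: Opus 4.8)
The plan is to reconstruct the block decomposition purely from the homological data that the Bloch spectrum supplies: by Theorem~\ref{Bloch_H1} together with the results of Chapter~9, we know the length $\ell(a)$ of a shortest representative of every class $a\in H_1(G,\Z)$ and the overlap length $O(a,b)$ (the total length of edges shared by shortest representatives) of every pair of classes. The structural fact driving everything is that each cycle of $G$ lies entirely inside a single block: two distinct blocks meet in at most one cut vertex and therefore share no edges. Hence the cycle lattice splits as an edge-disjoint direct sum $H_1(G,\Z)=\bigoplus_B H_1(B,\Z)$ over the blocks $B$, and a class supported on one block has a shortest representative inside that block, since any excursion through a cut vertex into a neighbouring block only adds length without changing the homology class. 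In particular the overlap length of two cycles lying in different blocks is zero.

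First I would isolate the two directions needed to read off blocks from overlaps. For the cross-block direction, cycles in distinct blocks are edge-disjoint, so $O=0$. For the within-block direction I would invoke the standard fact that in a $2$-connected graph any two edges lie on a common simple cycle; consequently the edges of a single block $B$ cannot be partitioned into two nonempty families separated by cycles, so the cycles of $B$ are \emph{overlap-connected}: any two of them are joined by a chain of cycles in $B$ with consecutive positive overlaps.

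Next I would define, on the set of homology classes represented by simple cycles, the relation $a\sim b$ meaning that $a$ and $b$ are joined by a chain $a=c_0,\dots,c_k=b$ of such classes with $O(c_i,c_{i+1})>0$. Because a simple cycle is confined to one block, no class in this set has support in two different blocks, so every cross-block overlap vanishes and $\sim$ creates no link between blocks; combined with the within-block overlap-connectivity above, the $\sim$-classes are exactly the blocks that carry cycles. Restricting the functions $\ell$ and $O$ to each $\sim$-class then recovers the sublattice $H_1(B,\Z)$ with its induced metric, that is, the Albanese torus of each cyclic block, and hence the block structure of $G$ in the sense of Definition~\ref{define_block}.

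The hard part will be ensuring that this combinatorial partition can actually be extracted from the Bloch-spectral data, which returns $\ell$ and $O$ only for shortest representatives, whereas the argument above wants to work with simple cycles inside a fixed block. I must therefore verify that the classes whose shortest representatives are simple cycles are recognizable from $\ell$ and $O$ alone, and that they both generate each $H_1(B,\Z)$ and witness within-block connectivity. The subtle obstruction is that a multi-block class has a disconnected shortest representative and can overlap cycles in several blocks simultaneously, which would spuriously fuse distinct blocks; the proof must exclude such classes, for instance by characterizing single-block classes through the additivity $\ell(a')+\ell(a'')=\ell(a'+a'')$, which forces edge-disjoint splittings and thereby detects every genuine cross-block decomposition while never splitting an irreducible single-block class. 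Once these realizability and irreducibility points are settled, the decomposition into blocks follows at once from the overlap-connectivity dichotomy.
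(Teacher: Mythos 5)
Your first half runs parallel to the paper: you restrict attention to homology classes whose minimal representatives are cycles (the paper does this via Lemma \ref{recognize_cycle}, and your proposed additivity test $\ell(a')+\ell(a'')=\ell(a'+a'')$ is essentially that lemma), you observe that cycles are confined to single blocks, and you group generators into blocks by edge-sharing, which is detectable from the lengths $l(\mu_i\pm\mu_j)$ as in Remark \ref{check_overlap_sign}. Your ``overlap-connected'' chain argument inside a $2$-connected block is a slightly roundabout way of saying what follows directly from the paper's Definition \ref{define_block}: a block \emph{is} by definition a maximal class of cycles under the equivalence generated by sharing an edge, so the partition of a cycle basis into blocks comes for free once edge-sharing is detectable.

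The genuine gap is that you stop there. The block structure in the sense of Definition \ref{define_block} is not merely the list of blocks with their internal homological data; it is the tree-like arrangement of the blocks --- which blocks are attached to which, whether two attachments happen at the same or at different cut vertices of a given block, and how blocks are joined by edges or paths not lying in any cycle. Your data (the partition of cycle classes plus the restricted functions $\ell$ and $O$) cannot distinguish, for example, three blocks arranged in a path $B_1-B_2-B_3$ from three blocks all meeting at a single cut vertex, nor attachments at one cut vertex of $B_2$ from attachments at two different ones. This is exactly the second half of the paper's proof, which you omit entirely: the paper computes inter-block distances
\begin{equation*}
d(B_1,B_2):=\frac{1}{2}\min_{i,i'}\left( l(\mu^1_i + \mu^2_{i'})-l(\mu^1_i)-l(\mu^2_{i'}) \right),
\end{equation*}
uses failures of the triangle inequality $d(B_2,B_3)>d(B_1,B_2)+d(B_1,B_3)$ to identify inner blocks and to sort the remaining blocks by the cut vertex at which their connecting path attaches, cuts the structure into pieces in which every block is a leaf, and then applies the quantum-tree reconstruction Lemma \ref{tree_from_leaves} to recover the tree shape (including the connecting edges that belong to no block). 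Without some substitute for this distance-and-tree-reconstruction step, your argument proves only that the Bloch spectrum determines the set of blocks and their Albanese tori, not the block structure.
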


\begin{theorem}
The Bloch spectrum determines whether or not a graph is planar. 
\end{theorem}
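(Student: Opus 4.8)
The plan is to reduce planarity to a combinatorial condition on the cycle space and then verify that condition using only the metric homological data that the Bloch spectrum has already been shown to determine. The combinatorial input is MacLane's planarity criterion: a graph is planar if and only if its cycle space admits a \emph{2-basis}, a basis of $H_1$ in which every edge lies on at most two basis cycles. For a $2$-connected graph this is witnessed by the boundaries of the faces of a planar embedding, which, suitably oriented, cover each edge \emph{exactly} twice and sum to zero in homology. Since planarity is a blockwise property and the Bloch spectrum determines the block structure (previous theorem), and since $H_1(G,\Z)$ together with its overlap data splits as a direct sum over the blocks (distinct blocks share no edges), it suffices to decide planarity for a single $2$-connected block. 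Both the shortest-representative lengths of Theorem~\ref{Bloch_H1} and the cycle and overlap data restrict to each block.

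By the results of Chapter~9 we may assume that for each cycle we know the length $\ell(\cdot)$ of its shortest representative and for each pair of cycles the length $o(\cdot,\cdot)$ of their overlap, i.e.\ the total length of shared edges; we also know the total edge length $\mathcal{L}$ and $n=\dim H_1(G,\R)$ from the ordinary spectrum. I would then search over $(n+1)$-tuples of simple cycles $F_0,\dots,F_n$ that span $H_1$ and satisfy $\sum_i F_i=0$, testing each against the two scalar identities $\sum_i \ell(F_i)=2\mathcal{L}$ and $\sum_{i<j} o(F_i,F_j)=\mathcal{L}$. Because every face boundary of a $2$-connected planar graph is a simple cycle of length at most $\mathcal{L}$, only finitely many cycles need be considered, so this is a finite search.

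The key point is that these two identities are equivalent to the family being an exact double cover. Writing $m_e$ for the number of the $F_i$ containing the edge $e$, one has $\sum_i \ell(F_i)=\sum_e m_e\,\ell(e)$ and $\sum_{i<j} o(F_i,F_j)=\sum_e \binom{m_e}{2}\ell(e)$. Treating $p_e=\ell(e)/\mathcal{L}$ as a probability distribution on the edges, the two identities read $\mathbb{E}[m]=2$ and $\mathbb{E}[m(m-1)]=2$, whence $\mathbb{E}[m^2]=4$ and $\mathrm{Var}(m)=0$; as the $m_e$ are nonnegative integers and every edge has positive length, this forces $m_e=2$ for every edge. Thus a passing tuple covers each edge exactly twice, so dropping any one $F_i$ yields a basis of $H_1$ in which each edge lies on at most two cycles, a 2-basis; conversely the faces of a planar embedding furnish such a tuple. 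By MacLane the block is planar if and only if the search succeeds, which proves the theorem.

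The main obstacle is ensuring that the reconstructed data is \emph{labelled} finely enough to run this test rather than furnishing only an anonymous list of lengths: one must know $o(F_i,F_j)$ as a function of the specified pair and must be able to check that a chosen tuple spans $H_1$ and sums to zero, which is where the full strength of the Albanese-lattice and Chapter~9 reconstruction is used. A secondary difficulty is the bookkeeping needed to reconcile the oriented, integral double cover with MacLane's unoriented $\F_2$ 2-basis and to accommodate loops and parallel edges under the paper's conventions (a loop being counted as two half-edges). The conceptual heart, however, is the observation that the single metric datum of pairwise overlaps already pins down the edge-incidence multiplicities of a candidate face system, reducing the topological question of planarity to the two verifiable length identities.
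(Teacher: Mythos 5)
Your overall skeleton---MacLane's criterion plus a finite search over cycle systems certified by metric data---parallels the paper's, but your certificate relies on data that the Bloch spectrum has not been shown to determine, and this is a genuine gap. You assume that Chapter~9 supplies the \emph{unsigned} overlap $o(F_i,F_j)$, the total length of shared edges. What the paper actually extracts (Theorem~\ref{Bloch_H1}, Theorem~\ref{determine_Albanese}) is the minimal-representative length $l$ of each homology class, and from it only the \emph{signed} overlap $\frac{1}{2}\bigl(l(\mu_i+\mu_j)-l(|\mu_i-\mu_j|)\bigr)$, i.e.\ length of positive overlap minus length of negative overlap; Remark~\ref{check_overlap_sign} detects only the existence of overlap of a given sign. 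The natural attempt to recover $o=P+N$ from lengths, via $2o=2\ell(F_i)+2\ell(F_j)-l(\mu_i+\mu_j)-l(|\mu_i-\mu_j|)$, fails because a minimal periodic orbit in the class $\mu_i\pm\mu_j$ can be strictly longer than the $\ell^1$-norm of the class: when cancellation of shared edges disconnects the supporting edge set, any single periodic orbit must traverse connecting edges twice (exactly the phenomenon used in the proof of Lemma~\ref{recognize_cycle}). This already bites in the good case: two adjacent faces of a planar embedding sharing $k\ge 2$ edges have symmetric difference decomposing into $k$ cycles that may be vertex-disjoint (the situation of Lemma~\ref{decomposition} and Figure~\ref{determine_edge_count}), so $l(\mu_i+\mu_j)$ overshoots, your computed $\sum_{i<j}o(F_i,F_j)$ undershoots $\mathcal{L}$, and a genuine face tuple can fail your test. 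In the other direction, the paper's remark after Lemma~\ref{decomposition} (the $K_{3,3}$ example, two cycles sharing three edges whose sum is homologous to a single cycle) shows shared-edge data is in general \emph{not} recoverable from $l$ before planarity is known; note that Lemma~\ref{decomposition}, which does recover shared-edge counts, is proved only for $2$-connected \emph{planar} graphs and comes after the planarity theorem. So both directions of your test are unsound as stated.

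The repair essentially forces you back to the paper's argument. If you restrict the search to tuples of \emph{oriented} cycles with pairwise no positive overlap---testable via $l(|\gamma_i+\gamma_j|)<\ell(\gamma_i)+\ell(\gamma_j)$ as in Remark~\ref{check_overlap_sign}---then your second identity becomes automatic: expanding $0=\|\sum_i F_i\|^2$ in the inner product of Lemma~\ref{inner_product} gives $\sum_{i<j}\langle F_i,F_j\rangle=-\frac{1}{2}\sum_i\ell(F_i)$, and with all overlaps negative this equals $-\sum_{i<j}o(F_i,F_j)$. Moreover the double-cover bookkeeping is then superfluous: pairwise absence of positive overlap already forces each edge to lie on at most two of the cycles (among three oriented cycles through one edge, two must traverse it in the same direction), which is precisely sparseness. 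That is the paper's proof: it enumerates the finitely many homology bases consisting of cycles (recognized via Lemma~\ref{recognize_cycle}), ranges over the finitely many orientation choices, tests for positive overlap by the length inequality, and invokes Corollary~\ref{simple=planar}; no sum-to-zero face tuple, no outer face, and no unsigned overlap lengths are needed.
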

Planarity is not determined by the spectrum of a single Schr\"odinger operator, \cite{vonBelow99}.
The information about the 
homology we read out from the Bloch spectrum allows us to construct a geometric dual of a planar quantum graph. 
We use it to recover the underlying combinatorial graph of a planar $3$-connected quantum graph from the Bloch spectrum.

In Chapter 10 we show that if we 
know the underlying combinatorial graph and it is $3$-connected and planar then the Bloch spectrum determines 
the length of all edges in the quantum graph, so we can recover the full quantum graph. Together with results from the previous 
chapter this implies:
\begin{theorem}
Planar $3$-connected graphs can be identified and completely reconstructed from their Bloch spectrum.
\end{theorem}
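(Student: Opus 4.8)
The plan is to combine the two preceding structural results into a single reconstruction procedure. The final theorem asserts that a planar $3$-connected quantum graph is completely determined by its Bloch spectrum, and the work splits naturally into recovering the combinatorial graph and then recovering the edge lengths. For the combinatorial part I would invoke the result announced just above, namely that for a planar $3$-connected quantum graph the Bloch spectrum determines the underlying combinatorial graph. The idea behind that step is that by Theorem \ref{Bloch_H1} the Bloch spectrum yields the length of a shortest representative of every class in $H_1(G,\Z)$, and hence (via Chapter 9) the Albanese torus together with the cycle-overlap data and the geometric dual of the planar graph. Whitney's theorem guarantees that a $3$-connected planar graph has an essentially unique embedding in the sphere, so its geometric dual is well-defined and the primal graph is recovered unambiguously from the dual. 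Thus the combinatorial isomorphism type of $G$ is fixed.

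Once the combinatorial graph is known and known to be $3$-connected and planar, I would apply the result of Chapter 10, which says precisely that under these hypotheses the Bloch spectrum determines the length of every edge. With the edge lengths in hand together with the combinatorial structure, the quantum graph is fully reconstructed, since a quantum graph is nothing more than a combinatorial graph together with an assignment of positive edge lengths. So the proof is essentially an assembly: first recover the graph combinatorially, then feed that graph back into the edge-length recovery procedure, and conclude that both the combinatorial and metric data are determined.

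The main obstacle is not in this final assembly, which is short, but in the two results being cited, and in particular in making the edge-length recovery of Chapter 10 genuinely rigorous. The subtle point is that recovering individual edge lengths requires more than the homological length data that Theorem \ref{Bloch_H1} supplies: the lengths of shortest cycle representatives and their overlaps constrain the edges lying on cycles, but one must argue that in a $3$-connected planar graph every edge is pinned down by this cycle data, using the rigidity of the dual structure to convert overlap lengths into individual edge lengths. I would therefore expect the hardest step to be verifying that the linear system relating cycle and overlap lengths to edge lengths has a unique solution precisely when the graph is $3$-connected and planar, which is exactly where the structural hypotheses are used.

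Finally, I would remark that both hypotheses are essential and cannot be dropped: without $3$-connectivity Whitney's uniqueness of the planar embedding fails, so the geometric dual and hence the combinatorial reconstruction become ambiguous, and without planarity there is no dual to exploit at all. This explains why the theorem is stated for exactly this class, and it is consistent with the earlier remarks in the introduction that planarity itself is only detectable from the full Bloch spectrum and not from the spectrum of a single Schr\"odinger operator.
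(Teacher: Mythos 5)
Your overall assembly is the same as the paper's: first recover the combinatorial graph (the paper's Corollary \ref{determine_3con_combo}, which rests on the dual construction of Theorem \ref{abstract_dual} and the uniqueness of duals of $3$-connected planar graphs --- the same fact you invoke via Whitney's embedding theorem), then recover the edge lengths. But the second half of your argument has a genuine gap: the ``result of Chapter 10'' you cite \emph{is} the statement to be proven, so deferring to it is circular, and the substitute you sketch --- proving that a ``linear system relating cycle and overlap lengths to edge lengths has a unique solution precisely when the graph is $3$-connected and planar'' --- is not what is needed and misplaces the hypotheses. The actual mechanism is the paper's Lemma \ref{determine_edge_length}, which is local and explicit: for an edge $e$ of a $3$-connected graph there are three internally disjoint paths between its endpoints, hence two cycles $c_1, c_2$ containing $e$ and otherwise disjoint; then $c_3 := \left(c_1 \setminus \{e\}\right) \cup \left(-\left(c_2 \setminus \{e\}\right)\right)$ is again a cycle and $2L(e) = L(c_1) + L(c_2) - L(c_3)$. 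No global uniqueness argument is required, and planarity plays no role in this step --- it is used earlier, to detect planarity and to build the dual, not to pin down edge lengths.

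There is a second, subtler gap that the paper addresses in the remarks preceding Theorem \ref{edge_length} and that your proposal passes over: Theorem \ref{Bloch_H1} delivers lengths attached to elements of the \emph{abstract} torus $H^1(G,\R)/H^1(G,\Z)$, whereas Lemma \ref{determine_edge_length} needs the lengths of concrete cycles in the reconstructed combinatorial graph. One must therefore match frequencies to cycles. The paper does this by carrying the labels through the dual construction: the vertices of the dual built in Theorem \ref{abstract_dual} correspond to the generating frequencies $\mu_1, \ldots, \mu_n$ together with their lengths, and since the graph is $3$-connected the dual of the dual is isomorphic to $G$, so the frequencies and their lengths transfer to a generating set of cycles of $G$ (and thence to all cycles). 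Your phrase ``rigidity of the dual structure'' gestures in this direction, but you deploy it toward solving for edge lengths rather than toward this identification step, which is where the dual construction is actually doing the work.
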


In the last chapter we will treat disconnected graphs and show that our results still hold in this case.

\chapter{Combinatorial graph theory}

This chapter collects various basic facts about combinatorial graphs that will be required later. The material is mostly taken from 
\cite{Diestel05}, which provides an excellent introduction to the area. 

\begin{definition}
 A {\bf directed combinatorial graph} $G$ is a finite set of vertices $V$, a finite set of edges $E$ or $E(G)$, and a function 
$E \ra V\times V$ that associates each edge with its initial and terminal vertices.

A {\bf combinatorial graph} is a directed combinatorial graph where we have `forgotten the directions' of the edges, that is, we consider 
the two edges $(v,v')$ and $(v',v)$ as equivalent.
\end{definition}

This definition of a combinatorial graph allows for loops; that is, edges that start and end at the same vertex, and multiple edges; 
that is, several edges with the same start and end vertex. 

\begin{definition}
A combinatorial graph without loops and multiple edges is called a {\bf simple graph}.
\end{definition}

The {\bf degree of a vertex} is the number of edges that are incident to it. As both ends of a loop are attached to the same vertex, 
adding a loop increases the degree of the vertex by $2$.

\begin{remark}
We will assume throughout that our graphs are connected and in particular do not have isolated vertices, we will treat disconnected 
graphs in Chapter 11. 

 We also assume that there are no vertices of degree $2$. Once we pass to 
quantum graphs, two edges connected 
by a vertex of degree $2$ with Kirchhoff boundary condition behave exactly the same way as a single longer edge does.
\end{remark}

We need various different notions of paths on a graph.
\begin{definition}
Let $G$ be a directed combinatorial graph.
\begin{enumerate}
 \item A {\bf path} in a graph is a finite alternating sequence of edges and vertices starting and ending with a vertex such that 
every edge sits in between its two end vertices.
\item A {\bf closed walk} in a graph is an alternating cyclic sequence of edges and vertices such that every edge sits in between 
its two end vertices.
\item An {\bf oriented cycle} in a graph is a closed walk that does not repeat any edges or vertices. 
\item A {\bf cycle} is a subgraph that consists of a set of edges and vertices that forms an oriented cycle.
\end{enumerate}

\end{definition}

Whenever we use the word cycle we mean it in this graph theoretical sense and not in a homological sense.
A graph without any cycles is called a {\bf tree}.

\begin{definition}
\label{overlap}
Let $\gamma_1$ and $\gamma_2$ be two oriented cycles  in a graph. 
We say they have {\bf edges of positive overlap} if they have an edge in common and pass through it in the same direction. 
We say they have {\bf edges of negative overlap} if they have an edge in common and pass through it in opposite directions.
\end{definition}

Note that two oriented cycles can have both edges of positive and negative overlap.

\begin{lemma}
\label{basis_of_cycles}
 Every graph admits a basis of its homology that consists of oriented cycles. 
\end{lemma}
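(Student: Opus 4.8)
The plan is to produce the basis explicitly from a spanning tree, the standard \emph{fundamental cycle} construction, and then to check both that the resulting elements are genuine oriented cycles and that they form a basis of $H_1(G,\Z)$ over $\Z$. Since by assumption $G$ is connected, I would first fix a spanning tree $T \subseteq G$. As $G$ is a one-dimensional object it has no $2$-cells, so its first homology is simply the cycle space $H_1(G,\Z) = \ker\bigl(\partial\colon C_1(G,\Z)\to C_0(G,\Z)\bigr)$, a free abelian group; the tree $T$ has $|V|-1$ edges, so there are exactly $m = |E|-|V|+1$ edges $e_1,\dots,e_m$ of $G$ not lying in $T$, which (via $\chi(G)=|V|-|E|=1-\operatorname{rank}H_1(G,\Z)$) is also the rank of $H_1(G,\Z)$.

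Next I would attach a cycle to each non-tree edge. For $e_j$ with endpoints $u_j,w_j$, there is a \emph{unique} simple path $P_j$ in $T$ from $w_j$ to $u_j$, because $T$ is a tree. Define the fundamental cycle $C_j$ to be $e_j$ together with $P_j$, oriented so that it closes up. Since $P_j$ is a simple tree path and $e_j$ is a single additional edge joining its endpoints, $C_j$ repeats neither a vertex nor an edge, so it is an oriented cycle in the sense of the earlier definition (a non-tree loop is the degenerate case where $P_j$ is trivial and $C_j=e_j$). These $C_1,\dots,C_m$ are the candidate basis elements.

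Linear independence is the easy half, and the driving observation is that $e_j$ occurs in $C_j$ with coefficient $\pm 1$ but in no other $C_k$, since every fundamental cycle uses exactly one non-tree edge plus tree edges. Hence in any relation $\sum_j a_j C_j = 0$, reading off the coefficient of $e_j$ forces $a_j = 0$ for each $j$, so the $C_j$ are independent over $\Z$.

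The step that needs care, and which I expect to be the only real obstacle, is spanning over $\Z$ rather than merely over $\Q$. Given an arbitrary cycle $z = \sum_e c_e\, e \in \ker\partial$, I would subtract off the fundamental cycles to kill the non-tree part: set $z' = z - \sum_j c_{e_j} C_j$. By the witness-edge property above, $z'$ has coefficient $0$ on every non-tree edge, so its support lies inside $T$ and is therefore a forest, while $z'$ is still a cycle (a $\Z$-combination of cycles). The key subclaim is that a cycle supported on a forest must vanish: if the support were nonempty it would contain a vertex $\ell$ of degree one within the support, and evaluating $\partial z' = 0$ at $\ell$ forces the coefficient of the unique support-edge at $\ell$ to be zero, contradicting that it lies in the support. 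Hence $z' = 0$ and $z = \sum_j c_{e_j} C_j$ is an integer combination of the $C_j$. Together with independence, this shows $C_1,\dots,C_m$ form a $\Z$-basis of $H_1(G,\Z)$ consisting of oriented cycles, completing the proof.
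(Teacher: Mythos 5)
Your proof is correct and takes exactly the paper's approach: fix a spanning tree and take the fundamental cycle of each non-tree edge. The paper states this construction in three sentences without verification, while you supply the routine checks (the witness-edge argument for independence and the forest-support argument for spanning over $\Z$), so the two arguments are essentially identical.
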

\begin{proof}
 Pick a spanning tree of the graph. Associate to each edge of $G$ not in the spanning tree the oriented cycle that consists of 
this edge and the (unique) path in the spanning tree that connects its end points. This collection of oriented cycles is a 
basis of the homology. 
\end{proof}

\begin{definition}
 We call a graph with no leaves, that is, vertices of degree $1$, a {\bf leafless} graph. 
\end{definition}

\section{Connectivity and the block structure of a graph}

The spectrum of the Laplacian not only determines the number of connected components of a combinatorial or quantum graph 
but it also contains more subtle information about the 
connectivity. We will show later that the Bloch spectrum determines the block structure, which provides a broad 
view of the structure of a graph. Here, we will introduce some of the language related to connectivity of graphs as well 
as the definition of the block structure. 

\begin{definition}
\label{k-connected}
 A graph $G$ is called {\bf $k$-connected} if any two vertices $v,v' \in V$ can be connected by $k$ disjoint paths. The paths are 
called disjoint if they do not share any edges or vertices (apart from $v$ and $v'$).
\end{definition}

\begin{definition}
 A vertex $v$ in $G$ is called a {\bf cut vertex} if $G \setminus \{v\}$ is disconnected.
\end{definition}

\begin{definition}
Consider the set of all cycles in the graph. Declare two cycles equivalent if they have at least one edge in common. This 
generates an equivalence relation. 
A {\bf block} is the union of all cycles in one equivalence class.
\end{definition}

Note that two cycles can be equivalent even if they do not share an edge.

\begin{remark}
This is a slight deviation from the standard definition. It is changed to 
allow graphs with loops and multiple edges. 
Edges that are not part of any cycle are not part of any block in our 
definition. Usually these edges are counted as blocks, too.
\end{remark}

Note that all loops in the graph are blocks; all other blocks are $2$-connected.

\begin{definition}
\label{define_block}
We define the {\bf block structure} of a graph as follows. Each block in the graph is replaced by a small circle that we call a fat vertex. 
The cut vertices contained in this block correspond to the different attaching points on the fat vertex. For loops we interpret 
their vertex as the cut vertex where they are attached to the rest of the graph. 

All other blocks or 
remaining edges sharing one of the cut vertices with the original block are connected at the respective attaching point on the fat vertex. 

It does not matter how the different attaching points are arranged around the fat vertex.
We explicitly allow several fat vertices to be directly connected to each other without an edge in 
between.
\end{definition}

\begin{remark}
Again this is a nonstandard definition. Our definition contains the same information about the graph as the standard 
one modulo the addition of loops and multiple edges.
\end{remark}

\begin{example}
Figure \ref{qgraph_blockstructure} shows an example of a graph and its block structure. For simplicity of recognition all 
blocks in the graphs are either loops or copies of the complete graph on $4$ vertices, $K_4$.

\begin{figure}[ht]
\centering
\scalebox{0.8}{\includegraphics{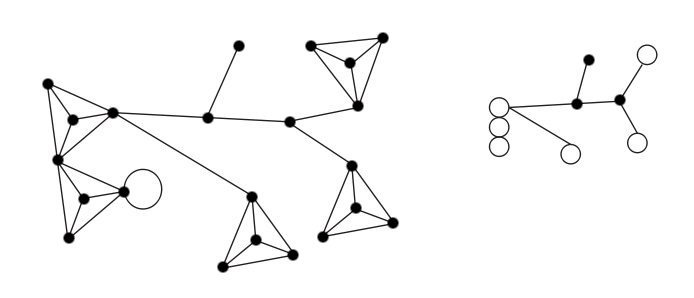}}
\caption{A quantum graph and its block structure}
\label{qgraph_blockstructure}
\end{figure}
\end{example}

\begin{remark}
\label{cycle_single_block}
Any cycle in the graph is confined to a single block.
Thus the vertices and edges in the block structure never form a cycle and the block structure has a tree-like shape.
\end{remark}

We will phrase the next two lemmata in the context of quantum graphs as we will need them later on. 

\begin{lemma}
 \label{tree_from_leaves}
Let $G$ be a quantum tree with no vertices of degree $2$. Then the table of distances of all leaves 
determines both the combinatorial tree underlying $G$ and all individual edge lengths.
\end{lemma}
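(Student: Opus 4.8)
The plan is to induct on the number $k$ of leaves, with the single edge as the base case and the classical \emph{three-point (median) formula} as the workhorse. In any metric tree the three paths joining a triple of points $x,y,z$ meet in a unique median $m$, and a direct path computation gives $d(x,m)=\tfrac12\bigl(d(x,y)+d(x,z)-d(y,z)\bigr)$, together with the symmetric expressions for $d(y,m)$ and $d(z,m)$. Thus, although the table records only distances between leaves, it already determines the distance from any leaf to the branch point of any triple of leaves. In particular the distance from a leaf $l_k$ to the point where its pendant path first meets the rest of the tree is $h_k=\min_{\{m,n\}}\tfrac12\bigl(d(l_k,l_m)+d(l_k,l_n)-d(l_m,l_n)\bigr)$, the minimum being taken over pairs of leaves distinct from $l_k$.

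For the base case $k=2$ the absence of degree-$2$ vertices forces the tree to be a single edge (an internal vertex would have degree $\ge 3$ and hence force at least three leaves), whose length is the one recorded distance. For the inductive step I remove one leaf. Fix a leaf $l_k$ and delete it together with its pendant edge; if its former neighbor $v$ now has degree $2$, suppress $v$ by merging its two remaining edges into a single edge of the summed length. The result $G'$ is again a quantum tree with no degree-$2$ vertices, it has exactly $k-1$ leaves, and — since neither deleting a leaf nor suppressing a degree-$2$ vertex changes distances between the surviving leaves — its leaf-distance table is the original one restricted to $\{l_m : m\ne k\}$. By the induction hypothesis $G'$ is determined, combinatorially and metrically, by that restricted table. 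It then remains to reattach $l_k$: compute $h_k$ as above to get the pendant length, locate the attaching point $p$ inside the reconstructed $G'$ by the distances $d(p,l_m)=d(l_k,l_m)-h_k$, subdivide the relevant edge of $G'$ at $p$ when $p$ is not already a vertex, and hang a pendant edge of length $h_k$ from $p$. Every quantity used is read off from the table alone, so the reconstruction is forced at each stage; running the same deletion on two no-degree-$2$ realizations with equal leaf-distance tables yields, by induction, the same $G'$ and the same reattachment, which gives uniqueness.

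The main obstacle is the bookkeeping around the reattachment and the accompanying uniqueness claim, rather than any single hard estimate. One has to check that the point $p$ singled out by the equations $d(p,l_m)=d(l_k,l_m)-h_k$ is well defined and consistent across all $m$ — this is precisely where the four-point condition satisfied by a tree metric enters — and that $p$ can never land on an existing leaf (it cannot, since a leaf lies on no path between two other leaves), so that subdividing at $p$ produces a vertex of degree $3$ and never re-creates a degree-$2$ vertex. One must also verify that the degree-$2$ suppression carried out when $l_k$ was deleted is correctly inverted by this subdivision, so that the reconstructed object is genuinely the original $G$ with its original edge lengths. Once these compatibility points are settled the induction closes, and both the combinatorial tree and all individual edge lengths are recovered from the table of leaf distances.
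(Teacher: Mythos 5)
Your proof is correct and takes essentially the same approach as the paper: an induction on the number of leaves driven by the three-point median formula $d(x,m)=\tfrac12\bigl(d(x,y)+d(x,z)-d(y,z)\bigr)$, which is exactly how the paper computes branch points and pendant lengths. The only cosmetic difference is direction — the paper builds the tree up by attaching one leaf at a time (locating the attachment point by an iterated branch-point search along paths), while you delete a leaf, suppress the resulting degree-$2$ vertex, and invoke the induction hypothesis before reattaching — and the compatibility points you flag (well-definedness of $p$, the attachment vertex having degree $\ge 3$ so suppression is correctly inverted) are handled soundly.
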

\begin{proof}
Given three leaves $B_i$, $B_j$ and $B_k$ the restriction of the tree to the paths between these leaves is 
shaped like a star. We will denote the length of the three branches by $l_i, l_j$ and $l_k$. The distances between the leaves determine 
the quantities $l_i+l_j$, $l_i+l_k$ and $l_j+l_k$ and thus the three individual lengths $l_i, l_j$ and $l_k$. This means that 
given a path between two leaves $B_i$ and $B_j$ and a third leaf $B_k$ we can find both the point on the path from $B_i$ to $B_j$ 
where the paths from $B_i$ and $B_j$ to $B_k$ branch away and the length of the path from this point to $B_k$. 

We will use this fact repeatedly and proceed by induction on the number of leaves. 

If there are only two leaves the tree consists of a single interval with length the distance between the two leaves. 

Suppose we already have a quantum tree 
with leaves $B_1, \ldots, B_{n-1}$. We now want to attach a new leaf $B_n$. We will first look at the leaves $B_1$ and $B_2$ and
 find the point on the path from $B_1$ to $B_2$ where the paths to $B_n$ branch away. If this point is not a 
vertex of the tree, we create a new vertex and attach the leaf $B_n$ on an edge of suitable length $l_n$. If this point is a vertex of the 
tree we know that the attachment point of $B_n$ has to lie on the subtree branching away from the path from $B_1$ to $B_2$ starting at 
that vertex. Pick a leaf on this subtree, without loss of generality $B_3$, 
and look at the path from $B_1$ to $B_3$. We can again find the point on that path where the paths to $B_n$ branch away. If this point 
is not a vertex of the tree we found the attachment point, otherwise we have reduced our search to a strictly 
smaller subtree. We will now repeat this process. As we reduce the search to a strictly smaller subtree in each step the process 
has to stop after finitely many steps. We will either end up with an attachment point on an edge or on a subtree that consists 
of a single vertex. In either case we can attach the new leaf $B_n$ on an edge of suitable length $l_n$.
\end{proof}

\begin{lemma}
\label{determine_edge_length}
 Let $G_0$ be a $3$-connected combinatorial graph and let $G$ be a quantum graph with underlying combinatorial graph $G_0$. 
Then knowing $G_0$ and the length of each cycle determines the length of each edge in $G$.
\end{lemma}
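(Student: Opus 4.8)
The plan is to recover each edge length separately, exploiting a Menger-type decomposition of every edge into three internally disjoint paths that is available in a $3$-connected graph. Fix an edge $e=uv$ of $G_0$. The key structural step is to show that deleting $e$ leaves a $2$-connected graph. Suppose for contradiction that $G_0-e$ had a cut vertex $w$. Since $G_0-u$ and $G_0-v$ are connected (as $G_0$ is $3$-connected), we have $w\neq u,v$; and since $G_0-w$ is connected, removing the single edge $e$ splits it into exactly two components $A\ni u$ and $B\ni v$, with $e$ the unique edge between them in $G_0-w$. If $\lvert A\rvert\ge 2$ then $\{w,u\}$ separates $A\setminus\{u\}$ from $B$ in $G_0$, and symmetrically $\{w,v\}$ is a $2$-cut if $\lvert B\rvert\ge 2$; while $\lvert A\rvert=\lvert B\rvert=1$ would force $G_0$ to have only three vertices. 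Each case contradicts $3$-connectivity (which requires at least four vertices), so $G_0-e$ is $2$-connected.

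Since $G_0-e$ is $2$-connected, its two endpoints $u,v$ are joined by two internally disjoint paths $P_1,P_2$ in $G_0-e$ (the cycle characterization of $2$-connectivity, as in \cite{Diestel05}), neither using $e$. Adjoining the edge $e$ itself gives three pairwise internally disjoint paths from $u$ to $v$, so
\[ C_1=e\cup P_1,\qquad C_2=e\cup P_2,\qquad C_3=P_1\cup P_2 \]
are genuine cycles of $G_0$. Because $G_0$ is known combinatorially, these three cycles can be identified, and by hypothesis their lengths $L(C_1),L(C_2),L(C_3)$ are known. Writing $l(e),l(P_1),l(P_2)$ for the total lengths involved, we have $L(C_1)=l(e)+l(P_1)$, $L(C_2)=l(e)+l(P_2)$ and $L(C_3)=l(P_1)+l(P_2)$, whence
\[ l(e)=\tfrac12\bigl(L(C_1)+L(C_2)-L(C_3)\bigr). \]
As $e$ was arbitrary, this determines every edge length.

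The main obstacle is the purely combinatorial claim that $G_0-e$ remains $2$-connected; once that is in hand, the remainder is the trivial $3\times 3$ linear system above. I would therefore present the case analysis on the component sizes $\lvert A\rvert,\lvert B\rvert$ carefully, since this is precisely where $3$-connectivity is essential and mere $2$-connectivity fails (a triangle is $2$-connected, yet its single cycle length cannot separate the three edge lengths): $3$-connectivity is exactly what guarantees a detour $P_1\cup P_2$ avoiding $e$, so that the three equations become solvable for $l(e)$. For graphs carrying multiple edges one need only note additionally that a pair of parallel edges between $u$ and $v$ is itself a cycle, which yields an even more direct relation, so that case only simplifies the argument.
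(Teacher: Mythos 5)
Your proof is correct and takes essentially the same route as the paper's: both produce two cycles through $e$ that are disjoint away from $e$ and recover the edge length from the identity $2l(e)=L(C_1)+L(C_2)-L(C_3)$ applied to those two cycles and their union minus $e$. The only difference is bookkeeping: since the paper defines $3$-connectivity (Definition \ref{k-connected}) in Menger form, as the existence of three pairwise disjoint paths between any two vertices, the two detours $P_1,P_2$ avoiding $e$ come for free (a $u$--$v$ path containing the edge $e$ must be $e$ itself, so at most one of the three paths uses it), making your careful lemma that $G_0-e$ remains $2$-connected correct but superfluous in this setting.
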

\begin{proof}
Given an edge $e$ there are at least 3 disjoint paths that connect its end vertices as $G_0$ is $3$-connected. Thus there are two cycles 
in $G_0$ that share the edge $e$ and its end vertices but otherwise are disjoint. Denote these two cycles 
by $c_1$ and $c_2$. Denote the closed walk $c_1 \setminus \{e\} \cup (-c_2 \setminus \{e\})$ by $c_3$. Since $c_1$ and $c_2$ are 
disjoint away from $e$ the closed walk $c_3$ is a cycle. The length of $e$ is given by $2L(e)=L(c_1)+L(c_2)-L(c_3)$ and thus 
determined by the lengths of the cycles.
\end{proof}

\section{Planarity of graphs}

A combinatorial graph is called planar if it admits an embedding into $\R^2$ without edge-crossings. Similarly, a quantum 
graph is planar if the underlying combinatorial graph is planar. In other words, planarity is independent of the edge lengths 
we assigned or of the existence of an isometric embedding. At first glance, planarity seems to be unrelated to the spectrum, and 
indeed, it is not determined by the spectrum of the Laplacian. However, once we consider the entire Bloch spectrum we will 
show that one can determine whether a quantum graph is planar or not.

Given an embedding into $\R^2$ of a planar graph the faces of the embedding are the connected components of $\R^2 \setminus G$.
The edge space of a graph is the $\F_2$-vector space of functions $f: E \ra \F_2$.
The cycle space $\mathcal{C}(G)$ is the subspace generated by all functions that are indicator functions of a cycle in the graph.

\pagebreak

\begin{theorem}
MacLane (1937) \cite{Diestel05}, p.101\\
A graph is planar if and only if its cycle space has a sparse basis.
Sparse means that each edge is part of at most 2 cycles in the basis.
\end{theorem}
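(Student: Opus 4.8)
The plan is to establish both implications after reducing to the $2$-connected case. Since every cycle is confined to a single block (Remark~\ref{cycle_single_block}) and distinct blocks have disjoint edge sets, the cycle space splits as a direct sum $\mathcal{C}(G)=\bigoplus_{B}\mathcal{C}(B)$ over the blocks $B$, and each element of a basis of $\mathcal{C}(G)$, being a single cycle, lies in exactly one summand. Hence a sparse basis of $G$ is precisely a union of sparse bases of the blocks, so $G$ has a sparse basis if and only if every block does; since $G$ is planar if and only if every block is planar, I may assume $G$ is $2$-connected (loops and bridges being trivially planar, with $1$-dimensional or trivial cycle space).

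For the forward direction I would fix a plane embedding and take the boundaries $C_1,\dots,C_k$ of the bounded faces. In a $2$-connected plane graph each face boundary is a genuine cycle, and each edge borders exactly two faces, at most one of which is unbounded; thus every edge lies in at most two of the $C_i$, which is exactly sparseness. The only $\F_2$-relation among all face boundaries (including the outer one) is that their total sum vanishes, because each edge occurs in exactly two of them, so the $k$ bounded-face boundaries are linearly independent; and by Euler's formula $k=|E|-|V|+1=\dim\mathcal{C}(G)$, so they form a basis.

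The substance is the reverse implication. Given a sparse basis $\{C_1,\dots,C_d\}$ I would first adjoin the cycle $C_0:=C_1+\dots+C_d$ (sum in $\F_2$). In a $2$-connected graph every edge lies on some cycle and therefore, since the basis spans, in an odd number of basis cycles; combined with sparseness this means each edge lies in exactly one or exactly two of the $C_i$. An edge lying in two of them cancels in $C_0$, while an edge lying in one survives, so in either case every edge lies in exactly two members of the enlarged family $\{C_0,C_1,\dots,C_d\}$. This family has $d+1=|E|-|V|+2$ cycles, which by Euler's formula is exactly the face count of a hypothetical plane embedding, and it exhibits each edge as the shared boundary of exactly two of them. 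I would then build a surface by gluing a disk along each $C_i$ and identifying, along every edge, the two disks meeting there; the resulting closed surface carries $G$ with the $C_i$ as its faces and has Euler characteristic $|V|-|E|+(d+1)=2$, so if it is a connected closed surface it must be the sphere, yielding a planar embedding.

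The main obstacle is precisely this last step: the edge-gluing produces a bona fide closed $2$-manifold only if, around every vertex, the incident face-corners close up into a single cyclic sequence, for otherwise the surface has a pinch point and the Euler-characteristic argument collapses. Verifying this local condition is the heart of the theorem. The standard route is to reduce further to $3$-connected graphs, splitting along $2$-separations, under which both the sparse-basis property and planarity are preserved by routine bookkeeping; $3$-connectivity then forces the faces meeting at each vertex to form a single cycle, which is the content of Tutte's theory of peripheral (non-separating induced) cycles. As a consistency check against Kuratowski's theorem, note that neither $K_5$ nor $K_{3,3}$ admits such a doubly-covering family: the enlarged family would require average cycle length $20/7<3$ for $K_5$ and $18/5<4$ for $K_{3,3}$, both impossible since cycle lengths are at least $3$, respectively $4$. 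I expect the $2$-separation bookkeeping and the verification of the local vertex condition, rather than any of the counting, to be where the genuine difficulty lies.
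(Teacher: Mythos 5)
First, a point of comparison: the paper does not prove this statement at all --- it is quoted from \cite{Diestel05}, p.~101, as a classical theorem of MacLane, and the paper only \emph{uses} it (in Corollary \ref{simple=planar} and Proposition \ref{positive=facial}). So your proposal must stand on its own, and it does not: the gap you yourself flag in the hard direction is genuine and is never closed. Gluing a disk along each member of $\{C_0,C_1,\dots,C_d\}$ and computing $|V|-|E|+(d+1)=2$ proves nothing unless the resulting complex is a closed $2$-manifold, i.e.\ unless the face-corners at every vertex close up into a single cyclic order; a connected pseudosurface with pinch points can perfectly well have Euler characteristic $2$ (take two spheres and identify two pairs of points, one point from each sphere in each pair: $\chi=2+2-2=2$), so the Euler-characteristic step fails outright without the link condition. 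Your proposed repair --- split along $2$-separations, reduce to the $3$-connected case, invoke Tutte's peripheral-cycle theory --- is indeed the standard route (Diestel's own proof runs through Kuratowski: the sparse-basis property is preserved under subdivision and deletion, and $K_5$, $K_{3,3}$ admit no sparse basis), but dismissing the $2$-separation step as ``routine bookkeeping'' and leaving Tutte's theorem uninvoked in any precise form means the entire substance of the theorem is deferred. As written, this is an outline of a known strategy, not a proof.

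There are also local errors you should repair. The claim that in a $2$-connected graph every edge lies in an \emph{odd} number of basis cycles is false: in the theta graph with basis $C_1=\{e_1,e_2\}$, $C_2=\{e_2,e_3\}$, the edge $e_2$ lies in exactly two basis cycles, and the claim even contradicts your own conclusion ``exactly one or exactly two.'' What you need, and what is true, is: at least one (every edge of a $2$-connected graph lies on a cycle, and the basis spans $\mathcal{C}(G)$, so some basis element in its representation contains the edge) and at most two (sparseness). Next, $C_0$, and a priori the basis elements themselves, are only edge-disjoint unions of cycles rather than single cycles, so the disk-gluing and the girth bounds must be stated per component. Finally, your consistency check for $K_5$ and $K_{3,3}$ silently assumes $C_0\neq\emptyset$: if $C_0$ were empty, the counts would be $20/6$ and $18/4$, and neither contradicts the girth bounds; the missing observation --- and the actual twist in the standard counting argument --- is that $C_0=C_1+\dots+C_d=0$ would contradict the linear independence of the basis, so all $d+1$ members are nonempty and the averages you quote become available.
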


\begin{corollary} 
\label{simple=planar}
A graph is planar if and only if it admits a basis of its homology consisting of oriented cycles having no edges of positive overlap.
\end{corollary}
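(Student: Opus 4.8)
The plan is to use MacLane's theorem as a black box and to translate the ``sparse basis of cycles'' condition into the ``oriented homology basis with no edges of positive overlap'' condition, treating the two implications separately. The bridge between the two languages is reduction modulo $2$: an oriented cycle $\gamma$ determines the indicator function of its edge set, an element of the cycle space $\mathcal{C}(G)=H_1(G,\F_2)$, and this map sends a $\Z$-basis of the homology (such as the one furnished by Lemma \ref{basis_of_cycles}) to an $\F_2$-basis of $\mathcal{C}(G)$, since an integer matrix whose mod-$2$ reduction is invertible has odd, hence nonzero, determinant.

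For the implication ``no positive overlap $\Rightarrow$ planar,'' suppose $\{\gamma_1,\dots,\gamma_n\}$ is a homology basis of oriented cycles with no edges of positive overlap. Their indicator functions form a basis of $\mathcal{C}(G)$ by the remark above, so it suffices to check that this basis is sparse. If some edge $e$ belonged to three of the $\gamma_i$, then, since each oriented cycle traverses $e$ in one of only two directions, two of these three cycles would traverse $e$ in the same direction and hence would have an edge of positive overlap, a contradiction. Thus every edge lies in at most two basis cycles, the basis is sparse, and MacLane's theorem gives planarity.

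For the converse, ``planar $\Rightarrow$ no positive overlap,'' I would argue geometrically rather than from an abstract sparse basis, and first reduce to the $2$-connected case. Since every cycle is confined to a single block (Remark \ref{cycle_single_block}) and the block structure is tree-like, the homology of $G$ is the direct sum of the homologies of its blocks, and two oriented cycles can share an edge only if they lie in a common block; hence it is enough to produce a good oriented basis inside each block and take the union. A loop block contributes a one-dimensional homology with the loop itself as basis, which trivially has no overlaps. A non-loop block $B$ is $2$-connected and, being a subgraph of $G$, planar; fix a planar embedding. In a $2$-connected plane graph every face is bounded by a cycle, and the boundaries of the bounded faces form a basis of the cycle space. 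Orienting all faces counterclockwise with respect to the orientation of the plane produces oriented cycles that still form a homology basis, and the standard fact that two faces sharing an edge induce opposite orientations on it guarantees that any shared edge is an edge of negative, never positive, overlap.

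The step I expect to be the main obstacle is precisely this consistent choice of orientations in the converse. For an arbitrary sparse basis of cycles one cannot in general orient the members so as to avoid positive overlaps: two cycles sharing several edges could be forced into a same-direction traversal on at least one of them. What rescues the argument is that a planar embedding supplies a single global orientation of the plane, from which all the face orientations, and hence all the pairwise compatibilities on shared edges, are read off simultaneously. The reduction to $2$-connected blocks is the secondary technical point, needed so that the face boundaries are genuine oriented cycles rather than closed walks that repeat vertices at a cut vertex.
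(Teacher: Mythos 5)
Your proposal is correct and follows essentially the same route as the paper's proof: reduction to $2$-connected blocks via Remark \ref{cycle_single_block}, counterclockwise-oriented bounded-face boundaries (with opposite induced orientations on shared edges) for the planar direction, and MacLane's theorem plus the pigeonhole argument that an edge lying in three basis cycles forces two same-direction traversals for the other direction; you even supply the homology-to-cycle-space bridge that the paper merely asserts. One small repair there: your determinant remark argues the wrong implication (mod-$2$ invertibility implying nonzero integer determinant shows $\F_2$-independence forces $\Z$-independence), whereas what you need is that the change-of-basis matrix from the fundamental-cycle basis of Lemma \ref{basis_of_cycles} to your $\Z$-basis is unimodular over $\Z$ and hence invertible mod $2$ --- a standard fact, so this is a misstated justification rather than a gap.
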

\begin{proof}
Each cycle is confined to a single block of the graph and two cycles in different blocks share at most a single vertex and thus have zero 
overlap. Thus it is sufficient to prove the statement for $2$-connected graphs.

Assume $G$ is planar and $2$-connected and choose an embedding into $\R^2$. The set of boundaries of faces with the exception 
of the outer face forms a 
basis of $\mathcal{C}(G)$ that consists of cycles and is sparse, see \cite{Diestel05}, p.89. We orient all basis cycles counterclockwise 
and get a basis of $H_1(G)$. 
Then no two oriented cycles can run through the same edge in the same direction as no basis cycle can lie inside another basis cycle. 
Thus there are no edges of positive overlap.

Let $\mathcal{B}$ be a basis for $H_1(G)$. If all elements of $\mathcal{B}$ can be represented by oriented cycles, then $\mathcal{B}$ is 
also a basis for the cycle space. 
If $G$ is not planar $\mathcal{B}$ is not sparse by MacLane's theorem. Hence there is an edge in $G$ that is part of three cycles 
in $\mathcal{B}$. No matter how these three cycles are oriented, at least two of them have to go 
through this edge with the same orientation and thus have edges of positive overlap.

Any basis of the homology where every basis element can be represented by a cycle in the graph gives rise to a basis of the cycle 
space consisting of exactly these cycles.
Thus if the graph is not planar any basis of cycles of the homology is not sparse by MacLane's theorem. 
Therefore there exists an edge that is part of three basis cycles. No matter how we orient these three cycles, two of them have to go 
through this edge with the same orientation and thus have edges of positive overlap.
\end{proof}

\begin{definition}
\label{non-positive_basis}
 We call a basis of $H_1(G)$ without edges of positive overlap a {\bf non-positive basis} of the graph and 
remark that a non-positive basis is always sparse.
\end{definition}

If $G$ is $2$-connected and planar we can find a sparse basis by picking the boundaries of faces. This proposition states that the converse is 
true, too.

\begin{proposition}
\cite{MoharThomassen01}
\label{positive=facial}
 Given a sparse basis of the cycle space of a $2$-connected planar graph there exists an embedding into $\R^2$ such that all 
basis elements are boundaries of faces.
\end{proposition}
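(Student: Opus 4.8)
The plan is to reconstruct the embedding directly from the combinatorial data of the given basis, exploiting the fact that a planar embedding is encoded by its collection of facial cycles together with a rotation system, and then to force genus $0$ by a dimension count. First I would complete the basis to a full face collection. Write the sparse basis as $\{c_1,\dots,c_k\}$ with $k=\dim\mathcal{C}(G)=|E(G)|-|V(G)|+1$, and set $c_0=c_1+\cdots+c_k$ (sum in $\F_2$). Since $G$ is $2$-connected every edge lies in a cycle, hence in the support of $\mathcal{C}(G)$, so every edge lies in at least one basis element; by sparseness it lies in at most two. An edge belongs to $c_0$ exactly when it lies in an odd number of the $c_i$, that is, in exactly one of them. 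Consequently, in the enlarged collection $\mathcal{F}=\{c_0,c_1,\dots,c_k\}$ every edge of $G$ lies in exactly two members, and $|\mathcal{F}|=k+1=|E(G)|-|V(G)|+2$, which is precisely the edge--face incidence count of a plane embedding.

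Next I would read off a rotation system from $\mathcal{F}$. For each member of $\mathcal{F}$ passing through a vertex $v$, the two edges by which it enters and leaves $v$ form a \emph{corner} at $v$; since each edge at $v$ lies in exactly two members of $\mathcal{F}$, the corners organise the edge-ends at $v$ into a $2$-regular structure, hence into a disjoint union of cyclic orders. Taking these cyclic orders as the rotation at each vertex produces a combinatorial map whose face-tracing returns exactly the walks in $\mathcal{F}$. Once this map is known to be a $2$-cell embedding of the connected graph $G$ in a single closed surface of genus $g$, Euler's formula reads $|V(G)|-|E(G)|+|\mathcal{F}|=2-2g$, and the left-hand side equals $2$ by the count above; moreover, the facial cycles then span a subspace of $\mathcal{C}(G)$ of dimension $|\mathcal{F}|-1=k-2g$, while this span already contains the $k$ independent elements $c_1,\dots,c_k$. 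Hence $k-2g\ge k$, forcing $g=0$, so the map is a genuine planar embedding realising every $c_i$ as a facial boundary.

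The main obstacle will be justifying that the combinatorial map is a $2$-cell embedding in a single surface, i.e.\ ruling out ``pinches'': the only way it can fail is for the corners at some vertex $v$ to split the edge-ends into two or more cyclic orders rather than one. I would exclude this using $2$-connectedness together with the independence of $\{c_1,\dots,c_k\}$. A pinch at $v$ would allow one to split $v$ into several vertices while keeping $\mathcal{F}$ as the face set; this either disconnects $G$ at $v$ --- impossible, as a $2$-connected graph has no cut vertex --- or it exhibits the face boundaries as spanning a strictly smaller subspace of $\mathcal{C}(G)$, contradicting that $c_1,\dots,c_k$ are independent with sole relation $c_0=\sum_i c_i$. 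Verifying cleanly that every potential pinch forces one of these two contradictions is the delicate part of the argument; everything else is the bookkeeping above.
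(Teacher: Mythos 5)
The paper offers no proof of this proposition --- it is quoted from Mohar--Thomassen --- so your attempt can only be measured against the standard argument, which is indeed the disk-gluing/pseudosurface argument you sketch; your overall strategy is the right one and can be completed. But as written there are two genuine gaps. The first you flag yourself: the entire weight of the proof rests on excluding pinches, and the per-vertex dichotomy you propose is not established and is misleading as stated. A pinch at $v$ need not make $v$ a cut vertex, and splitting a single vertex does not change the face boundaries as edge sets, so no ``strictly smaller span'' is visible locally. Concretely, take two $4$-cycles $uavb$ and $ucvd$ sharing the opposite vertices $u$ and $v$: the graph is $2$-connected, and gluing four disks so as to form two spheres pinched at both $u$ and $v$ gives a complex with every edge in exactly two faces in which neither pinch vertex is a cut vertex; the obstruction is global --- the four face boundaries span only a $2$-dimensional subspace while $\dim\mathcal{C}(G)=3$. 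The actual repair is a global count: split all pinch vertices simultaneously to obtain closed surfaces $S_1,\dots,S_t$; since $H_2$ of a closed surface over $\F_2$ is one-dimensional, the face boundaries span exactly $f-t$ dimensions ($f$ the total number of faces), so independence of $c_1,\dots,c_k$ bounds $t$ above; the constraint $\sum_j \dim H_1(S_j;\F_2)\ge 0$ with $\chi(S_j)=2-\dim H_1(S_j;\F_2)$ (valid for non-orientable surfaces too, which your rotation-system/``$2-2g$'' formulation silently excludes) bounds it below; together with connectedness of $G$ these force the surface--pinch incidence structure to be a tree, whence any pinch vertex \emph{would} be a cut vertex, contradicting $2$-connectedness.

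The second gap is that you treat $c_0=c_1+\cdots+c_k$ as a single cycle tracing a single face. An element of the cycle space is in general only an edge-disjoint union of cycles; if $c_0$ decomposes into $s\ge 2$ cycles, the complex has $f=k+s$ faces, your count $|V|-|E|+|\mathcal{F}|$ equals $1+s$ rather than $2$, and the clean ``Euler characteristic $2$ forces genus $0$'' step collapses. (Independence immediately gives $c_0\neq 0$, but not $s=1$.) The case $s\ge 2$ must be excluded by the same global bookkeeping as the pinches: with pinch defect $P=\sum_v(r_v-1)$ one gets $t\le s$ from independence, $2t\ge 1+s+P$ from the surface constraint, and $t\le 1+P$ from connectedness, forcing $t=s$, $P=s-1$, all $S_j$ spheres, and the tree structure above, so $s\ge 2$ again produces a cut vertex. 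In short: correct architecture, but both the face count and the pinch exclusion --- the heart of the theorem --- require this global argument, and neither is supplied in your sketch.
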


\section{Dual graphs}

If a graph is planar one can introduce a notion of its dual graph. It is based on the faces of an embedding, that is on the 
sparse basis we defined above. After we have shown that the Bloch spectrum determines planarity we will analyze the 
sparse basis we found further and use it to construct a dual of the graph. This will eventually lead to our theorem that 
$3$-connected planar quantum graphs are completely determined by their spectrum.

We will present two different ways of defining the dual and list some properties.

\begin{definition}
Given a planar graph $G$ we associate to each embedding into the plane a {\bf geometric dual} graph $G^*$. 
The vertices of $G^*$ are the 
faces in the embedding of $G$. The number of edges joining $2$ vertices in $G^*$ is the number of edges 
that the corresponding faces in $G$ have in common. 
\end{definition}

\begin{definition}
 A {\bf cut} of a graph $G$ is a subset of (open) edges $S$ such that $G\setminus S$ is disconnected. A cut is {\bf minimal} if 
no proper subset of $S$ is a cut.
\end{definition}

\begin{definition}
 Given a planar graph $G$, a graph $G^*$ is an {\bf abstract dual} of $G$ if there is a bijective map $\psi : E(G) \ra E(G^*)$ such that 
for any $S \subseteq E(G)$ the set $S$ is a cycle in $G$ if and only if $\psi(S)$ is a minimal cut in $G^*$.
\end{definition}

\begin{proposition}
(\cite{Diestel05}, p.105)
 Any geometric dual of a 2-connected planar graph is an abstract dual and vice versa. 
A planar graph can have multiple non-isomorphic duals. 
Any dual of a planar graph is planar, and $G$ is a dual of $G^*$. If $G$ is $3$-connected, then $G^*$ is unique up to isomorphism.
\end{proposition}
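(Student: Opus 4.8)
The plan is to reduce the entire statement to one duality lemma relating cycles of $G$ to minimal cuts of $G^*$, prove that lemma with the Jordan curve theorem, and then read each assertion off as a consequence. Since the central claim assumes $G$ to be $2$-connected, I would work with a fixed plane embedding of a $2$-connected graph $G$, in which the boundary of every face is a cycle and every edge lies on the boundary of exactly two distinct faces. This last fact supplies the natural edge bijection $\psi \colon E(G) \to E(G^*)$ that sends an edge $e$ to the dual edge $e^*$ joining the two faces flanking $e$.

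\emph{Duality lemma.} For $S \subseteq E(G)$, the set $S$ is a cycle in $G$ if and only if $\psi(S)$ is a minimal cut in $G^*$. First I would prove the forward direction: a cycle $C$, regarded as a simple closed curve in $\R^2$, separates the plane into an interior and an exterior by the Jordan curve theorem, and every face of $G$ lies entirely on one side. Splitting the vertices of $G^*$ (the faces of $G$) into the interior and the exterior classes, a dual edge $e^*$ joins the two classes exactly when $e \in C$, so $\psi(C)$ is precisely the set of dual edges between the classes, hence a cut; minimality follows once I verify that the interior faces and the exterior faces each span a connected subgraph of $G^*$, so that $G^* \setminus \psi(C)$ has exactly two components. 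For the converse I would apply the forward direction to $G^*$, using the double-dual identity $G^{**} \cong G$, under which minimal cuts of $G^*$ correspond to cycles of $G^{**} = G$.

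Granting the lemma, the assertions follow in turn. That a geometric dual is an abstract dual is then immediate, with $\psi$ as the required bijection. Planarity of $G^*$ is built into the construction: placing one vertex in each face of $G$ and drawing each dual edge across the corresponding primal edge produces a plane drawing of $G^*$. The identity $G^{**} \cong G$ I would establish by checking, in that drawing, that the faces of $G^*$ biject with the vertices of $G$ --- each vertex of $G$ being encircled by the duals of its incident edges --- so that dualizing $G^*$ returns $G$. For the ``vice versa'' direction, starting from an arbitrary abstract dual $(G^*,\psi)$ I would note that the cut space of $G^*$ equals $\psi$ applied to the cycle space of $G$, then use a sparse facial basis of $\mathcal{C}(G)$ furnished by Proposition \ref{positive=facial} to produce an embedding whose geometric dual carries exactly this cut structure, identifying it with $G^*$ via Whitney's characterization of abstract duals. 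The non-uniqueness claim needs only one example: a planar graph with a $2$-cut admits two embeddings related by a Whitney flip whose geometric duals are non-isomorphic. Finally, if $G$ is $3$-connected I would invoke Whitney's theorem that such a graph has an essentially unique embedding in the sphere (unique up to reflection), whence $G^*$ is unique up to isomorphism.

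The main obstacles are concentrated in two places. The harder half of the duality lemma and the ``vice versa'' direction both depend on the double-dual identity together with Whitney's theorem that planarity is equivalent to possessing an abstract dual; these are the genuine inputs rather than routine checks. The uniqueness statement for $3$-connected graphs is precisely Whitney's unique-embedding theorem, which is the single deepest ingredient and which I would cite rather than reprove.
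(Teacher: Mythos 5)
The paper offers no proof of this proposition at all---it is quoted from Diestel, p.~105---so your proposal can only be measured against the standard textbook argument, which it follows in outline (Jordan curve theorem for the cycle/minimal-cut duality, a Whitney flip for non-uniqueness, Whitney's unique-embedding theorem for the $3$-connected case). There is, however, one genuine logical gap: the backward direction of your duality lemma. Applying the forward direction to $G^*$ together with $G^{**}\cong G$ yields the implication ``every cycle of $G^*$ dualizes to a minimal cut of $G^{**}=G$,'' i.e.\ a correspondence from cycles of $G^*$ to cuts of $G$. It does \emph{not} yield what you claim, namely ``every minimal cut of $G^*$ comes from a cycle of $G$.'' For no choice of primal graph $H$ does the forward implication ``$S$ a cycle in $H$ $\Rightarrow$ $\psi(S)$ a minimal cut in $H^*$'' specialize to the implication you need, ``$\psi(S)$ a minimal cut in $G^*$ $\Rightarrow$ $S$ a cycle in $G$'': duality exchanges the roles of cycles and cuts, so the statement you want is precisely the converse for $G$, and invoking it for $G^*$ is circular. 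The standard repair is short: first show that if $S\subseteq E(G)$ contains no cycle then $\psi(S)$ is not a cut---a plane forest does not separate the plane, so any two faces of $G$ can be joined by an arc avoiding all vertices and all edges of $S$, whence $G^*\setminus\psi(S)$ is connected. Consequently, if $\psi(S)$ is a minimal cut, $S$ contains a cycle $C$; by your forward direction $\psi(C)\subseteq\psi(S)$ is already a cut, and minimality forces $\psi(C)=\psi(S)$, so $S=C$ is a cycle.

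A secondary soft spot is the ``vice versa'' step. Knowing that the cut space of $G^*$ equals $\psi$ applied to the cycle space of $G$ determines $G^*$ only up to $2$-isomorphism---indeed the proposition's own second sentence (and the paper's Figure \ref{non_unique_dual}) shows that cycle/cut data cannot pin down the isomorphism type. To make the identification honest you should pull back the \emph{specific} sparse basis of the cut space given by the vertex stars of $G^*$ (each edge of $G^*$ lies in exactly two stars, so the pulled-back cycle basis is sparse), apply Proposition \ref{positive=facial} to that basis, and then check that the faces of the resulting embedding biject with the vertices of $G^*$ with matching edge multiplicities, since $e^*$ joins $u$ and $v$ in $G^*$ exactly when $e$ lies in both corresponding face boundaries. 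With that choice made explicit, and with the converse of the duality lemma repaired as above, the remaining assertions follow as you describe.
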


\begin{definition}
 We call two graphs $G$ and $H$ {\bf $2$-isomorphic} if there is a bijection between their edge sets that carries cycles to cycles. Note 
that this does not imply that the graphs are isomorphic.
\end{definition}

\begin{example}
Figure \ref{non_unique_dual} shows two graphs that are $2$-isomorphic but not isomorphic. In one of them the two vertices of degree 
4 are adjacent, in the other one they are not. The third graph is a common dual of them.

\begin{figure}[ht]
\centering
\scalebox{0.85}{\includegraphics{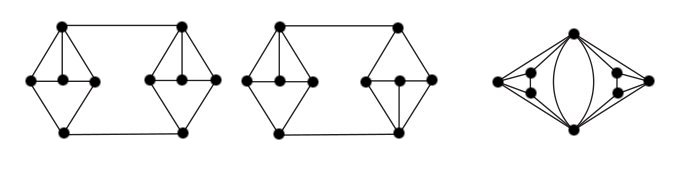}}
\caption{Two $2$-isomorphic graphs and one of their duals}
\label{non_unique_dual}
\end{figure}

\end{example}

\begin{lemma}
\label{2-isomorphic}
Two planar graphs $G$ and $H$ are $2$-isomorphic if and only if they have the same set of abstract duals.
\end{lemma}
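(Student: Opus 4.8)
The guiding observation is that the definition of an abstract dual refers to $G$ only through its cycle structure: a graph $D$ together with a bijection $\psi\colon E(G)\to E(D)$ is an abstract dual of $G$ exactly when the family $\{\psi^{-1}(C): C \text{ a minimal cut of } D\}$ coincides with the family of cycles of $G$. Hence whether a given graph $D$ is an abstract dual of $G$ depends on $G$ only through which subsets of $E(G)$ are cycles, and the proof of both implications will be a matter of transporting this data along the relevant bijections.

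For the forward implication, suppose $G$ and $H$ are $2$-isomorphic, witnessed by a bijection $\phi\colon E(G)\to E(H)$ that induces a bijection between the cycles of $G$ and those of $H$ (I read the $2$-isomorphism condition in its symmetric form, so that $\phi$ and $\phi^{-1}$ both carry cycles to cycles). Let $D$ be any abstract dual of $G$, with witnessing bijection $\psi\colon E(G)\to E(D)$. The plan is then to check that $\psi\circ\phi^{-1}\colon E(H)\to E(D)$ witnesses $D$ as an abstract dual of $H$: for $T\subseteq E(H)$ one has $T$ a cycle in $H$ iff $\phi^{-1}(T)$ is a cycle in $G$ iff $\psi(\phi^{-1}(T))=(\psi\circ\phi^{-1})(T)$ is a minimal cut in $D$. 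Thus every abstract dual of $G$ is an abstract dual of $H$, and by the symmetric roles of $G$ and $H$ the two sets of abstract duals coincide.

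For the converse, assume $G$ and $H$ have the same set of abstract duals. Since both graphs are planar, this set is nonempty (a geometric dual arising from any planar embedding is an abstract dual, by the proposition of \cite{Diestel05}, p.105), so I may fix a graph $D$ lying in it, together with witnessing bijections $\psi_G\colon E(G)\to E(D)$ and $\psi_H\colon E(H)\to E(D)$. Setting $\phi=\psi_H^{-1}\circ\psi_G\colon E(G)\to E(H)$, I would verify directly that $\phi$ is a $2$-isomorphism: for $S\subseteq E(G)$, the subset $S$ is a cycle in $G$ iff $\psi_G(S)$ is a minimal cut in $D$ iff $\psi_H^{-1}(\psi_G(S))=\phi(S)$ is a cycle in $H$, so $\phi$ carries cycles to cycles in both directions.

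The arguments themselves are short, and the real content is entirely in the opening observation; everything else is the formal bookkeeping of composing the witnessing bijections. The only points I expect to require genuine care are (i) the precise reading of the $2$-isomorphism condition as a bijection of cycle sets rather than a one-sided containment — I would either adopt this as the intended meaning of \emph{carries cycles to cycles} or first prove that a cycle-preserving edge bijection between graphs of equal cycle-space dimension is automatically cycle-preserving in reverse — and (ii) the existence of at least one abstract dual for a planar graph, which is needed only to make the common dual $D$ in the converse available.
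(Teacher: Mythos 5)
Your proof is correct and follows essentially the same route as the paper's: both directions are handled by composing the witnessing edge bijections ($\psi\circ\varphi^{-1}$ for the forward direction, $\psi_2^{-1}\circ\psi_1$ for the converse), with the only additions being your explicit remarks on the nonemptiness of the set of abstract duals and the symmetric reading of $2$-isomorphism, which the paper leaves implicit.
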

\begin{proof}
 Let $\varphi : E(G) \ra E(H)$ be a $2$-isomorphism and let $G^*$ be an abstract dual of $G$ with edge bijection $\psi$. Then 
$\psi \circ \varphi^{-1}$ is an edge bijection that makes $G^*$ an abstract dual of $H$.

Let $G$ and $H$ have the same abstract duals and let $G^*$ be an abstract dual. Let $\psi_1$ be an edge bijection between $G$ and $G^*$ and 
let $\psi_2$ be an edge bijection between $H$ and $G^*$. Then $\psi_2^{-1}\circ\psi_1$ is a $2$-isomorphism between $G$ and $H$.
\end{proof}

\section{Spectra of combinatorial graphs}

The material in this section is mostly taken from Chung's book \cite{Chung97}. The spectral theory of combinatorial graphs is 
very different from that of quantum graphs because the spectrum is finite in this case. Nevertheless, if a quantum graph is 
equilateral there is a close relation between its spectrum and the spectrum of the underlying combinatorial graph. We will 
use this relation to carry over some examples of graph-isospectrality to the quantum graph setting.

\begin{definition}
\label{weight_function}
We define a {\bf weight function} $w : V \times V \ra \N$ 
on a graph as follows. If $v \neq v'$ then $w(v,v')$ is the number of edges between $v$ and $v'$. On the diagonal $w(v,v)$ is 
half the number of loops attached at $v$. Note that the degree of a vertex is given by $deg(v)=\sum_{v'\in V}w(v,v')$.  
\end{definition}

\begin{definition}
Let $f: V \ra \R$ be a function on the vertices of a combinatorial graph. Then the {\bf combinatorial Laplacian} $\Delta^C$ acts as follows:
\begin{equation*}
 \Delta^C f(v) := f(v)- \sum_{v' \in V} \frac{w(v,v')f(v')}{\sqrt{deg(v)deg(v')}}
\end{equation*}
\end{definition}

\begin{lemma}
 The operator $\Delta^C$ can be written as the matrix
\begin{equation*}
 (\Delta^C)_{v,v'} = \begin{cases} 1-\frac{w(v,v)}{deg(v)} & v=v', \\
		 -\frac{w(v,v')}{\sqrt{deg(v)deg(u)}} & v \neq v'
\end{cases}
\end{equation*}
where we see functions on the combinatorial graph as vectors in $\R^V$.
The spectrum of the operator $\Delta^C$ is the (finite) list of eigenvalues of this matrix including multiplicities.
\end{lemma}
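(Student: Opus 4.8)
The plan is to verify that $\Delta^C$ is a linear endomorphism of the finite-dimensional space $\R^V$ and then simply read off its matrix entries from the defining formula. First I would observe that the assignment $f \mapsto \Delta^C f$ is $\R$-linear: it is built from the value $f(v)$ together with a linear combination of the values $f(v')$ whose coefficients $\frac{w(v,v')}{\sqrt{deg(v)deg(v')}}$ depend only on the graph and not on $f$. Hence, once we identify a function $f$ with the vector $(f(v))_{v\in V}\in\R^V$, the operator $\Delta^C$ is represented by a genuine $|V|\times|V|$ matrix, and by definition the $(v,v')$ entry of this matrix is the coefficient of $f(v')$ in the expansion of $\Delta^C f(v)$.

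Next I would extract these coefficients directly from
\[
\Delta^C f(v) = f(v) - \sum_{v' \in V} \frac{w(v,v')}{\sqrt{deg(v)deg(v')}}\, f(v'),
\]
splitting into the diagonal and off-diagonal cases. On the diagonal $v'=v$ two contributions combine: the leading term $f(v)$ contributes $+1$, while the summand at $v'=v$ contributes $-\frac{w(v,v)}{\sqrt{deg(v)deg(v)}} = -\frac{w(v,v)}{deg(v)}$, so the coefficient of $f(v)$ is $1-\frac{w(v,v)}{deg(v)}$. Off the diagonal $v'\neq v$ the leading term contributes nothing and only the summand survives, yielding the coefficient $-\frac{w(v,v')}{\sqrt{deg(v)deg(v')}}$. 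These are precisely the two cases listed in the claimed matrix.

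Finally, for the statement about the spectrum, I would invoke the elementary fact that a linear endomorphism of a finite-dimensional vector space has spectrum equal to the set of eigenvalues of any matrix representing it, counted with multiplicity; since $\R^V$ has dimension $|V|$, the spectrum of $\Delta^C$ is exactly the finite eigenvalue list of the matrix above. There is essentially no genuine obstacle in this argument: it is a direct coefficient comparison together with the definition of the spectrum in the finite-dimensional setting. The only point deserving a moment of care is the diagonal entry, where one must remember to fold the $v'=v$ term of the sum into the leading $f(v)$ term, so as neither to omit it nor to count it twice.
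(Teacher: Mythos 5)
Your proof is correct and is exactly the routine coefficient-comparison argument the paper tacitly relies on (the lemma is stated there without proof, as it follows immediately from the definition of $\Delta^C$); your care with the diagonal term, folding the $v'=v$ summand into the leading $f(v)$ so that $1-\frac{w(v,v)}{deg(v)}$ emerges, is the one point worth checking and you handle it correctly. Note in passing that the $deg(u)$ in the paper's off-diagonal entry is a typo for $deg(v')$, which your version silently and correctly fixes.
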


\begin{definition}
The {\bf combinatorial spectrum} of a combinatorial graphs is the spectrum of $\Delta^C$. 
 If two combinatorial graphs have the same combinatorial spectrum we say they are {\bf graph-isospectral}.
\end{definition}

\begin{remark}
 There are alternative definitions of the spectrum of a combinatorial graph. Some authors (for example \cite{CDS95}) 
call the collection of eigenvalues of the adjacency matrix the spectrum of the graph. However, all these definitions give 
rise to the same notion of isospectrality.
\end{remark}

\begin{proposition}
\cite{Chung97}
 The combinatorial spectrum uniquely identifies complete combinatorial graphs.
\end{proposition}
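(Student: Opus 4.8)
The plan is to first compute the spectrum of $K_n$ explicitly and then show that no other connected combinatorial graph — and, after excluding loops and multiple edges, no other combinatorial graph at all — reproduces it. For $K_n$ every vertex has degree $n-1$, there are no loops, and $w(v,v')=1$ whenever $v\neq v'$, so the matrix form of $\Delta^C$ gives $\Delta^C = \frac{n}{n-1}I - \frac{1}{n-1}J$, where $J$ is the all-ones matrix and $I$ the identity. Since $J$ acts as multiplication by $n$ on the all-ones vector and by $0$ on its orthogonal complement, the combinatorial spectrum of $K_n$ consists of $0$ with multiplicity $1$ and $\frac{n}{n-1}$ with multiplicity $n-1$.

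Next I would read off the elementary invariants. The number of eigenvalues counted with multiplicity is $n$, so any graph-isospectral $G$ has exactly $n$ vertices. The multiplicity of the eigenvalue $0$ equals the number of connected components — the kernel of $\Delta^C$ is spanned by the vectors $v\mapsto\sqrt{deg(v)}$ supported on the individual components — so $G$ is connected. It therefore suffices to prove: a connected graph on $n$ vertices whose only nonzero eigenvalue is $\frac{n}{n-1}$ must be $K_n$.

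For this, note that the trace of $\Delta^C$ equals $\sum_v\bigl(1-w(v,v)/deg(v)\bigr)$, which for a loopless graph is $n$; as $\Delta^C$ then has $n-1$ positive eigenvalues summing to $n$, their average is exactly $\frac{n}{n-1}$, and the smallest positive eigenvalue satisfies $\lambda_1\le\frac{n}{n-1}$, with equality forcing all positive eigenvalues to coincide (the extremal bound in \cite{Chung97}). In the equality case $\Delta^C$ has just two eigenvalues, so the normalized adjacency $\mathcal{A}=I-\Delta^C$, with eigenvalues $1$ and $-\frac{1}{n-1}$, satisfies the quadratic $\mathcal{A}^2=\frac{n-2}{n-1}\mathcal{A}+\frac{1}{n-1}I$. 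Reading the $(v,u)$ entry for non-adjacent $v\neq u$ gives $\sum_w\mathcal{A}_{vw}\mathcal{A}_{wu}=0$; since every summand is nonnegative, non-adjacent vertices share no common neighbor. Combined with connectivity this forces diameter $1$: otherwise the endpoints of a length-$2$ subpath of a shortest path would be non-adjacent yet share a neighbor. Hence $G=K_n$.

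The main obstacle will be the bookkeeping that the paper's broader graph model — permitting loops and multiple edges — yields no second, non-complete graph with this spectrum. Loops change the diagonal of $\Delta^C$ and so break the clean trace count of $n$, while multiple edges alter degrees and weights. I expect to dispose of both by showing that the trace together with the single repeated value $\frac{n}{n-1}$ already pin down $deg(v)=n-1$ and $w(v,v)=0$ for every vertex, reducing $G$ to a simple graph to which the common-neighbor argument above applies verbatim.
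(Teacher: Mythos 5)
The paper gives no proof of this proposition at all --- it is quoted from Chung's book --- so your argument stands on its own, and its core is correct and is essentially the standard one. The computation $\Delta^C(K_n)=\frac{n}{n-1}I-\frac{1}{n-1}J$ with spectrum $\{0,\frac{n}{n-1}\text{ (multiplicity }n-1)\}$ is right; the vertex count and connectivity are read off the multiplicities correctly; the trace identity $\mathrm{tr}\,\Delta^C=\sum_v\bigl(1-w(v,v)/\deg(v)\bigr)=n$ does force $w(v,v)=0$ for every $v$, so loops really are disposed of cleanly; and the two-eigenvalue argument --- $\mathcal{A}=I-\Delta^C$ satisfies a quadratic, its entries are nonnegative, hence non-adjacent vertices share no neighbor, contradicting connectivity unless the diameter is $1$ --- is airtight for simple graphs.

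The genuine gap is exactly the step you deferred to the end: the hope that the spectrum pins down $\deg(v)=n-1$ and thereby excludes multiple edges is false, and no bookkeeping can rescue it. The operator $\Delta^C=I-D^{-1/2}WD^{-1/2}$ is invariant under uniform scaling of the weights: replacing every edge of $K_n$ by $k$ parallel edges multiplies both $W$ and $D$ by $k$ and leaves $\Delta^C$, hence the combinatorial spectrum, unchanged. Concretely, two vertices joined by $k$ parallel edges have spectrum $\{0,2\}$ for every $k$, identical to $K_2$. One can see precisely what the spectrum does determine: your quadratic relation says $\mathcal{A}+\frac{1}{n-1}I$ has rank one, so $\mathcal{A}=-\frac{1}{n-1}I+\frac{n}{n-1}\phi\phi^T$ where $\phi$ is the normalized vector $v\mapsto\sqrt{\deg(v)}$; vanishing of the diagonal (no loops, from the trace) forces all degrees equal to some $d$, and the off-diagonal entries then give $w(v,v')=d/(n-1)$ for all $v\neq v'$. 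So the graphs with the spectrum of $K_n$ are exactly $K_n$ with every edge repeated $k=d/(n-1)$ times. Since this paper's definition of combinatorial graph explicitly allows multiple edges, you must either add the hypothesis that the graph is simple (Chung's setting, under which your argument is complete) or weaken the conclusion to ``complete up to uniform edge multiplicity.''
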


\begin{proposition}
\cite{Chung97}
 The combinatorial spectrum of a graph determines whether or not it is bipartite.
\end{proposition}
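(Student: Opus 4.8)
The plan is to exploit the fact that the normalized operator $\Delta^C$ has all of its eigenvalues confined to the interval $[0,2]$, and that the extreme value $2$ occurs in the spectrum precisely for bipartite graphs. Since our standing assumption is that graphs are connected, detecting whether $2$ is an eigenvalue is then equivalent to detecting bipartiteness, and membership of $2$ in the spectrum is manifestly read off from the combinatorial spectrum. So the real content is the two-sided eigenvalue bound together with the equality analysis at $\lambda = 2$.

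First I would pass to the substitution $g(v) = f(v)/\sqrt{\deg(v)}$ and rewrite the Rayleigh quotients associated to $\Delta^C$ and to $2I - \Delta^C$. A direct computation using the relation $\deg(v) = \sum_{v'} w(v,v')$ from Definition \ref{weight_function} gives
$$\langle \Delta^C f, f\rangle = \sum_{\{u,v\}} w(u,v)\,(g(u)-g(v))^2, \qquad \langle (2I - \Delta^C) f, f\rangle = \sum_{\{u,v\}} w(u,v)\,(g(u)+g(v))^2,$$
where the sums run over unordered pairs and a loop at $v$ contributes the term $4\,w(v,v)\,g(v)^2$ to the second form. Both quadratic forms are manifestly nonnegative, so every eigenvalue $\lambda$ of $\Delta^C$ satisfies $0 \le \lambda \le 2$.

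Next I would analyze the equality case $\lambda = 2$. An eigenvector for the eigenvalue $2$ is exactly a nonzero $g$ annihilating the second quadratic form, i.e. one satisfying $g(u) = -g(v)$ for every edge $\{u,v\}$ and $g(v) = 0$ whenever $v$ carries a loop. On a connected graph such a $g$ is nowhere zero and takes only two opposite values, so the partition of $V$ into the level sets $\{g > 0\}$ and $\{g < 0\}$ is a bipartition; the loop condition simply records that a loop is an odd cycle and hence an obstruction to bipartiteness. Conversely, given a bipartition $V = V_1 \sqcup V_2$, setting $g \equiv 1$ on $V_1$ and $g \equiv -1$ on $V_2$ produces an eigenvector with eigenvalue $2$. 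Thus, for a connected graph, $2$ lies in the spectrum of $\Delta^C$ if and only if the graph is bipartite.

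The routine part is verifying the two quadratic-form identities; the step I expect to require the most care is the bookkeeping for the weight function $w$, in particular ensuring that loops and multiple edges enter the forms with the correct multiplicities so that the loop case genuinely forces $g(v)=0$ and thereby excludes $2$ from the spectrum. Once the equality analysis is in place the proposition is immediate: the combinatorial spectrum records whether $2$ appears, and by the above this is exactly the bipartiteness of the connected graph.
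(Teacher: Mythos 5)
The paper gives no proof here---it simply cites Chung---and your argument is exactly the standard one from that source: the substitution $g(v)=f(v)/\sqrt{\deg(v)}$, the two nonnegative quadratic forms yielding $0\le\lambda\le 2$, and the equality analysis at $\lambda=2$ on a connected graph, all of which is correct (including the observation that a loop forces $g(v)=0$ and so obstructs the eigenvalue $2$). The only slip is a harmless constant in your bookkeeping: under the paper's conventions the loop at $v$ enters the second form as the diagonal term $\tfrac12\,w(v,v)\,\bigl(2g(v)\bigr)^2=2\,w(v,v)\,g(v)^2$, not $4\,w(v,v)\,g(v)^2$, which affects nothing in the positivity or equality analysis.
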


\begin{theorem}
\label{isospec_combo_graphs}
\cite{BrGoGu98}, \cite{Seress00}
There are families of non-isomorphic graph-isospectral combinatorial graphs of size that grows exponentially in the number of edges.
\end{theorem}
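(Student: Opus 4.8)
The plan is to build the families by \textbf{Godsil--McKay switching}, a local surgery that leaves the adjacency spectrum unchanged, and then to iterate it at many mutually independent sites. Throughout I would restrict to $d$-regular graphs, so that the combinatorial spectrum (the spectrum of $\Delta^C$) is an affine function of the adjacency spectrum: for a $d$-regular simple graph one has $\Delta^C = I - \frac{1}{d}A$, and hence two $d$-regular graphs are graph-isospectral in the sense of this chapter if and only if their adjacency matrices are cospectral. It therefore suffices to produce, for arbitrarily large $m$, one connected $d$-regular graph with $O(m)$ edges that admits $2^{\Omega(m)}$ pairwise non-isomorphic $d$-regular adjacency-cospectral graphs.

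First I would prove the switching lemma. Call a partition $V(G) = C_1 \cup \dots \cup C_k \cup D$ a \emph{switching partition} if (i) each vertex of $C_i$ has a constant number of neighbours in every $C_j$, and (ii) each vertex of $D$ has $0$, $\tfrac12|C_i|$, or $|C_i|$ neighbours in each $C_i$. Let $Q$ be the symmetric orthogonal involution acting as $\tfrac{2}{|C_i|}J - I$ on the coordinates of each $C_i$ and as the identity on $D$. A block computation shows that $A' := QAQ$ is again a symmetric $0/1$ matrix with zero diagonal: the cell--cell blocks are left unchanged by condition (i), while for each $y \in D$ the corresponding $C_i$-column of $A'$ is $\mathbf{0}$, $\mathbf{1}$, or the complement of its old value exactly under the three cases of condition (ii). Thus $A'$ is the adjacency matrix of a graph $G'$, and $A' = QAQ$ is orthogonally similar to $A$, so $G$ and $G'$ are adjacency-cospectral. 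Concretely $G'$ is obtained by replacing, for each $y \in D$ with exactly $\tfrac12|C_i|$ neighbours in $C_i$, those neighbours by the complementary half of $C_i$. I would then force \emph{degree preservation} by designing the base graph to be regular and the switching sites to be balanced (each vertex of a $C_i$ meeting the same number of half-adjacent $y \in D$), so that $G'$ stays $d$-regular and adjacency-cospectrality upgrades to equality of the spectra of $\Delta^C$.

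The exponential family then comes from independence. I would assemble a single connected $d$-regular graph $G_0$ containing $m$ disjoint switching cells $C^{(1)}, \dots, C^{(m)}$ (each a balanced $4$-set in the standard Godsil--McKay configuration), wired to disjoint, rigidly distinguishable gadgets so that switching or not switching cell $C^{(t)}$ is an independent binary choice. This yields $2^m$ graphs $G_S$, $S \subseteq \{1,\dots,m\}$, all $d$-regular and pairwise adjacency-cospectral, hence all graph-isospectral. Each $G_S$ has $O(m)$ vertices and $|E| = O(m)$ edges, so once $2^{\Omega(m)}$ of them are shown pairwise non-isomorphic the family has size $e^{\Omega(|E|)}$, as claimed.

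The hard part is exactly this last count. Since every $G_S$ has only $n = O(m)$ vertices, a priori as many as $n! = e^{O(m\log m)}$ of the labelled graphs could share a single isomorphism type, which would swamp $2^m$. The step I expect to require the most care is a \emph{rigidity} design ensuring that any isomorphism $G_S \to G_{S'}$ must respect the cell--gadget decomposition: attaching an asymmetric marker to each cell makes the switched state of that cell a local isomorphism invariant, forcing $S = S'$ and rendering all $2^m$ graphs pairwise non-isomorphic. As an alternative route matching \cite{BrGoGu98}, one could realise the family representation-theoretically, by finding a finite group with exponentially many pairwise Gassmann-equivalent (almost conjugate) yet pairwise non-conjugate subgroups and passing to their Schreier graphs, which the graph-theoretic Sunada theorem makes mutually isospectral while non-conjugacy keeps them non-isomorphic; there the delicate point is instead exhibiting exponentially many non-conjugate almost-conjugate subgroups.
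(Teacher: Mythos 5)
Your switching half is sound: the Godsil--McKay lemma is stated and verified correctly (the block computation with $Q_i=\tfrac{2}{|C_i|}J-I$ does work exactly as you describe), and the reduction $\Delta^C=I-\tfrac{1}{d}A$ for $d$-regular simple graphs is the right way to convert adjacency cospectrality into graph-isospectrality in the sense of this chapter. For context, the paper does not prove this theorem at all: it cites \cite{BrGoGu98} and \cite{Seress00} (which yield families of size $\tfrac{1}{2}e^{\ln(2)n/12}$ with loops and multiple edges, resp.\ $\tfrac{1}{80}e^{\ln(2)n/24}$ for simple regular graphs) and only sketches the mechanism in a remark, namely Seidel-type switching between $k$ regular blocks in which every vertex is adjacent to exactly half of each opposite block. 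Your many-small-cells Godsil--McKay design is a close cousin of that mechanism, and your fallback via Gassmann-equivalent subgroups is a legitimate alternative route; so at the level of the isospectrality claim your approach is fine.

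The genuine gap is exactly at the step you flag, and your proposed fix does not survive scrutiny. The ``asymmetric marker'' plan cannot deliver $e^{\Omega(|E|)}$ as stated: if the markers are local objects of bounded size, there are only finitely many isomorphism types of such markers, so for large $m$ two cells necessarily carry isomorphic markers and pairwise distinguishability fails; if instead the markers grow so as to be pairwise non-isomorphic, their total size is $\Theta(m^2)$, so $|E|=\Theta(m^2)$ and your $2^m$ graphs give only $e^{\Omega(\sqrt{|E|})}$ --- which does not prove the theorem. Distinguishability must therefore come from global position in a rigid template, but then ``the switched state of a cell is a local isomorphism invariant'' no longer suffices: you must show that every isomorphism $G_S\ra G_{S'}$ respects the cell--gadget decomposition (canonical recoverability of the template plus triviality of its automorphism action on cells), which is precisely the nontrivial counting content of Seress's argument. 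A further unresolved constraint: attaching any marker or backbone changes degrees, so keeping the graphs $d$-regular --- which your spectral reduction requires --- conflicts with the gadget design and is nowhere arranged. In short, the easy half (cospectrality under switching) is complete, but the hard half (exponentially many pairwise non-isomorphic graphs within linear edge budget), the very part the paper outsources to \cite{BrGoGu98} and \cite{Seress00}, is asserted rather than proved, and the specific mechanism you propose for it fails quantitatively.
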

Brooks, Gornet and Gustafson  construct families of regular graph-isospectral combinatorial graphs on $n$ edges of size 
$\frac{1}{2}e^{\ln(2)n/12}$
in \cite{BrGoGu98}. These graphs contain loops and multiple edges. Seress builds graph-isospectral families of simple regular graphs of 
size $\frac{1}{80}e^{\ln(2)n/24}$ in \cite{Seress00}.

\begin{remark}
 Both of the above constructions are based on the method of Seidel switching. 
The base case works as follows. Let $G_1=(V_1, E_1)$ and $G_2=(V_2, E_2)$ be two regular simple combinatorial graphs. We will 
now construct two new combinatorial graphs $G$ and $\tilde{G}$ that are graph-isospectral. The vertex set of $G$ and 
$\tilde{G}$ is $V_1 \sqcup V_2$. All edges in $G_1$ and $G_2$ are also edges in $G$ and $\tilde{G}$. The set of edges in $G$ 
between $V_1$ and $V_2$ satisfies the following rule. Each vertex in $V_1$ is adjacent to exactly half the vertices in $V_2$ 
and every vertex in $V_2$ is adjacent to exactly half the vertices in $V_1$. The edges in $\tilde{G}$ between $V_1$ and $V_2$ 
are obtained through `switching edges on and off'. Whenever there is an edge between $v_1\in V_1$ and $v_2 \in V_2$ in $G$ 
there is no edge between 
these two vertices in $\tilde{G}$. Whenever there is no edge between $v_1\in V_1$ and $v_2 \in V_2$ in $G$ there is an edge 
between these two vertices in $\tilde{G}$. That is, we switch all the edges between $V_1$ and $V_2$ to non-edges and vice versa.

In order to obtain large graph-isospectral families one has to generalize this method. One uses $k$ regular simple 
combinatorial graphs and switches edges on and off between them, see \cite{BrGoGu98} for a rigorous statement of the 
general theorem.
\end{remark}

\chapter{Quantum graphs and differential forms}

\begin{definition}
 A {\bf quantum graph} $G$ consists of the following data:
\begin{enumerate}
\item A finite combinatorial graph with edge set $E$ and vertex set $V$
\item A length function $L: E \ra \R_{>0}$ that assigns a length to each edge
\end{enumerate}
\end{definition}

Let $\{ e \sim v \}$  denote the set of edges $e$ adjacent to a vertex $v$.

Let $\mathcal{L}:=\sum\limits_{e\in E}L(e)$ denote the total edge length of the quantum graph.

\begin{remark}
 A quantum graph has a natural topology and structure as a $1$-dimensional CW complex. This gives a way to define the homology and 
cohomology of the quantum graph. 
\end{remark}

The concept of differential forms on a quantum graph was introduced in \cite{GaveauOkada91}. 

\begin{definition}
 A {\bf vector field} $X$ on $G$ consists of a vector field on each edge. We see each edge as a closed interval, that is, as a $1$-dimensional 
manifold, and use the associated notion of vector field. In particular a vector field is multivalued 
at the vertices. 
\end{definition}

Let $\nu_{v,e}$ denote the outward unit normal for the edge $e$ at the vertex $v$, where again we see the edge $e$ as a 
$1$-dimensional manifold with boundary.
Let $X_1$ be an auxiliary vector field that is real and has constant length $1$ on all edges.

\begin{definition}
A {\bf $0$-form} $f$ on $G$ is a function that is $C^{\infty}$ on the edges, that is continuous, and that satisfies the Kirchhoff boundary 
condition 
\begin{equation*}
\sum_{e \sim v }\nu_{v,e}(f|_e)=0 
\end{equation*}
at all vertices $v \in V$. We denote the space of $0$-forms by $\Lambda^0$.
\end{definition}

\begin{definition}
A {\bf $1$-form} $\alpha$ on $G$ consists of a smooth $1$-form $\alpha_e$ on each closed edge $e$ such that $\alpha$ satisfies 
the boundary condition
\begin{equation*}
 \sum_{e \sim v} \alpha_e(\nu_{v,e})=0
\end{equation*}
at all vertices $v \in V$. We denote the space of $1$-forms by $\Lambda^1$.
\end{definition}

\begin{definition}
 For a real $1$-form $\alpha$ we define the operator $d_{\alpha} : \Lambda^0 \ra \Lambda^1$ through the requirement 
\begin{equation*}
(d_{\alpha}f)(X):=X(f)+2\pi i\alpha(X)f
\end{equation*}
for all vector fields $X$. We denote the operator $d_0$ by $d$.
\end{definition}

\begin{remark}
Note that the boundary conditions for functions and $1$-forms are compatible. A function $f$ satisfies Kirchhoff 
boundary conditions if and only if $df$ satisfies the boundary condition for $1$-forms. 
\end{remark}

\begin{definition}
 We define a hermitian inner product on $\Lambda^0$ by 
\begin{equation*}
 (f,g) :=\int_G f(x)\overline{g(x)}dx
\end{equation*}
\end{definition}

\begin{definition}
\label{hermitian_inner_product}
We define a hermitian inner product on $\Lambda^1$ by 
\begin{equation*}
 (\alpha,\beta) :=\int_{G}\alpha(X_1)\overline{\beta(X_1)}dx=\sum_{e\in E}\int_0^{L(e)}\alpha_e(X_1|_e)\overline{\beta_e(X_1|_e)}dx
\end{equation*}
This is clearly independent of the choice of the auxiliary vector field $X_1$.
\end{definition}

We are now going to define the formal adjoint of $d_{\alpha}$. Formally it should satisfy
\begin{equation*}
 (d_{\alpha}^* \beta, f)=(\beta, d_{\alpha}f)
\end{equation*}
for all $f \in \Lambda^0$. We have\
\begin{eqnarray*}
 (\beta, d_{\alpha}f) &=& \int_G\beta(X_1)\overline{X_1(f)}dx - 2\pi i\int_G\beta(X_1)\overline{\alpha(X_1)f}dx \\
&=& -\int_G X_1(\beta(X_1))\overline{f}dx+\sum_{v\in V } \overline{f(v)}\sum_{e \sim v} \beta_e(\nu_{v,e})\\
&& -2\pi i\int_G\alpha(X_1)\beta(X_1)\overline{f}dx 
\end{eqnarray*}
where we used integration by parts. The sum term vanishes because of the boundary condition on $1$-forms.
So we find that $d^*_{\alpha}$ satisfies
\begin{equation*}
 d_{\alpha}^* \beta = - X_1(\beta(X_1))-2\pi i\alpha(X_1) \beta(X_1) = d^*\beta - 2\pi i \alpha(X_1)\beta(X_1)
\end{equation*}
which again is independent of the choice of $X_1$.

\begin{definition}
 For each edge $e \in E$ we define the Sobolov space $W_2(e)$ as the closure of $C^2([0,L(e)])$ with respect to the norm 
$||f||_2^2:=\sum\limits_{j=0}^2 \int\limits_0^{L(e)} |f^{(j)}(x)|^2dx$.
 
We define the global Sobolov space $W_2(G)$ as the space of all functions $f$ that are continuous on the entire graph and that 
satisfy $f|_e \in W_2(e)$ for all $e\in E$.
\end{definition}

\begin{definition}
 We define a {\bf Schr\"odinger type operator} 
\begin{equation*}
 \Delta_{\alpha}:=d_{\alpha}^* d_{\alpha}
\end{equation*}
 on $\Lambda^0$. We extend its domain to
\begin{eqnarray*}
 Dom(\Delta_{\alpha}):=\left\{ f \in W_2(G) \middle| \forall v\in V : \sum_{e \sim v}\nu_{v,e}(f|_e)=0  \right\}
\end{eqnarray*}
\end{definition}

\begin{remark}
 Note that the introduction or removal of vertices of degree $2$ would not change the space $Dom(\Delta_{\alpha})$. This justifies 
our assumption that all graphs do not have vertices of degree $2$.
\end{remark}

\begin{definition}
 We denote the the set of eigenvalues, ie the {\bf spectrum of $\Delta_{\alpha}$} including multiplicities by $Spec_{\alpha}(G)$.
\end{definition}

\begin{proposition}
\cite{Kuchment04}
 The operator $\Delta_{\alpha}$ is elliptic. The spectrum is discrete, infinite, bounded from below, with a single accumulation point 
at infinity. The multiplicity of each eigenvalue is finite.
\end{proposition}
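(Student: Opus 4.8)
The plan is to verify the four listed properties in turn, leaning on the factorization $\Delta_{\alpha}=d_{\alpha}^*d_{\alpha}$ and on the compactness of the underlying space. First, ellipticity is a local, edge-by-edge statement. Writing $a:=\alpha(X_1)$, a smooth and bounded function on each (compact) edge, one computes directly from the definitions of $d_{\alpha}$ and $d_{\alpha}^*$ given above that on each edge
\[
\Delta_{\alpha}f = -f'' - 4\pi i\,a\,f' - 2\pi i\,a'\,f + 4\pi^2 a^2 f,
\]
a second-order differential operator whose principal symbol is $\xi^2$, invertible for $\xi\neq 0$. Hence $\Delta_{\alpha}$ is elliptic.

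Second, semi-boundedness is immediate from the factorization: for $f$ in the domain,
\[
(\Delta_{\alpha}f,f) = (d_{\alpha}^*d_{\alpha}f,f) = (d_{\alpha}f,d_{\alpha}f) = \|d_{\alpha}f\|^2 \geq 0,
\]
so the spectrum lies in $[0,\infty)$ and in particular is bounded below.

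Third, and this is where the real work lies, I would establish that $\Delta_{\alpha}$ is self-adjoint with \emph{compact resolvent}. Rather than working directly with the differential operator, I would pass to the quadratic form $q(f,g):=(d_{\alpha}f,d_{\alpha}g)$ on the form domain consisting of functions that are continuous on all of $G$ and lie in the first-order Sobolev space $W_2^1(e)$ (square-integrable first derivative) on each edge $e$. This form is symmetric, non-negative, and closed; the lower-order contributions involving the smooth bounded $a$ amount to a relatively bounded perturbation of the Dirichlet form $\|f'\|^2$ and do not disturb closedness. The Friedrichs representation theorem then yields a unique non-negative self-adjoint operator associated to $q$, and integrating by parts exactly as in the computation of $d_{\alpha}^*$ above shows that the vertex sum terms vanish precisely for functions satisfying the Kirchhoff condition, so the operator produced is $\Delta_{\alpha}$ with the stated domain. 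The \textbf{main obstacle} is the compactness: one needs the form domain to embed compactly into $L^2(G)$. This holds because $G$ is a finite union of compact intervals, so edge-by-edge the embedding $W_2^1([0,L(e)])\hookrightarrow L^2([0,L(e)])$ is compact by the one-dimensional Rellich theorem, and a finite direct sum of compact embeddings is compact; imposing continuity at the vertices only restricts to a closed subspace and preserves compactness. Consequently the resolvent $(\Delta_{\alpha}+1)^{-1}$ is a compact self-adjoint operator.

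Finally, the remaining assertions follow from the spectral theorem for self-adjoint operators with compact resolvent: the spectrum consists of a sequence of real eigenvalues of finite multiplicity with no finite accumulation point, i.e. with a single accumulation point at $+\infty$. The spectrum is infinite because $L^2(G)$ is infinite-dimensional while each eigenspace is finite-dimensional, which forces infinitely many eigenvalues tending to infinity.
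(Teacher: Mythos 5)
Your proposal is correct, and it is worth noting that the paper itself contains no proof of this proposition at all: it is stated with a citation to \cite{Kuchment04}, which establishes these properties (in greater generality, for arbitrary self-adjoint vertex conditions) by essentially the route you take --- realizing the operator through its quadratic form and invoking compactness of the resolvent. So your sketch is a correct self-contained instantiation of the cited argument rather than a departure from it: the symbol computation giving ellipticity, non-negativity via $(\Delta_{\alpha}f,f)=\|d_{\alpha}f\|^2$, closedness of the form from the norm equivalence $\|f'\|\le\|d_{\alpha}f\|+2\pi\|a\|_{\infty}\|f\|$, compactness of the embedding of the form domain into $L^2(G)$ by the one-dimensional Rellich theorem on finitely many compact edges, and then the spectral theorem for self-adjoint operators with compact resolvent, with infinitude of the spectrum forced by $\dim L^2(G)=\infty$ against finite-dimensional eigenspaces. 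One point you should make explicit when identifying the form's operator with $\Delta_{\alpha}$ on the stated domain: the natural boundary condition produced by integration by parts is $\sum_{e\sim v}\bigl(\nu_{v,e}(f|_e)+2\pi i\,\alpha_e(\nu_{v,e})f(v)\bigr)=0$, and it is precisely the paper's vertex condition on $1$-forms, $\sum_{e\sim v}\alpha_e(\nu_{v,e})=0$, together with continuity of $f$, that makes the magnetic term drop out so that this reduces to the plain Kirchhoff condition defining $Dom(\Delta_{\alpha})$. With that observation supplied, the argument is complete and matches the method of the reference the paper relies on.
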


\begin{theorem}
 \cite{GaveauOkada91}
We have $H^1(G,\C)=\Lambda^1 \slash d(\Lambda^0)$. 
Thus the definitions of $1$-forms and $0$-forms produce the expected deRham cohomology.
\end{theorem}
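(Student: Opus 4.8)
The plan is to build the period map, the quantum-graph analogue of the de Rham map, and to show that it has kernel $d(\Lambda^0)$ and is surjective. Since $G$ is a $1$-dimensional CW complex there are no $2$-forms, so every element of $\Lambda^1$ is automatically ``closed'' and the only cohomology to compare against is $H^1(G,\C)\cong\mathrm{Hom}(H_1(G,\Z),\C)$. I would define $P:\Lambda^1\to\mathrm{Hom}(H_1(G,\Z),\C)$ by $P(\alpha)(\gamma)=\int_\gamma\alpha$, where the integral of a $1$-form over an oriented cycle is the signed sum of its integrals over the constituent edges. Because a graph has no $2$-cells we have $H_1=Z_1$, so $P(\alpha)$ descends to a well-defined functional on homology with no closedness hypothesis needed. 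The theorem then follows once I establish $\ker P=d(\Lambda^0)$ and surjectivity of $P$, for then $\Lambda^1/d(\Lambda^0)=\Lambda^1/\ker P\cong\mathrm{im}\,P=H^1(G,\C)$.

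For the kernel, the inclusion $d(\Lambda^0)\subseteq\ker P$ is immediate, since $\int_\gamma df=0$ for every closed loop as $f$ is single-valued and continuous on $G$. For the reverse inclusion I would fix a basepoint $x_0$ and, given $\alpha\in\ker P$, set $f(x)=\int_{x_0}^x\alpha$ along any path in $G$; the vanishing of the periods makes this path-independent and hence well-defined, and $f$ is continuous on $G$ and $C^\infty$ on each edge with $df|_e=\alpha_e$. The Kirchhoff condition is then automatic: $\sum_{e\sim v}\nu_{v,e}(f|_e)=\sum_{e\sim v}(df)(\nu_{v,e})=\sum_{e\sim v}\alpha_e(\nu_{v,e})=0$ by the boundary condition defining $\Lambda^1$, which is exactly the compatibility of the two boundary conditions noted above. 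Thus $f\in\Lambda^0$ and $\alpha=df$, giving $\ker P=d(\Lambda^0)$.

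The substantive point is surjectivity, where the boundary condition on $1$-forms must be respected. Here I would exhibit a finite-dimensional subspace of $\Lambda^1$ on which $P$ is already an isomorphism. Orienting $X_1$ along each edge, consider the space $\mathcal{H}\subset\Lambda^1$ of $1$-forms that are constant multiples of the length form on each edge, say $\alpha_e=c_e\,dx$. Writing out the defining condition $\sum_{e\sim v}\alpha_e(\nu_{v,e})=0$ shows that it becomes the requirement that the net flux of the coefficients $(c_e)$ vanish at each vertex, i.e.\ that $(c_e)$ be a $1$-cycle; hence $\mathcal{H}\cong Z_1(G,\C)=H_1(G,\C)$, of dimension equal to the first Betti number, which matches $\dim H^1(G,\C)$. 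Evaluating $P$ on $\mathcal{H}$ yields the length-weighted pairing $P(\alpha)(\gamma)=\sum_e n_e c_e L(e)$ between the cycle $\gamma=\sum_e n_e e$ and $\alpha$. This is the bilinear pairing underlying the inner product of Definition \ref{hermitian_inner_product}; restricted to the real cycle space it is positive definite, so its complexification to $Z_1(G,\C)$ is nondegenerate. Therefore $P|_{\mathcal{H}}$ is injective, hence an isomorphism by equality of dimensions, and in particular $P$ is surjective.

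The step I expect to be the main obstacle is precisely this surjectivity: one must produce $1$-forms with prescribed periods while staying inside $\Lambda^1$, and the cleanest route is the identification of the flux boundary condition with the cycle condition, which reduces the matter to nondegeneracy of the length-weighted pairing on the cycle space. The kernel computation, by contrast, is essentially the compatibility remark together with a standard ``integrate a closed form to recover its potential'' argument.
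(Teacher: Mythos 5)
Your proof is correct, and it is worth noting that the paper itself offers no proof of this statement at all --- it is quoted from \cite{GaveauOkada91} as a citation. So the only meaningful comparison is with the route in that reference, whose ingredients the paper quotes later in Chapter 6: there, a $1$-form is harmonic iff $\alpha(X_1)$ is constant on each edge, and every $\beta\in\Lambda^1$ has a unique Hodge decomposition $\beta = d\psi + \tilde\beta$ with $\tilde\beta$ harmonic, which immediately yields $\Lambda^1/d(\Lambda^0)\cong\mathcal{H}\cong Z_1(G,\C)=H^1(G,\C)$. Your $\mathcal{H}$ is exactly that space of harmonic forms (edgewise constant multiples of $dx$, with the flux condition at vertices becoming the cycle condition $\partial c = 0$), so the two arguments agree on the key identification; the difference is that you avoid proving the Hodge decomposition (an orthogonal-projection or discrete Poisson argument) and instead run everything through the period map: the kernel computation $\ker P = d(\Lambda^0)$ handles exactness via the compatibility of the two boundary conditions, and surjectivity follows from nondegeneracy of the length-weighted pairing $\sum_e n_e c_e L(e)$ on the cycle space together with the dimension count $\dim\mathcal{H}=b_1=\dim\mathrm{Hom}(H_1(G,\Z),\C)$. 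That trade is a good one here: it buys a self-contained proof using only the definitions, at the cost of not producing the canonical harmonic representative of each class, which the paper does need later (Definition \ref{inner_prod_cohomology}) and must still import from \cite{GaveauOkada91}. Two steps you leave implicit are standard but should be flagged: that integer cycles span $Z_1(G,\C)$ (so vanishing of $P(\alpha)$ on $H_1(G,\Z)$ kills $\alpha$; this is Lemma \ref{basis_of_cycles}), and that $H^1(G,\C)\cong\mathrm{Hom}(H_1(G,\Z),\C)$ by universal coefficients, with connectedness of $G$ used in the path-integration argument for the kernel.
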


\chapter{A trace formula}

In this chapter we will present the trace formula we are going to work with. Although there are many different versions of 
it (see \cite{BolteEndres08} for a survey) all of them have essentially the same structure. On the left side there 
is an infinite sum of some test function evaluated at all the eigenvalues including multiplicity. The right side contains 
a term involving the total edge length of the quantum graph, an index term that simplifies to the Euler characteristic for Kirchhoff 
boundary conditions, and an infinite sum over the periodic orbits in the quantum graph.

We will use the following.

\begin{theorem}
\cite{KottosSmilansky99}
\label{trace_formula}
 The spectrum $Spec_{\alpha}(G)=\{k_n^2\}_n$ of the operator $\Delta_{\alpha}$ determines the following exact wave trace formula.
\begin{eqnarray*}
\sum_n \delta(k-k_n)
=\frac{\mathcal{L}}{\pi}+\chi(G)\delta(k)+
\frac{1}{2\pi} \sum_{p\in PO}\left(\mathcal{A}_p(\alpha)e^{ik l_p}+\overline{\mathcal{A}_p(\alpha)}e^{-ik l_p}\right)
\end{eqnarray*}
Here the first sum is over the eigenvalues including multiplicities, the $\delta$ are Dirac $\delta$ distributions.

$\mathcal{L}$ denotes the total edge length of the quantum graph. $\chi(G)=V-E-1$ denotes the Euler characteristic.

The second sum is over all periodic orbits, $l_p$ denotes the length of a periodic orbit. 
A periodic orbit is an oriented closed walk in the quantum graph (without a fixed starting point).

The coefficients $\mathcal{A}_p(\alpha)$ are given by
\begin{equation*}
 \mathcal{A}_p(\alpha)=\tilde{l}_pe^{2\pi i\int_p\alpha}\prod_{b\in p} \sigma_{t(b)}
\end{equation*}
Here $\tilde{l}_p$ is the length of the primitive periodic orbit that $p$ is a repetition of. The $e^{2\pi i\int_p\alpha}$ is 
the phase factor or `magnetic flux'. The product is over the sequence of oriented edges or bonds in the periodic orbit. 
The coefficient $\sigma_{t(b)}$ at the terminal vertex $t(b)$ of each bond $b$ is called the vertex scattering  coefficient 
and is given by $\sigma_{t(b)}=-\delta_{t(b)}+\frac{2}{\deg(t(b))}$. Here $\delta_{t(b)}$ is defined to be equal 
to one if the periodic orbit is backtracking at the vertex $t(b)$ and zero otherwise. 
\end{theorem}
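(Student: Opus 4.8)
The plan is to derive the formula from the \emph{secular equation} for the eigenvalues, via the bond-scattering approach. First I would solve the eigenvalue equation $\Delta_\alpha f = k^2 f$ edge by edge. On a single edge $e$ of length $L(e)$ the magnetic factor can be gauged away by writing $f|_e(x) = e^{-2\pi i\int_0^x \alpha}\bigl(a_e e^{ikx} + b_e e^{-ikx}\bigr)$, so the general solution on each edge is a pair of counter-propagating plane waves. Reading these off along the two orientations of each edge, I assign to every one of the $2E$ directed edges (bonds) a single outgoing plane-wave amplitude, packaging all solutions into a vector in $\C^{2E}$.

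Next I would impose the Kirchhoff boundary conditions at each vertex $v$. Continuity together with the vanishing of the sum of inward derivatives turns the incoming amplitudes at $v$ into the outgoing ones through a vertex scattering matrix, and a direct computation gives exactly the Kirchhoff entries $\sigma^{(v)}_{ij} = \tfrac{2}{\deg(v)} - \delta_{ij}$ appearing in the statement. Combining the vertex scattering with the propagation phase $e^{ikL(e)}$ along each bond and the magnetic flux $e^{2\pi i\int_e\alpha}$ picked up in transit assembles a single unitary $2E\times 2E$ quantum evolution matrix $U(k)$. A nonzero $k$ then satisfies $k^2\in Spec_\alpha(G)$ if and only if $U(k)$ has eigenvalue $1$, i.e.\ $\det\bigl(I-U(k)\bigr)=0$, and one checks that the algebraic multiplicity of the zero matches the spectral multiplicity.

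The heart of the argument is to convert this secular equation into a density of states. Using unitarity of $U(k)$ to factor $\det\bigl(I-U(k)\bigr) = \det U(k)\,\overline{\det\bigl(I-U(k)\bigr)}$, I would deduce that the phase of the secular determinant grows on average like $k\mathcal{L}$ — since $\det U(k) = e^{2ik\mathcal{L}}$ times a $k$-independent phase, the diagonal length matrix contributing $e^{ik\sum_b L(b)}=e^{2ik\mathcal{L}}$ once both orientations of every edge are summed — while jumping at each eigenvalue $k_n$. The smooth growth produces the Weyl term $\mathcal{L}/\pi$ and the jumps reproduce $\sum_n\delta(k-k_n)$, so that $\sum_n\delta(k-k_n)-\tfrac{\mathcal{L}}{\pi}$ equals the derivative of the fluctuating part of the phase. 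The zero modes at $k=0$, governed by the kernel of $U(0)$, contribute the topological term $\chi(G)\delta(k)$.

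For the oscillatory part I would expand $\log\det\bigl(I-U(k)\bigr) = -\sum_{n\ge 1}\tfrac{1}{n}\operatorname{tr}\bigl(U(k)^n\bigr)$ and interpret $\operatorname{tr}\bigl(U(k)^n\bigr)$ as a sum over closed bond sequences of topological length $n$; each such closed walk contributes precisely $e^{ikl_p}$, the flux $e^{2\pi i\int_p\alpha}$, and the product $\prod_{b\in p}\sigma_{t(b)}$ of vertex scattering coefficients. Regrouping these closed walks into periodic orbits with unmarked starting point and accounting for repetitions converts the $1/n$ into the primitive-length prefactor $\tilde l_p$, yielding the amplitudes $\mathcal{A}_p(\alpha)$; taking the imaginary part and differentiating in $k$ then reproduces the periodic-orbit sum with the stated $\pm ikl_p$ exponentials. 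The main obstacle is analytic rather than algebraic: the periodic-orbit series converges only as a distribution, so the interchange of the logarithmic expansion, the trace, and the derivative must be justified in the distributional sense, and the bookkeeping that turns $1/n$ together with the repetition count into the clean factor $\tilde l_p$ — while simultaneously isolating the smooth Weyl term and the $k=0$ term without double counting — is where the care is required.
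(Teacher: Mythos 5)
Your proposal follows exactly the same route as the paper's own (sketched) proof, which summarizes Kottos--Smilansky: gauge away the magnetic potential edgewise, impose Kirchhoff conditions to get the bond-scattering matrix $S(k,\alpha)=D(k,\alpha)T$ with the stated vertex coefficients, obtain the secular equation $\det\bigl(Id-S(k,\alpha)\bigr)=0$, and extract the eigenvalue counting distribution from the phase of the secular determinant. The only difference is one of completeness rather than method: where the paper stops at ``a series of sophisticated and clever algebraic manipulations,'' you correctly sketch them --- the unitarity factorization giving the Weyl term $\mathcal{L}/\pi$ from $\det U(k)=e^{2ik\mathcal{L}}$ (the fluxes over opposite orientations cancelling), the $k=0$ kernel giving $\chi(G)\delta(k)$, and the expansion $\log\det(I-U)=-\sum_n \tfrac1n\operatorname{tr}(U^n)$ regrouped into periodic orbits with the $1/n$ absorbed into $\tilde l_p$ --- while rightly flagging the distributional convergence and multiplicity-matching issues as the points needing care.
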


\begin{proof}
 We will not give a complete proof of the trace formula here but merely give a sketch of the proof and provide 
some of the key ideas that go into proving the trace formula. Our presentation here summarizes the proof given in 
\cite{KottosSmilansky99}.

We will fix an orientation and a parametrization for the edges, that is we consider a directed quantum graph. Then every 
bond $b=(v,v')$ has a natural orientation from $v$ to $v'$ and a reversed orientation from $v'$ to $v$ denoted by $\overline{b}$.
We identify the bond $b$ with the interval $[0,L(b)]$. Let $X_b$ be the vector field of unit length along the bond $b$ that 
points in the direction of the terminal vertex of $b$.

We first observe that all eigenfunctions of the Schr\"odinger operator $\Delta_{\alpha}$ are sine waves on the individual edges. 
The eigenfunctions are always of the form
\begin{align*}
 \psi_b(x)= e^{-2\pi i \int_0^{x}\alpha_b}\left( a_b e^{ikx} + \tilde{a}_b e^{-ikx}\right)
\end{align*}
for some parameters $a_b$ and $\tilde{a}_b$. Note that we always have $\psi_b(x)= \psi_{\overline{b}}(L(b)-x)$. 
The value $k^2$ is an eigenvalue of the quantum graph whenever there exists a choice of the values of the parameters 
$a_b$ and $\tilde{a}_b$ on all the bonds such that the Kirchhoff boundary conditions are satisfied at all the vertices. 
This gives rise to a system of $2E$ linear equations that can be written in finite dimensional matrix form. This is the key 
step where quantum graphs behave better than arbitrary manifolds, this reduction to a finite problem is ultimately the reason 
why we have an exact trace formula for quantum graphs instead of just an asymptotic approximation.
We let
\begin{align*}
 S(k,\alpha):=D(k,\alpha)T
\end{align*}
 Here $D(k,\alpha)_{bb'}:=\delta_{bb'}e^{ikL(b)-2\pi i\int_0^{L(b)} \alpha_b}$ is a diagonal matrix that encodes the metric 
structure and the $1$-form $\alpha$. The matrix $T$ encodes the combinatorial structure of the graph. If the terminal vertex of 
the bond $b$ is the initial vertex of the bond $b'$ we set $T_{bb'}:=\sigma_{t(b)}=-\delta_{t(b)}+\frac{2}{\deg(t(b))}$ to be the 
vertex scattering coefficient at the vertex $t(b)$, here $\delta_{t(b)}$ is equal to $1$ if $b =\overline{b'}$ and 
$0$ otherwise; we set $T_{bb'}:=0$ otherwise.
The eigenvalues are now given by the secular equation
\begin{align*}
 \zeta(k):=\det (Id_{2n} - S(k,\alpha)) =0
\end{align*}
Note that this equation is also used for numeric computations of the eigenvalues.
We can now apply a counting operation to $\zeta(k)$, we define
\begin{align*}
d(k)&:=-\frac{1}{\pi}\lim_{\varepsilon \ra 0} Im \frac{\partial}{\partial k}\log \zeta(k+i\varepsilon)\\
 N(k')&:=\int_{k_0-\varepsilon'}^{k'}d(k)dk
\end{align*}
One can show using the Taylor expansion of $\zeta$ that $N(k')$ counts the zeros of $\zeta(k)$ (including multiplicities) in 
the interval $[k_0,k')$. In other words, as a distribution we have 
\begin{align*}
 d(k)=\sum_n \delta(k-k_n)
\end{align*}
This means we have found the following distributional equality.
\begin{align*}
 \sum_n \delta(k-k_n) = 
-\frac{1}{\pi}\lim_{\varepsilon \ra 0} Im \frac{\partial}{\partial k}\log \det \left(Id_{2n} - S(k+i\varepsilon,\alpha)\right)
\end{align*}
 It equates a sum over the eigenvalues with a quantity that solely depends 
on the combinatorial and metric features of the quantum graph. To get the expression of the trace formula as stated above 
one has to perform a series of sophisticated and clever algebraic manipulations.
\end{proof}

\begin{remark}
\label{contractible}
The phase factor $e^{2\pi i\int_p\alpha}$ of a periodic orbit only depends on its homology class by Stokes theorem. For a 
contractible periodic orbit it is equal to $1$. 
\end{remark}

\begin{corollary}
\label{fourier_trace_formula}
\cite{KottosSmilansky99}
The Fourier transform of this trace formula is given by:
\begin{equation*}
\sum_n e^{-il\lambda_n}=2\mathcal{L}\delta(l)+\chi(G)+ 
\sum_{p\in PO}\mathcal{A}_p(\alpha)\delta(l-l_p)+\overline{\mathcal{A}_p(\alpha)}\delta(l+l_p)
\end{equation*} 
\end{corollary}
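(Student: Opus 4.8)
The plan is to obtain the stated identity simply by applying the Fourier transform to the distributional equality of Theorem~\ref{trace_formula}, working in the variable $k$ and passing to the dual variable $l$. Both sides of that equality are tempered distributions, so the Fourier transform is a well-defined continuous linear operator on them, and one may transform term by term.

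First I would fix a convention for the transform, say $\hat{g}(l)=\int_{\R} g(k)e^{-ikl}\,dk$, and record the three elementary distributional transforms that do all the work: the constant function $1$ transforms to $2\pi\delta(l)$, the Dirac distribution $\delta(k)$ transforms to the constant $1$, and the exponential $e^{\pm ikl_p}$ transforms to $2\pi\,\delta(l\mp l_p)$. With this convention the left-hand side $\sum_n\delta(k-k_n)$ transforms to $\sum_n e^{-ik_nl}$, which is exactly the left-hand side of the corollary (the $\lambda_n$ there denoting the wave numbers $k_n$).

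Then I would transform the right-hand side of Theorem~\ref{trace_formula} one summand at a time. The term $\mathcal{L}/\pi$ becomes $(\mathcal{L}/\pi)\cdot 2\pi\delta(l)=2\mathcal{L}\delta(l)$; the term $\chi(G)\delta(k)$ becomes the constant $\chi(G)$; and inside the periodic-orbit sum the summand $\frac{1}{2\pi}\mathcal{A}_p(\alpha)e^{ikl_p}$ becomes $\mathcal{A}_p(\alpha)\delta(l-l_p)$, while $\frac{1}{2\pi}\overline{\mathcal{A}_p(\alpha)}e^{-ikl_p}$ becomes $\overline{\mathcal{A}_p(\alpha)}\delta(l+l_p)$, the prefactors of $\frac{1}{2\pi}$ cancelling against the factor $2\pi$ produced by each exponential. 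Collecting these four contributions yields precisely the asserted formula.

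The only genuine point requiring care --- and the step I expect to be the main obstacle --- is justifying that the Fourier transform commutes with the infinite sum over periodic orbits. I would handle this by reading the whole statement weakly: pair both sides against a Schwartz test function $\varphi$ and move the transform onto the test function via $\langle\hat{T},\varphi\rangle=\langle T,\hat{\varphi}\rangle$. Since $\hat{\varphi}$ is again Schwartz and the lengths $l_p$ have no finite accumulation point (for each bound there are only finitely many periodic orbits of length below it, as the edge lengths are bounded below), the pairing of the periodic-orbit sum against $\hat{\varphi}$ converges absolutely, which legitimizes the term-by-term transformation and the interchange of summation and transform. Everything else is the bookkeeping of the factors of $2\pi$ attached to the chosen convention.
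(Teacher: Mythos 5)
Your term-by-term computation is exactly the intended derivation, and there is no paper-internal proof to compare it with: the paper states this corollary with only the citation to \cite{KottosSmilansky99}. The bookkeeping all checks out --- with the convention $\hat{g}(l)=\int g(k)e^{-ikl}\,dk$ the constant $\mathcal{L}/\pi$ yields $2\mathcal{L}\delta(l)$, the term $\chi(G)\delta(k)$ yields the constant $\chi(G)$, each $\frac{1}{2\pi}\mathcal{A}_p(\alpha)e^{ikl_p}$ yields $\mathcal{A}_p(\alpha)\delta(l-l_p)$ and its conjugate partner yields $\overline{\mathcal{A}_p(\alpha)}\delta(l+l_p)$ --- and your reading of $\lambda_n$ as the wave numbers $k_n$ is the one consistent with Theorem \ref{trace_formula}, where $Spec_{\alpha}(G)=\{k_n^2\}_n$.

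The one step you flagged as the main obstacle is, however, where your justification actually fails as written. Absolute convergence of $\sum_p \lvert\mathcal{A}_p(\alpha)\rvert\,\lvert\hat{\varphi}(l_p)\rvert$ for an arbitrary Schwartz test function $\varphi$ does \emph{not} follow from the local finiteness of the length spectrum: the number of periodic orbits of length at most $L$ grows exponentially in $L$ (the number of admissible bond sequences of combinatorial length $m$ grows geometrically, and the minimum edge length only converts combinatorial length to metric length linearly), whereas Schwartz decay is merely super-polynomial; the bound $\lvert\mathcal{A}_p\rvert\le l_p$ does not rescue the sum. Two standard repairs are available. Either interpret the identity in $\mathcal{D}'(\R)$, pairing against compactly supported smooth test functions in the $l$ variable, so that only the finitely many $l_p$ in the support contribute and the interchange of sum and transform is trivial --- this is in fact how the paper later uses the corollary, treating the transformed trace formula as a locally finite sum of Dirac deltas plus a constant term in the proof of Theorem \ref{Bloch_H1}; or retain the $k+i\varepsilon$ regularization that appears in the sketch of the proof of Theorem \ref{trace_formula}, which damps the orbit sum by $e^{-\varepsilon l_p}$ and makes it absolutely convergent before taking the distributional limit $\varepsilon\to 0$. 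With either repair in place of the Schwartz-class argument, your proof is complete.
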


\chapter{Finiteness of quantum isospectrality}

The goal of this chapter is to prove that any family of quantum graphs whose Laplace operators are isospectral 
is finite, and to provide an upper bound for the size of isospectral families.

We will analyze the trace formula for $\Delta_0$ from Theorem \ref{trace_formula}.
First we need a lemma that will guarantee the non-vanishing of certain terms in the trace formula. 

\begin{lemma}
\label{positivity}
Assume $G$ is a leafless quantum graph. Then we have
\begin{enumerate}
 \item If $p$ is a periodic orbit with an even number of backtracks then $\mathcal{A}_p > 0$.
\item If $p$ is a periodic orbit with an odd number of backtracks then $\mathcal{A}_p < 0$.
\end{enumerate}
\end{lemma}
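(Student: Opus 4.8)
The plan is to read off the sign of $\mathcal{A}_p$ directly from its defining formula, once we specialize to the standard Laplacian $\Delta_0$ (that is, $\alpha = 0$, which is the operator under consideration throughout this chapter). For $\alpha = 0$ the magnetic flux $e^{2\pi i\int_p\alpha}$ equals $1$, so by Theorem \ref{trace_formula} the coefficient reduces to the real quantity
\begin{equation*}
\mathcal{A}_p = \tilde{l}_p \prod_{b \in p} \sigma_{t(b)}, \qquad \sigma_{t(b)} = -\delta_{t(b)} + \frac{2}{\deg(t(b))}.
\end{equation*}
Since $\tilde{l}_p$ is a genuine length it is strictly positive, so the sign of $\mathcal{A}_p$ is exactly the sign of the product of scattering coefficients. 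The whole argument therefore comes down to controlling the sign of each factor $\sigma_{t(b)}$.

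First I would determine the sign of a single factor. The key input is that every vertex of $G$ has degree at least $3$: leaflessness rules out degree $1$, and vertices of degree $2$ are excluded by our standing convention. At a bond $b$ where the orbit does not backtrack we have $\delta_{t(b)} = 0$, so $\sigma_{t(b)} = 2/\deg(t(b)) > 0$. At a bond where the orbit does backtrack we have $\delta_{t(b)} = 1$, so
\begin{equation*}
\sigma_{t(b)} = -1 + \frac{2}{\deg(t(b))} = \frac{2 - \deg(t(b))}{\deg(t(b))},
\end{equation*}
which is strictly negative precisely because $\deg(t(b)) \geq 3$. In particular every factor is nonzero, and a factor is negative if and only if the orbit backtracks at the terminal vertex of the corresponding bond.

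Finishing the proof is then immediate: the sign of $\prod_{b\in p}\sigma_{t(b)}$ equals $(-1)^{m}$, where $m$ is the number of backtracks of $p$, so multiplying by the positive length $\tilde{l}_p$ yields $\mathcal{A}_p > 0$ when $m$ is even and $\mathcal{A}_p < 0$ when $m$ is odd.

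I do not expect a genuine obstacle here, as the statement is a direct sign computation; the one point that genuinely matters is the degree bound. The content of the lemma lives entirely in the hypothesis that $G$ is leafless, together with the no-degree-$2$ convention: these are exactly what forces the backtracking coefficient $(2-\deg)/\deg$ to be strictly negative rather than zero or positive. Were a vertex of degree $1$ or $2$ present, a backtrack there would contribute a positive or vanishing factor, and the clean parity dictated by the number of backtracks would break down.
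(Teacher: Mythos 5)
Your proof is correct and follows essentially the same route as the paper's: specialize to $\alpha=0$ so the flux term is $1$, note $\tilde{l}_p>0$, and observe that $\deg(v)\ge 3$ (leaflessness plus the no-degree-$2$ convention) makes each non-backtracking factor $2/\deg(t(b))$ positive and each backtracking factor $-1+2/\deg(t(b))$ strictly negative, so the sign is the parity of the number of backtracks. Your remark isolating the degree bound as the real content matches the paper's reasoning exactly.
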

\begin{proof}
The coefficients $\mathcal{A}_p$ for the standard Laplacian are given by
\begin{equation*}
 \mathcal{A}_p=\tilde{l}_p\prod_{b\in p} \sigma_{t(b)}
\end{equation*}
The `magnetic flux' term is just $1$ in this case.

The length of a periodic orbit is always strictly positive and thus has no influence on the sign of $\mathcal{A}_p$. The vertex 
scattering coefficient is $\frac{2}{\deg{v}}$ if the periodic orbit is not back scattering and $-1+\frac{2}{\deg{v}}$ if it is 
back scattering. As we assumed $deg(v) \ge 3$ the sign of the product is equal to the parity of the number of back tracks.
\end{proof}

\begin{remark}
This means that the coefficient for a single periodic orbit is always nonzero. In general however in can happen that several different 
periodic orbits have 
exactly the same length and the sum of their coefficients is zero so one would not see them in the trace formula. 
It is shown in \cite{GutkinSmilansky01} that quantum graphs with rationally independent edge lengths are spectrally 
determined. It is one of the key steps in 
their proof to identify the set of edge lengths through the periodic orbits that just backtrack twice on a single edge. The next 
example shows that this step fails if the rational independence hypothesis is dropped.
\end{remark}

\begin{example}

The quantum graph in Figure \ref{qgraph_vanish_trace} has an edge of length $\frac{23}{10}$. In order to see this from the 
trace formula one would 
look for periodic orbits of length $\frac{23}{5}$. However there are multiple periodic orbits of length $\frac{23}{5}$ and the sum 
of all their coefficients vanishes.

\begin{figure}[ht]
\centering
\scalebox{1.1}{\includegraphics{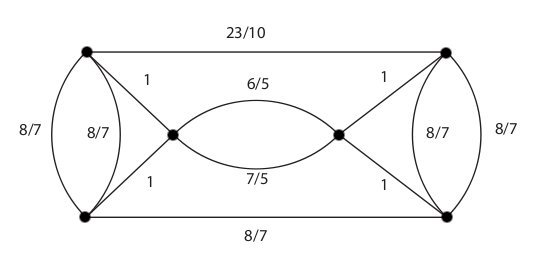}}
\caption{A quantum graph with vanishing terms in the trace formula}
\label{qgraph_vanish_trace}
\end{figure}

A periodic orbit of length $\frac{23}{5}$ could either contain two edges of length $\frac{23}{10}$ or two edges of length $1$, one edge 
of length $\frac{6}{5}$ and one edge of length $\frac{7}{5}$ or finally one edge of length $1$ and three edges of length 
$\frac{6}{5}$. There is one periodic orbit containing two edges of length $\frac{23}{10}$, its coefficient is 
$\mathcal{A}_{p}=\frac{23}{5} \cdot (-\frac{1}{2})(-\frac{1}{2})=\frac{23}{20}$. There are four periodic orbits of the second type, 
all of them have coefficient $\mathcal{A}_p=\frac{23}{5} \cdot (\frac{1}{2})^3(-\frac{1}{2})=-\frac{23}{80}$. There are no 
periodic orbits of the third type. Thus the sum of all coefficients of periodic orbits of length $\frac{23}{5}$ is zero. Thus 
one would not notice the existence of an edge of length $\frac{23}{10}$ simply by looking for periodic orbits of length $\frac{23}{5}$. 
As there are other periodic orbits in the quantum graph that contain this edge one might infer its existence from these.

Although the given example is not a simple graph it is possible to construct examples of this phenomena with simple graphs.
\end{example}

\begin{remark}
\label{loop_half_count}
Loops present a minor technical difficulty for some of the arguments in this chapter. Whenever a quantum graph has a loop of length 
$l$ we will count it as two edges of length $l/2$. This comes from the fact that we make statements about the shortest edge length 
but what we show are statements about the shortest periodic orbit in the quantum graph. If a graph does not have loops the shortest 
periodic orbit has length equal to twice the shortest edge length. If an edge is a loop the corresponding periodic orbit 
only has the length equal to once the length of this loop.

This mirrors a similar special treatment of loops in combinatorial graphs, see 
Definition \ref{weight_function}. Using this convention we can still say that any periodic orbit contains at least two edges.
\end{remark}

\begin{lemma}
\label{same_minimal_length}
All leafless quantum graphs isospectral to a given quantum graph have the same shortest edge length.
The multiplicity of that edge length may vary.
\end{lemma}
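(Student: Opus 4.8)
The plan is to read the shortest edge length off the singular part of the wave trace, viewed as a distribution. Since the spectrum of $\Delta_0$ determines the left-hand side $\sum_n e^{-il\lambda_n}$ of the Fourier-transformed trace formula in Corollary \ref{fourier_trace_formula}, two isospectral leafless quantum graphs produce the same distribution on the right-hand side. For the standard Laplacian the magnetic flux is trivial, so each $\mathcal{A}_p$ is real, and for every $L>0$ the total coefficient $\sum_{p\,:\,l_p=L}\mathcal{A}_p$ of the Dirac mass at $l=L$ is a spectral invariant. Hence the set of lengths $L>0$ at which this total coefficient is nonzero is spectrally determined, and I would prove that the smallest such $L$ equals twice the shortest edge length $\ell_{\min}$.

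First I would locate the shortest periodic orbit. With the loop convention of Remark \ref{loop_half_count}, every periodic orbit traverses at least two edges, each of length at least $\ell_{\min}$, so every orbit has length at least $2\ell_{\min}$; in particular nothing in $(0,2\ell_{\min})$ lies in the support. This bound is attained: for any non-loop edge of length $\ell_{\min}$ the orbit that runs along it and immediately returns has length $2\ell_{\min}$, and for a shortest loop (of geometric length $2\ell_{\min}$) the orbit that goes once around does as well. Thus $2\ell_{\min}$ is exactly the shortest occurring orbit length.

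The key step is to show the total coefficient at $L=2\ell_{\min}$ does not vanish. Any periodic orbit of length exactly $2\ell_{\min}$ must consist of at most two bond traversals of shortest edges forming a closed walk, and a short case analysis exhausts the possibilities: running back and forth along one shortest edge (two backtracks), traversing two distinct parallel shortest edges between a common pair of vertices (no backtracks), or going once around a shortest loop (no backtracks). In every case the number of backtracks is even, so by Lemma \ref{positivity} each such orbit has $\mathcal{A}_p>0$; with all summands strictly positive no cancellation can occur and $\sum_{p\,:\,l_p=2\ell_{\min}}\mathcal{A}_p>0$. Therefore $2\ell_{\min}$ is the smallest positive point carrying a Dirac mass, hence a spectral invariant, and dividing by two shows $\ell_{\min}$ is spectrally determined. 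This invariance of sign, forcing positivity and ruling out cancellation, is precisely where the leafless hypothesis (every vertex of degree at least $3$) is essential and is the main obstacle; the cautionary Example above shows that without it, coefficients at a fixed length can cancel and the argument breaks. Finally, the precise value of the nonzero coefficient depends on the degrees of the endpoints and on how many shortest edges occur, so it may differ between isospectral graphs, which is why only the length, and not its multiplicity, is claimed to be invariant.
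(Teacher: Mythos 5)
Your proposal is correct and follows essentially the same route as the paper's proof: identify the shortest periodic orbit length as $2\ell_{\min}$ (using the loop convention of Remark \ref{loop_half_count}), observe via the case analysis that every orbit of that length has an even number of backtracks, and invoke Lemma \ref{positivity} to conclude all coefficients $\mathcal{A}_p$ at that length are positive, so no cancellation occurs in the trace formula. Your write-up is somewhat more detailed than the paper's (which compresses the case analysis into two sentences), but the key ideas are identical.
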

\begin{proof}
This follows from Lemma \ref{positivity}. The shortest periodic orbit in a quantum graph consists of twice the shortest edge length. If 
there are several edges of the same shortest edge length there might be several periodic orbits of minimum length but all of them 
have either two or zero backtracks. The zero backtrack case 
only happens if there is a double edge where both edges have the shortest edge length. Thus all the $\mathcal{A}_p$ coefficients of 
these periodic orbits will be positive and their sum cannot vanish.
\end{proof}

\begin{remark}
If we allow quantum graphs with leaves, the coefficients $\mathcal{A}_p$ for the shortest periodic orbits can be positive or 
negative. It is 
not hard to construct an example where the sum of these coefficients cancels out to zero. This means there might be isospectral 
quantum graphs with leaves with different minimum edge lengths.
\end{remark}

\begin{lemma}
\label{half_linear_combination}
Let $G$ and $G'$ be two leafless quantum graphs that are isospectral. Then all edge lengths of $G'$ 
are $\frac{1}{2}\N$-linear combinations of edge lengths occurring in $G$. 
\end{lemma}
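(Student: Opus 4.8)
The plan is to reduce the statement to one about the lengths of periodic orbits and then run a strong induction. Applying the Fourier-transformed trace formula (Corollary \ref{fourier_trace_formula}) with $\alpha=0$, and writing $c(l):=\sum_{p\in PO:\,l_p=l}\mathcal{A}_p$ for the (real, since $\alpha=0$) total coefficient of the orbits of length $l$, isospectrality of $G$ and $G'$ forces the function $c$ to be the same whether computed from $G$ or from $G'$. Set $\Lambda_G:=\{\sum_{e\in E(G)}n_eL(e)\ :\ n_e\in\N\}$, the additive semigroup of $\N$-combinations of the edge lengths of $G$, and let $\mathcal{P}_{G'}$ denote the set of lengths of periodic orbits of $G'$. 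Since the length of every periodic orbit of $G$ is a sum of edge lengths of $G$, every orbit length of $G$ lies in $\Lambda_G$. I claim it suffices to prove the inclusion $\mathcal{P}_{G'}\subseteq\Lambda_G$: the orbit that runs back and forth along an edge $e'$ of $G'$ has length $2L'(e')\in\mathcal{P}_{G'}$, so the inclusion would give $2L'(e')\in\Lambda_G$, that is $L'(e')\in\frac{1}{2}\Lambda_G$, which is exactly the assertion that each edge length of $G'$ is a $\frac{1}{2}\N$-linear combination of edge lengths of $G$.

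I would prove $\mathcal{P}_{G'}\subseteq\Lambda_G$ by strong induction on $m\in\mathcal{P}_{G'}$. Because $G'$ has only finitely many edges, all of positive length, $\mathcal{P}_{G'}$ is a discrete, well-ordered subset of $\R_{>0}$, so the induction is legitimate. For the inductive step I split on the common value $c(m)$. If $c(m)\neq 0$, then computing $c(m)$ from $G$ shows that $G$ possesses at least one periodic orbit of length $m$; hence $m$ is a sum of edge lengths of $G$ and $m\in\Lambda_G$, as desired. (The base case, the shortest element of $\mathcal{P}_{G'}$, is automatically of this type, since the alternative case below would require strictly shorter orbits; this is consistent with Lemma \ref{same_minimal_length}.)

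The remaining case $c(m)=0$ is where the cancellation phenomenon of the Figure \ref{qgraph_vanish_trace} example occurs, and it is the main obstacle. Here I would combine Lemma \ref{positivity} with a decomposition of orbits. An orbit of length $m$ that has no backtrack (an oriented cycle) or that is a single back-and-forth along one edge has an even number of backtracks, hence positive coefficient $\mathcal{A}_p$. If every orbit of length $m$ were of one of these two types, their coefficients could not cancel, contradicting $c(m)=0$; therefore there is an orbit $q$ of length $m$ containing a backtrack whose excision leaves a nonempty closed walk. Excising that back-and-forth (equivalently, splitting $q$ at the repeated vertex) expresses $q$ as two closed walks of lengths $m_1,m_2\in\mathcal{P}_{G'}$ with $m_1+m_2=m$ and $m_1,m_2<m$. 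By the inductive hypothesis $m_1,m_2\in\Lambda_G$, and since $\Lambda_G$ is closed under addition, $m=m_1+m_2\in\Lambda_G$. This closes the induction and yields the lemma. The delicate point, and the step I expect to require the most care, is precisely this guarantee, supplied by the sign information in Lemma \ref{positivity}, that whenever the coefficients at a length $m$ cancel there must exist a \emph{decomposable} orbit of that length, so that the problem genuinely reduces to strictly shorter orbit lengths rather than merely to the doubled edge lengths (which would lose the factor of $2$ we need).
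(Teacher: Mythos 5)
Your proof is correct, but it takes a genuinely different route from the paper's. The paper argues by contradiction via a minimal counterexample: it considers the shortest length $L$ of a periodic orbit of $G'$ involving an edge length that is \emph{not} a $\frac{1}{2}\N$-combination of $G$'s lengths, classifies all orbits of length $L$ (either twice a new edge with zero or two backtracks, or a minimal orbit containing the new length once, which by minimality must be a cycle and hence backtrack-free), invokes Lemma \ref{positivity} to conclude that the coefficients at $L$ are all positive and cannot cancel, and then observes that $G$'s trace formula has no term at $L$ --- contradicting isospectrality. You instead run a strong induction over the (well-ordered, locally finite) set of orbit lengths of $G'$ and prove the \emph{stronger} statement $\mathcal{P}_{G'}\subseteq\Lambda_G$: when $c(m)\neq 0$, matching the trace formulas forces $G$ to have an orbit of length $m$; when $c(m)=0$, Lemma \ref{positivity} forces the existence of a negative-coefficient (odd-backtrack) orbit, which cannot be a single back-and-forth, so excising a backtrack splits $m$ into two strictly shorter orbit lengths of $G'$ and the semigroup property of $\Lambda_G$ closes the induction; the base case is handled since the splitting would contradict minimality. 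Your use of positivity is thus different in spirit: the paper uses it to show non-cancellation at one carefully chosen length, while you use it to show that cancellation at \emph{any} length forces decomposability. What your route buys is a cleaner reduction (you avoid the paper's somewhat delicate classification step that a minimal orbit containing a new length once must be a backtrack-free cycle) and a genuinely stronger conclusion --- every periodic-orbit length of $G'$, not just each doubled edge length, is an $\N$-combination of $G$'s edge lengths --- from which the lemma follows by applying the inclusion to the back-and-forth orbit $2L'(e')$. Two cosmetic slips that do not affect validity: a backtrack-free closed walk need not be an oriented cycle (it may repeat vertices, as in a figure-eight), and your two listed ``positive types'' do not exhaust the even-backtrack orbits (e.g.\ a back-and-forth along a two-edge path has two backtracks but is of neither type); however, your argument only needs the valid implication that if \emph{all} orbits at length $m$ were of those two types then $c(m)>0$, whose contrapositive yields exactly the decomposable orbit you require.
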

\begin{proof}
We will show this by contradiction.
Clearly all periodic orbits in $G$ have lengths that are $\N$-linear combinations of the edge lengths occurring in $G$. Thus all 
terms in the trace formula for $G$ will occur at lengths that are $\N$-linear combinations of the edge lengths occurring in $G$.

Suppose  $G'$ has edge lengths that are not $\frac{1}{2}\N$-linear combinations of edge lengths occurring in $G$.
Consider the shortest length $L$ of periodic orbits that involve at least one edge length that is not a $\frac{1}{2}\N$-linear 
combination of edge lengths occurring in $G$. Any periodic orbit of length $L$ contains exactly one edge length that
is not a $\frac{1}{2}\N$-linear combination of edge lengths occurring in $G$. If it would involve two distinct new lengths 
the periodic orbit consisting of twice the shorter of the two new length would be shorter. There are two possibilities of 
what the periodic orbits of length $L$ can look like. Either they consist of twice the same new edge length with two or zero 
backtracks, or they form a closed walk in the graph that contains a new edge length only once. If the periodic orbit 
contains a new edge length only once it has to be homologous to a cycle in the graph, as it has minimum length it is 
this cycle and thus contains no back tracks. Note that both cases can occur at the same time involving different new 
edge lengths.
Thus we have shown that any periodic orbit of length $L$ has an even number 
of back tracks and thus a positive coefficient $\mathcal{A}_p$ by Lemma \ref{positivity} and the trace formula of $G'$ will 
have a non-vanishing coefficient at length $L$.
The length $L$ is not an $\N$-linear combinations of the edge lengths occurring in $G$ in either of our two cases, thus the 
trace formula for $G$ does not involve a term at length $L$. Therefore the trace formulas for $G$ and $G'$ do not match 
and the quantum graphs are not isospectral.
\end{proof}

\begin{remark}
 There are quantum graphs with edges of integer and half integer length such that all periodic orbits have 
integer length. Figure \ref{integer_periodic_orbits} shows an example of such a quantum graph.
 
\begin{figure}[ht]
\centering
\scalebox{0.9}{\includegraphics{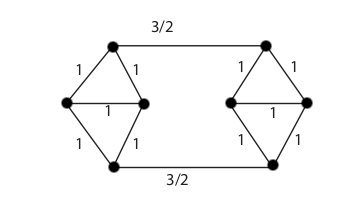}}
\caption{A quantum graph with half integer edge length and only integer length periodic orbits}
\label{integer_periodic_orbits}
\end{figure}
\end{remark}

\begin{remark}
This lemma fails for graphs with degree 1 vertices or for a more general operator of the form $d_{\alpha}^*d_{\alpha}$. 
For the standard Laplacian on arbitrary quantum graphs one can show a similar lemma saying that all edge lengths are 
$\frac{1}{2^k}\N$-linear combinations of 
the edge lengths occurring in $G$ for some $k$ but it is not clear whether there is a bound on $k$.
For a general operator, this kind of argument does not imply any restrictions because we have no information about the coefficients 
of periodic orbits that are cycles in the graph.
\end{remark}

\begin{theorem}
\label{finite_isospectrality}
All families of leafless quantum graphs that are isospectral for the standard Laplacian are finite.
\end{theorem}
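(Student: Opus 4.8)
The plan is to combine the previous lemmas to pin down, for any quantum graph $G'$ isospectral to a fixed leafless quantum graph $G$, a finite list of possibilities for both its edge lengths and its underlying combinatorial structure. By Lemma \ref{same_minimal_length} all graphs in the isospectral family share the same shortest edge length, which we normalize to $1$. By Lemma \ref{half_linear_combination}, every edge length of $G'$ is a $\frac{1}{2}\N$-linear combination of the edge lengths occurring in $G$. Since $G$ is fixed, its set of edge lengths is a fixed finite set $\{\ell_1,\dots,\ell_m\}$, and the total edge length $\mathcal{L}$ is spectrally determined (it appears as the leading term of the trace formula in Theorem \ref{trace_formula}). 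Hence every edge of $G'$ has length in the fixed set
\begin{equation*}
S=\left\{ \tfrac{1}{2}\sum_{j=1}^m n_j \ell_j \;\middle|\; n_j\in\N,\ \tfrac{1}{2}\sum_j n_j\ell_j \le \mathcal{L} \right\},
\end{equation*}
which is finite because the total edge length bounds each individual length from above and the minimum edge length $1$ bounds the coefficients.

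Next I would bound the number of edges and vertices of $G'$. Each edge of $G'$ has length at least $1$, so $G'$ has at most $\mathcal{L}$ edges; since $G'$ is leafless (every vertex has degree at least $3$), the handshake lemma gives $3|V| \le 2|E| \le 2\mathcal{L}$, so the number of vertices is also bounded by a function of $\mathcal{L}$ alone. Thus every member of the isospectral family is a quantum graph whose underlying combinatorial graph has a bounded number of vertices and edges, and whose edge lengths are drawn from the fixed finite set $S$. There are only finitely many combinatorial graphs on at most $\frac{2}{3}\mathcal{L}$ vertices and at most $\mathcal{L}$ edges, and for each such graph only finitely many ways to assign lengths from $S$ to its edges. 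Therefore the entire isospectral family is a subset of a finite set of quantum graphs, proving finiteness.

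The main obstacle is making sure Lemma \ref{half_linear_combination} genuinely applies to produce a \emph{finite} list rather than merely a countable one: the lemma constrains the edge lengths to lie in a lattice-like set, but finiteness only follows once we use the spectrally determined total edge length $\mathcal{L}$ to cap both the individual lengths and the number of edges simultaneously. One must verify that the coefficients $n_j$ cannot grow unboundedly, which is exactly where the normalization of the minimum length to $1$ is essential: without a positive lower bound on edge lengths, a fixed total length could be split into arbitrarily many short edges, and the combinatorial structure could not be bounded. Care is also needed because Lemma \ref{half_linear_combination} is stated for one fixed $G$ and one isospectral $G'$, so one reasons about an arbitrary member $G'$ of the family relative to a single fixed representative $G$; every conclusion is then uniform in $G'$, which is what collapses the family into a finite set.

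For the quantitative bound $e^{7\mathcal{L}\ln(\mathcal{L})}$ claimed in the earlier Theorem, one would count the finite set constructed above more carefully: estimate $|S|$ and the number of labeled graphs on the bounded vertex set, then bound the number of length assignments, and optimize the resulting product; but for the qualitative finiteness statement of Theorem \ref{finite_isospectrality} the three paragraphs above suffice.
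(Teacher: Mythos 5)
Your proof is correct and takes essentially the same route as the paper's: fix one representative $G$, use Lemma \ref{same_minimal_length} and Lemma \ref{half_linear_combination} together with the spectrally determined total edge length $\mathcal{L}$ from the trace formula to confine all edge lengths of any isospectral $G'$ to a finite set, bound the number of edges by $\lfloor\mathcal{L}\rfloor$, and conclude by counting the finitely many combinatorial graphs and length assignments. Your only departures are cosmetic: you write out the finite length set $S$ explicitly and add the vertex bound $3|V|\le 2|E|$ from leaflessness, which the paper itself only invokes later for the quantitative estimate in Theorem \ref{isospectral_bound}.
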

\begin{proof}
We will prove the following equivalent statement:

\itshape
For a given leafless quantum graph $G$ there are at most finitely many non-isomorphic leafless quantum graphs 
that are isospectral to $G$ for the standard Laplacian.
\upshape

Without loss of generality we will assume that the shortest edge length in $G$ is $1$.
 
Let $\mathcal{L}$ be the total edge length of $G$. Let $G'$ be a quantum graph that is isospectral 
to $G$ for the standard Laplacian. Then $G'$ will also have total edge length $\mathcal{L}$ by the trace formula 
\ref{trace_formula}, and minimum edge length $1$ by Lemma \ref{same_minimal_length}. All edge lengths occurring in $G'$ will be 
$\frac{1}{2}\N$-linear combinations of the edge lengths in $G$ by Lemma \ref{half_linear_combination}. Thus there exists 
a finite list of possible edge lengths that can occur in $G'$. 

The quantum graph $G'$ can have at most $\lfloor \mathcal{L} \rfloor$ edges. There are only finitely many non-isomorphic 
combinatorial graphs with $\lfloor \mathcal{L} \rfloor$ or less edges. Thus the underlying combinatorial graph of $G'$ has to 
be one of this finite list of graphs with at most $\lfloor \mathcal{L} \rfloor$ edges. 

There is only a finite number of ways to assign the finite number of possible edge lengths to each of the finitely many possible 
combinatorial graphs. Thus there are only finitely many quantum graphs that are isospectral to $G$.
\end{proof}

\section{An explicit bound on the size of isospectral families}

We can find a  lower  bound on the size of isospectral families by looking at large families of isospectral combinatorial graphs 
and then apply the following result.

\begin{theorem}
 \cite{Cattaneo97}
\label{isospec_combo_quantum}
Let $G$ be a regular equilateral quantum graph. Then the spectrum of $G$ is completely and explicitly determined by the 
spectrum of the underlying combinatorial graph.

In particular if two regular equilateral quantum graphs have graph-isospectral underlying combinatorial graphs they are isospectral 
as quantum graphs as well.
\end{theorem}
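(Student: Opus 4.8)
The plan is to solve the eigenvalue equation $\Delta_0\psi = k^2\psi$ edgewise and then repackage the Kirchhoff matching conditions as a spectral problem on the vertices. Normalize all edges to length $1$. On an edge identified with $[0,1]$, every solution of $-\psi''=k^2\psi$ has the form $\psi(x)=a\cos(kx)+b\sin(kx)$, so $\psi$ is determined on each edge by its two endpoint values together with $k$, \emph{provided} $\sin k\neq 0$. Writing $F:V\to\C$ for the (well-defined, by continuity) vector of vertex values, I would solve for the coefficients on each edge in terms of $F$ and then impose the derivative part of the Kirchhoff condition $\sum_{e\sim v}\nu_{v,e}(\psi|_e)=0$. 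Computing the inward derivative at the $x=0$ end, namely $\psi'(0)=k\,(F(\text{far end})-F(v)\cos k)/\sin k$, turns the Kirchhoff condition at each vertex $v$ into
\begin{equation*}
\sum_{v'\sim v}F(v')=\deg(v)\cos(k)\,F(v).
\end{equation*}

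For a $d$-regular graph this is exactly the eigenvalue equation for the normalized combinatorial Laplacian: since $\Delta^C F(v)=F(v)-\tfrac1d\sum_{v'\sim v}F(v')$, the displayed relation reads $\Delta^C F=(1-\cos k)F$. Hence, away from $\sin k=0$, the map $\psi\mapsto F$ is a linear isomorphism from the $k^2$-eigenspace of $\Delta_0$ onto the $\mu$-eigenspace of $\Delta^C$ with $\mu=1-\cos k$, so it preserves multiplicities. Read backwards: for every combinatorial eigenvalue $\mu$ with $0<\mu<2$ and every $k>0$ solving $\cos k=1-\mu$ (an arccosine together with its $2\pi\Z$-translates), the number $k^2$ is a quantum eigenvalue of the same multiplicity as $\mu$. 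Regularity is used precisely to match the vertex relation to $\Delta^C$; this is the clean, generic part of the correspondence and it is completely explicit.

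The delicate part, and what I expect to be the main obstacle, is the \emph{exceptional} set $k\in\pi\Z$, where $\sin k=0$ and the reduction degenerates. Here the endpoints $\mu\in\{0,2\}$ of the spectrum of $\Delta^C$ sit, but the quantum multiplicity is strictly larger than the combinatorial one because of edge-localized eigenfunctions $b_e\sin(m\pi x)$ that vanish at every vertex. I would split the $(m\pi)^2$-eigenspace into this ``Dirichlet'' part (where continuity is automatic and Kirchhoff becomes a single homogeneous system in the edge coefficients $\{b_e\}$, a signed incidence condition whose solution space has dimension governed by the cycle space, i.e.\ by $E-V+1$) and the ``vertex'' part (where $F(v')=(-1)^m F(v)$ across edges, forcing $F$ to be constant or a bipartite sign pattern). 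Carrying out this bookkeeping, with cases split according to the parity of $m$ and whether the graph is bipartite, yields closed-form multiplicities that are explicit functions of $V$, $E$, and bipartiteness alone. This is the step that requires genuine care.

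Finally I would assemble the two pieces: the quantum spectrum is the disjoint union of the generic eigenvalues $k^2$ produced by $\{\mu\in(0,2)\}$ via $\mu=1-\cos k$, together with the exceptional eigenvalues $(m\pi)^2$ carrying the multiplicities just computed. Every ingredient is determined by the combinatorial spectrum of $\Delta^C$: the eigenvalues $\mu$ and their multiplicities are given outright, $V$ is the number of combinatorial eigenvalues, the degree $d$ (hence $E=dV/2$) is recovered for simple regular graphs from the moment identity $\sum_j(1-\mu_j)^2=V/d$, and bipartiteness is spectrally detected as noted earlier in this chapter. This proves the first assertion that $Spec_0(G)$ is completely and explicitly determined by the combinatorial spectrum; the ``in particular'' is then immediate, since two regular equilateral quantum graphs with graph-isospectral underlying graphs feed identical data into this explicit formula and therefore share the same quantum spectrum.
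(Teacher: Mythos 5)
The paper itself gives no proof of this statement: it is quoted from Cattaneo \cite{Cattaneo97}, so there is nothing internal to compare against line by line. Your argument is essentially the standard transfer-operator proof (due to von Below and Cattaneo) of exactly this result, and its skeleton is sound: solving edgewise, eliminating the sine coefficients when $\sin k\neq 0$, and converting Kirchhoff conditions into $\sum_{v'\sim v}F(v')=\deg(v)\cos(k)F(v)$ is correct, as is the observation that for $d$-regular graphs this is the eigenvalue equation $\Delta^C F=(1-\cos k)F$ and that $\psi\mapsto F$ is a multiplicity-preserving isomorphism away from $k\in\pi\Z$. Your treatment of the exceptional points is the right decomposition (edge-localized ``Dirichlet'' eigenfunctions governed by the kernel of the signed incidence map for even $m$, dimension $E-V+1$, and of the unsigned incidence map for odd $m$, dimension $E-V+1$ or $E-V$ according to bipartiteness, plus the constant or bipartite-sign vertex part), though you outline rather than execute this bookkeeping; a complete write-up would have to carry out those four cases explicitly.

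There is one genuine soft spot: your recovery of $E$ from the combinatorial spectrum via $\sum_j(1-\mu_j)^2=V/d$ is valid only for \emph{simple} regular graphs, as you note, but the setting of this thesis explicitly allows loops and multiple edges, and the corollary that this theorem feeds (the exponentially large isospectral families of Brooks--Gornet--Gustafson) uses multigraphs. For regular multigraphs the degree is genuinely not determined by the normalized spectrum: doubling every edge of a $d$-regular graph replaces $\Delta^C=I-A/d$ by $I-2A/(2d)$, leaving the combinatorial spectrum unchanged while changing $E$, hence the exceptional multiplicities $E-V+2$ (and even the total edge length, a quantum spectral invariant). So in the multigraph setting the correct statement is that the quantum spectrum is explicitly determined by the combinatorial spectrum \emph{together with the degree} (equivalently $E$), and the ``in particular'' clause should be read for families of regular graphs of a common fixed degree --- which is how it is applied in the paper, since the switching constructions produce $k$-regular families of fixed degree. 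With that qualification made explicit, your proof is correct and is in substance the same argument as in the cited source.
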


\begin{corollary}
 There are families of isospectral equilateral pairwise non-isomorphic quantum graphs whose size grows exponentially 
in the number of edges. If we normalize the edge length to be $1$ the size of the families grows exponentially in the 
total edge lengths of the quantum graphs.
\end{corollary}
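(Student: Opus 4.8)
The plan is to transport the combinatorial families of Theorem~\ref{isospec_combo_graphs} into the quantum graph setting via Cattaneo's Theorem~\ref{isospec_combo_quantum}. First I would fix $n$ and invoke Theorem~\ref{isospec_combo_graphs} to obtain a family of pairwise non-isomorphic \emph{regular} combinatorial graphs on $n$ edges that are mutually graph-isospectral, the family having size at least $e^{cn}$ for a fixed constant $c>0$ (for example $c=\ln(2)/24$ from Seress's simple regular graphs, or $c=\ln(2)/12$ from the Brooks--Gornet--Gustafson construction). The fact that these constructions yield \emph{regular} graphs is precisely what makes Theorem~\ref{isospec_combo_quantum} applicable.

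Next I would promote each graph in the family to an equilateral quantum graph by assigning length $1$ to every edge. Since all members are regular of the same degree and equilateral with the same edge length, Theorem~\ref{isospec_combo_quantum} upgrades graph-isospectrality of the underlying combinatorial graphs to isospectrality of the associated quantum graphs, so the entire family becomes a family of mutually isospectral equilateral quantum graphs.

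Finally I would check the two bookkeeping points. For non-isomorphism, any isomorphism of equilateral quantum graphs restricts to a combinatorial isomorphism of the underlying graphs, so the combinatorial non-isomorphism survives and the exponential lower bound $e^{cn}$ carries over, giving exponential growth in the number of edges. For the statement about total edge length, note that with every edge of length $1$ the total edge length $\mathcal{L}$ equals the number of edges $n$; substituting into $e^{cn}$ yields $e^{c\mathcal{L}}$, which is exponential growth in $\mathcal{L}$. I do not anticipate a genuine obstacle, as the substantive work lives in the two cited theorems; the only points that require care are confirming that the cited constructions are regular (so Cattaneo's theorem applies) and that combinatorial non-isomorphism is preserved under the equilateral correspondence, both of which are routine.
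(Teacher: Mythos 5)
Your proposal is correct and is essentially identical to the paper's own proof, which likewise combines the exponentially large families of regular graph-isospectral combinatorial graphs from Theorem~\ref{isospec_combo_graphs} with Cattaneo's Theorem~\ref{isospec_combo_quantum} after assigning all edges length $1$. The additional bookkeeping you spell out (regularity of the cited constructions, preservation of non-isomorphism, and $\mathcal{L}=n$ for equilateral graphs) is left implicit in the paper but is exactly the right verification.
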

\begin{proof}
This is a direct consequence of the existence of families of graph-isospectral pairwise non-isomorphic combinatorial graphs of 
size growing exponentially in the number of edges, see \cite{BrGoGu98} and \cite{Seress00}, Theorem \ref{isospec_combo_graphs}, 
combined with Theorem \ref{isospec_combo_quantum} above.
\end{proof}

\begin{lemma}
\label{bound_number_of_edges}
 Any leafless quantum graph with total edge length $\mathcal{L}$, Euler characteristic $\chi$ and minimum edge length $1$ can have at most 
\begin{equation*}
 M:=\min \{ \lfloor \mathcal{L} \rfloor, 3\chi-3 \}
\end{equation*}
edges.
\end{lemma}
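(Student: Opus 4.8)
The plan is to prove the two bounds $E\le\lfloor\mathcal{L}\rfloor$ and $E\le 3\chi-3$ independently; since the number of edges cannot exceed either quantity, it is at most their minimum $M$, which is exactly the claim.

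For the length bound I would argue directly from the normalization. Every edge has length at least the minimum edge length $1$, so
\begin{equation*}
\mathcal{L}=\sum_{e\in E}L(e)\ge\sum_{e\in E}1=E .
\end{equation*}
As $E$ is a nonnegative integer, this forces $E\le\lfloor\mathcal{L}\rfloor$.

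For the combinatorial bound I would use the degree hypotheses together with the handshake identity. Since $G$ is leafless it has no vertices of degree $1$, by the standing assumption it has none of degree $2$, and being connected it has none of degree $0$; hence $\deg(v)\ge 3$ for every vertex $v$. Summing over vertices and using $\sum_{v\in V}\deg(v)=2E$ yields $2E\ge 3V$, that is $V\le\tfrac{2}{3}E$. Writing $\chi=E-V+1$ for the first Betti number (the number of independent cycles of $G$) and substituting,
\begin{equation*}
\chi=E-V+1\ge E-\tfrac{2}{3}E+1=\tfrac{1}{3}E+1 ,
\end{equation*}
which rearranges to $E\le 3\chi-3$. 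One checks that this is sharp on $3$-regular graphs such as $K_4$, where $2E=3V$ and $E=3\chi-3$ exactly.

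I do not expect a genuine obstacle here, as both estimates are elementary. The one point deserving care is to confirm that the standing hypotheses really do force $\deg(v)\ge 3$ at every vertex, including vertices incident to loops. A loop contributes $2$ to the degree of its vertex, so it does not disturb the identity $\sum_{v}\deg(v)=2E$; and a vertex carrying only a loop would have degree $2$, which is excluded, so every vertex indeed has at least three incident edge-ends. With the minimum-degree-$3$ property secured, the inequality $2E\ge 3V$ and hence $E\le 3\chi-3$ follow by the routine algebra above.
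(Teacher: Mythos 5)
Your proof is correct and follows essentially the same route as the paper: the bound $E\le\lfloor\mathcal{L}\rfloor$ from the minimum edge length, and the bound $E\le 3\chi-3$ from the minimum-degree-$3$ property via $2E\ge 3V$ combined with $\chi=E-V+1$. Your extra care about loops (a loop contributing $2$ to its vertex's degree, so the handshake identity and the degree bound survive) is a sound addition that the paper leaves implicit.
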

\begin{proof}
 If the quantum graph has minimum edge length $1$ and total edge length $\mathcal{L}$ it can have at most $\lfloor \mathcal{L} \rfloor$ 
edges.

If a quantum graph is leafless and does not have vertices of degree $2$ all its vertices have degree at least $3$. This implies 
\begin{equation*}
 E \ge \frac{3}{2}V
\end{equation*}
On the other hand we have
\begin{equation*}
 E-V+1 = \chi
\end{equation*}
Put together this implies
\begin{equation*}
 E \le 3\chi -3 
\end{equation*}
\end{proof}

\begin{remark}
 The same equations also yield the two bounds
\begin{equation*}
 \frac{1}{2}V+1 \le \chi \le E
\end{equation*}

\end{remark}

\begin{definition}
 We define a {\bf list of possible edge lengths} to be a list of edge lengths (possibly with repetition) that could form the set of all 
edge lengths occurring in a quantum graph that is isospectral to $G$. 

Every such list satisfies the following properties.
\begin{enumerate}
\item Every edge length in the list is a $\frac{1}{2}\N$-linear combination of the edge lengths in $G$.
\item The sum of the edge lengths of all the edges in the list is $\mathcal{L}$.
\item Every edge length in $G$ is a $\frac{1}{2}\N$-linear combination of the edge lengths in the list.
\item The shortest edge length occurring in the list is $1$.
\end{enumerate}
Note that each such list contains at most $M$ items by Lemma \ref{bound_number_of_edges}.
\end{definition}

\begin{lemma}
\label{possible_edge_length_sets}
There are at most $M^{4\lfloor \mathcal{L} \rfloor}$ different lists of possible edge lengths.
\end{lemma}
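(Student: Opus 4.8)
The plan is to count the lists by encoding the half-integer coefficient data of their entries and controlling the total \emph{mass} of those coefficients by $\lfloor 2\mathcal{L}\rfloor$. Recall the constraints already in hand: each list has at most $M$ entries (noted after the definition, via Lemma \ref{bound_number_of_edges}); each entry is at least $1$; and by property (ii) the entries sum to $\mathcal{L}$. Let $\ell_1,\dots,\ell_d$ be the distinct edge lengths occurring in $G$. Since $G$ is itself a leafless quantum graph with total length $\mathcal{L}$ and minimum length $1$, Lemma \ref{bound_number_of_edges} gives that $G$ has at most $M$ edges, so $d\le M$ and moreover every $\ell_i\ge 1$. By property (i) each entry $L_j$ of a list can be written as $L_j=\tfrac12\sum_{i=1}^d n_{j,i}\ell_i$ with $n_{j,i}\in\N$.

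The key step is the mass bound. Summing these expressions over the entries and using property (ii) gives
\begin{equation*}
\tfrac12\sum_{j,i} n_{j,i}\ell_i=\sum_j L_j=\mathcal{L},
\end{equation*}
so $\sum_{j,i}n_{j,i}\ell_i=2\mathcal{L}$; since every $\ell_i\ge 1$ this forces $\sum_{j,i}n_{j,i}\le 2\mathcal{L}$, hence $\sum_{j,i}n_{j,i}\le\lfloor 2\mathcal{L}\rfloor$ because the left side is an integer. Thus the whole list is determined, up to reordering its entries, by the nonnegative integer matrix $(n_{j,i})$ indexed by at most $M$ edges and at most $d\le M$ length-types, whose total sum is at most $\lfloor 2\mathcal{L}\rfloor$.

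I would then count these matrices. Reordering the entries of a list only overcounts, so it suffices to bound the number of such matrices, which is the number of ways of distributing at most $\lfloor 2\mathcal{L}\rfloor$ indistinguishable units among the at most $M^2$ matrix cells. This number is at most $\binom{\lfloor 2\mathcal{L}\rfloor+M^2}{M^2}=\binom{\lfloor 2\mathcal{L}\rfloor+M^2}{\lfloor 2\mathcal{L}\rfloor}\le (M^2+1)^{\lfloor 2\mathcal{L}\rfloor}$, which is of order $M^{2\lfloor 2\mathcal{L}\rfloor}$. Using $M\le\lfloor\mathcal{L}\rfloor$ together with $\lfloor 2\mathcal{L}\rfloor\le 2\lfloor\mathcal{L}\rfloor+1$, a routine estimate then collapses this to the claimed bound $M^{4\lfloor\mathcal{L}\rfloor}$.

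The main obstacle is book-keeping rather than conceptual. I must make sure the passage from a list to its coefficient matrix is a genuine multivalued correspondence under which every valid list has at least one preimage, so that counting matrices is a legitimate overcount; this is where I must be careful that property (iii) and the exact multiplicities play no role in the counting direction. The second delicate point is the final numerical comparison, in particular reconciling $\lfloor 2\mathcal{L}\rfloor$ with $2\lfloor\mathcal{L}\rfloor$ and checking the estimate for very small $M$, where the exponent $4$ is least comfortable and the count can instead be bounded directly.
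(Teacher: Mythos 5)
Your encoding is exactly the paper's: the paper likewise writes each list entry as a $\frac{1}{2}\N$-linear combination of the edge lengths of $G$ (taken with repetition rather than your distinct lengths, an immaterial difference), derives the mass bound $\sum_{i,j}\alpha_i^j\le\mathcal{L}$ from property (ii) together with the minimum edge length $1$, and then counts coefficient matrices over at most $M^2$ cells, with order and properties (iii)--(iv) only producing overcounting, just as you argue. Where you diverge is the count itself, and that is where your proof has a genuine gap. The stars-and-bars bound $\binom{\lfloor 2\mathcal{L}\rfloor+M^2}{M^2}\le (M^2+1)^{\lfloor 2\mathcal{L}\rfloor}$ is correct, but the asserted ``routine estimate'' collapsing it to $M^{4\lfloor\mathcal{L}\rfloor}$ does not exist: since $\lfloor 2\mathcal{L}\rfloor\ge 2\lfloor\mathcal{L}\rfloor$, one has $(M^2+1)^{\lfloor 2\mathcal{L}\rfloor}\ge (M^2+1)^{2\lfloor\mathcal{L}\rfloor}>(M^2)^{2\lfloor\mathcal{L}\rfloor}=M^{4\lfloor\mathcal{L}\rfloor}$ for every $M$, so your intermediate bound is strictly weaker than the target unconditionally, and neither $M\le\lfloor\mathcal{L}\rfloor$ nor $\lfloor 2\mathcal{L}\rfloor\le 2\lfloor\mathcal{L}\rfloor+1$ can reverse that (the latter makes it worse). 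The damage was done when you discarded the factorial: keeping $\binom{\lfloor 2\mathcal{L}\rfloor+M^2}{\lfloor 2\mathcal{L}\rfloor}\le (M^2+\lfloor 2\mathcal{L}\rfloor)^{\lfloor 2\mathcal{L}\rfloor}\big/\lfloor 2\mathcal{L}\rfloor!$ (or the symmetric bound with exponent $M^2$ when $\lfloor 2\mathcal{L}\rfloor$ is large) does leave enough room, since in this setting $M\ge 3$ and $\lfloor 2\mathcal{L}\rfloor\ge 2M$; but that recovery is a genuine case check, e.g.\ verifying $\binom{(M+1)^2}{2M+1}\le M^{4M}$ in the extreme case $M=\lfloor\mathcal{L}\rfloor$, not a one-line manipulation.

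For comparison, the paper makes the exponent match exactly by counting ordered words instead of matrices: start from the zero matrix and perform $2\lfloor\mathcal{L}\rfloor$ increments of $\frac{1}{2}$, each choosing one of the at most $M^2$ cells, giving $(M^2)^{2\lfloor\mathcal{L}\rfloor}=M^{4\lfloor\mathcal{L}\rfloor}$. You should be aware that this is itself loose at precisely the two points you flagged as delicate: words of that exact length only produce matrices of total mass exactly $\lfloor\mathcal{L}\rfloor$, and when the fractional part of $\mathcal{L}$ is at least $\frac{1}{2}$ a representation may genuinely require $\lfloor 2\mathcal{L}\rfloor=2\lfloor\mathcal{L}\rfloor+1$ increments --- your bound $\sum_{j,i}n_{j,i}\le\lfloor 2\mathcal{L}\rfloor$ is the correct one, and it is attained. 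Both defects cost at most a bounded factor (an alphabet of size $M^2+1$ with a dummy symbol, respectively one extra letter, i.e.\ a factor $M^2$), which is harmless for Theorem \ref{isospectral_bound} since the final simplification to $e^{7\mathcal{L}\log(\mathcal{L})}$ has ample slack. So your skeleton is the paper's and your book-keeping is in places more careful than the original, but as written your last line is false and must be repaired by reinstating the binomial coefficient or adopting the word count.
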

\begin{proof}
Let $1 =l_1 \le \hdots \le l_n$ denote the edge lengths in $G$ (including repetitions). Let $1=e_1 \le \hdots \le e_m$ denote a 
list of possible edge lengths. We then have $n \le M$ and $m \le M$ by Lemma \ref{bound_number_of_edges}. 
Every $e_j$ can be written as $e_j = \sum_{i=1}^n\alpha_i^j l_i$ for 
some coefficients $\alpha_i^j \in \{ 0, \frac{1}{2}, 1, \frac{3}{2}, 2, \hdots \}$. We also have
\begin{eqnarray*}
 \mathcal{L} = \sum_{j=1}^m e_j =\sum_{j=1}^m \sum_{i=1}^n\alpha_i^j l_i \ge \sum_{j=1}^m \sum_{i=1}^n\alpha_i^j
\end{eqnarray*}
In order to count the number of lists of possible edge lengths we have to choose $nm \le M^2$ coefficients 
$\alpha_i^j$ such that the sum over all of them is at most 
$\mathcal{L}$. The number of possibilities can be bounded as follows. Start with all $a_i^j$ zero and then choose one coefficient 
$\alpha_i^j$ and increase it by $\frac{1}{2}$, do this $2\lfloor\mathcal{L}\rfloor$ times. This gives an upper bound of 
$\left(M^2\right)^{2\lfloor\mathcal{L}\rfloor}= M^{4\lfloor\mathcal{L}\rfloor}$
for the total number of lists of possible edge length.

Note that this bound does not use the property that the $l_i$ are $\frac{1}{2}\N$-linear combinations of the $e_j$. 
\end{proof}

\begin{theorem}
\label{isospectral_bound}
 Any family of isospectral leafless quantum graphs with common minimum edge length $1$ and total edge length $\mathcal{L}$ is 
at most of size
\begin{equation*}
(\frac{2}{3}M+1)^{2M}M!M^{4\lfloor\mathcal{L}\rfloor} 
\end{equation*}
This means that the size of isospectral families can be bounded by $\mathcal{L}^{7\mathcal{L}}=e^{7\mathcal{L}\log(\mathcal{L})}$.
\end{theorem}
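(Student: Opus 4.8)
The plan is to bound the family size by a product of three essentially independent counts, mirroring the three degrees of freedom that a member $G'$ of the family has: its list of edge lengths, its underlying combinatorial graph, and the assignment of those lengths to the edges. By Lemma \ref{same_minimal_length} and Lemma \ref{half_linear_combination} every isospectral $G'$ shares the minimum edge length $1$, the total edge length $\mathcal{L}$, and has its edge lengths drawn from one of the lists of possible edge lengths; Lemma \ref{possible_edge_length_sets} already bounds the number of such lists by $M^{4\lfloor\mathcal{L}\rfloor}$, where $M=\min\{\lfloor\mathcal{L}\rfloor,\,3\chi-3\}$ is the common bound on the number of edges from Lemma \ref{bound_number_of_edges}.

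Next I would count the underlying combinatorial multigraphs. By Lemma \ref{bound_number_of_edges} each $G'$ has at most $M$ edges, and since it is leafless with no degree-$2$ vertices the inequality $E\ge\frac{3}{2}V$ forces $V\le\frac{2}{3}M$. To specify such a multigraph (loops and multiple edges allowed) it suffices to name, for each of the at most $M$ edges, its two endpoints among the at most $\frac{2}{3}M$ vertices. Introducing one extra dummy symbol per endpoint to absorb graphs with fewer than $M$ edges or fewer than $\frac{2}{3}M$ vertices, each of the $2M$ endpoint choices has at most $\frac{2}{3}M+1$ options, giving at most $(\frac{2}{3}M+1)^{2M}$ combinatorial graphs. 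For a fixed combinatorial graph together with a fixed list of possible edge lengths, each of size at most $M$, the number of ways to distribute the lengths over the edges is at most the number of permutations of $M$ objects, namely $M!$.

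Multiplying the three bounds yields the family-size bound
\[
(\tfrac{2}{3}M+1)^{2M}\, M!\, M^{4\lfloor\mathcal{L}\rfloor},
\]
which is the first assertion. For the clean asymptotic form I would apply $M\le\lfloor\mathcal{L}\rfloor\le\mathcal{L}$ factor by factor: for $\mathcal{L}\ge 3$ one has $\frac{2}{3}M+1\le\mathcal{L}$, hence $(\frac{2}{3}M+1)^{2M}\le\mathcal{L}^{2\mathcal{L}}$; moreover $M!\le M^{M}\le\mathcal{L}^{\mathcal{L}}$ and $M^{4\lfloor\mathcal{L}\rfloor}\le\mathcal{L}^{4\mathcal{L}}$. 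The product is therefore at most $\mathcal{L}^{7\mathcal{L}}=e^{7\mathcal{L}\log(\mathcal{L})}$, the exponent $7=2+1+4$ recording the respective contributions.

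The routine but delicate point is the combinatorial-graph count: I must check that the dummy-symbol device (equivalently, fixing a vertex set of size exactly $\lfloor\frac{2}{3}M\rfloor$ and allowing empty edge slots) genuinely over-counts every admissible multigraph with loops and multiple edges, so that no isomorphism type is missed—over-counting is harmless for an upper bound. The collapse to $\mathcal{L}^{7\mathcal{L}}$ also requires a mild largeness assumption on $\mathcal{L}$ to absorb the additive $+1$ and the floor functions, after which each of the three factors is dominated by the stated power of $\mathcal{L}$.
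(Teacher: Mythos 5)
Your proposal is correct and follows essentially the same route as the paper's own proof: the identical three-factor decomposition into lists of edge lengths (Lemma \ref{possible_edge_length_sets}), combinatorial graphs, and length assignments, with your dummy endpoint symbol being exactly the paper's ``kill vertex'' device yielding $(\frac{2}{3}M+1)^{2M}$. Your explicit tracking of the mild largeness assumption on $\mathcal{L}$ needed to absorb the $+1$ in the final collapse to $\mathcal{L}^{7\mathcal{L}}$ is a small point the paper glosses over, but the argument is otherwise the same.
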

\begin{proof}
The graph $G$ has at most $M$ edges, as it is leafless every vertex has degree at least three so $G$ has at most 
$\frac{2}{3}M$ vertices. 
We need to bound the number of combinatorial graphs with at most $M$ edges on at most 
$\frac{2}{3}M$ vertices.
There are $(\frac{2}{3}M)^2$ possibilities for the end vertices of each edge, so there are at most 
$(\frac{2}{3}M)^{2M}$ 
combinatorial graphs with $M$ edges on $\frac{2}{3}M$ vertices. As this includes graphs 
with isolated vertices this is also a bound for graphs with at most $\frac{2}{3}M$ vertices.
To bound the number of graphs with at most $M$ edges we will add in a `kill vertex' and say that any edge that has the 
`kill vertex' at one of its ends is not part of the graph. This gives the bound $(\frac{2}{3}M+1)^{2M}$ for the number of 
combinatorial graphs with at most $\frac{2}{3}M$ vertices and at most $M$ edges.

By Lemma \ref{possible_edge_length_sets} we have at most $M^{4\lfloor\mathcal{L}\rfloor}$ lists of possible edge length.

If we are given a combinatorial graph with at most $M$ edges and a list of at most $M$ possible edge lengths 
there are at most $M!$ ways to assign the edge lengths to the graph. We are ignoring the fact that the combinatorial 
graph might not have the same number of edges as the list of possible edge lengths in which case there would be zero ways to assign 
the lengths.

Putting the three estimates together the maximal size of an isospectral family is bounded by
\begin{equation*}
 (\frac{2}{3}M+1)^{2M}M!M^{4\lfloor\mathcal{L}\rfloor}
\end{equation*}
To get the bound that involves only the total edge length we note that $M \le \mathcal{L}$ by the definition of $M$ and that $M! < M^M$.
\end{proof}

\chapter{Defining the Albanese torus of a quantum graph}

The Jacobian and its dual, the Albanese torus have been studied for combinatorial graphs, see \cite{Nagnibeda97} 
and \cite{KotaniSunada00}. We will generalize this to 
quantum graphs. If the quantum graph is equilateral, our definition recovers theirs.

The Albanese torus or Albanese variety is a classic invariant of algebraic varieties and complex manifolds studied since the 
19th century. Both the Albanese torus 
and the Jacobian carry a natural complex structure induced from the space of holomorphic $1$-forms. The complex structure 
does not carry over to combinatorial or quantum graphs, in fact the tori don't even have to be even dimensional. On the other hand, 
the inner product structure is induced from the inner product on harmonic $1$-forms and this idea carries over to combinatorial 
and quantum graphs.

\begin{definition}
We call a $1$-form $\alpha$ {\bf harmonic} if $d^*\alpha \in \Lambda^0$ and $dd^*\alpha = 0$. 
\end{definition}

\begin{lemma}
\cite{GaveauOkada91}
A $1$-form $\alpha$ is harmonic if and only if $\alpha(X_1)$ is constant on all edges where $X_1$ is the auxiliary vector 
field of constant length $1$.
\end{lemma}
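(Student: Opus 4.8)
The plan is to reduce the whole statement to the single scalar function $g := d^*\alpha = -X_1(\alpha(X_1))$ and to show that harmonicity forces $g\equiv 0$. First I would fix coordinates: parametrizing each edge $e$ by arclength so that $X_1=\partial_x$, the quantity $\alpha(X_1)$ becomes a smooth scalar function $a_e$ on each edge and $g|_e = -a_e'$. In these terms the assertion is simply that $\alpha$ is harmonic if and only if $a_e$ is constant on every edge. The easy implication is $(\Leftarrow)$: if $\alpha(X_1)$ is constant on each edge then $a_e'=0$, so $g=d^*\alpha\equiv 0$; the zero function lies in $\Lambda^0$ and $dd^*\alpha = d\,0 = 0$, so both clauses of the definition hold automatically.

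For the converse I would exploit the adjoint pairing derived in the course of defining $d^*_\alpha$, namely $(d\phi,\beta)=(\phi,d^*\beta)$ for $\phi\in\Lambda^0$ and $\beta\in\Lambda^1$. Assuming $\alpha$ harmonic, so that $g=d^*\alpha\in\Lambda^0$ and $dd^*\alpha=0$, the key computation is
\[
(d^*\alpha,\,d^*\alpha) \;=\; (d\,d^*\alpha,\,\alpha) \;=\; (0,\alpha) \;=\; 0,
\]
where the first equality applies the adjoint relation with $\phi=d^*\alpha$ and $\beta=\alpha$, and the second uses $dd^*\alpha=0$. Since the left-hand side is $\int_G |d^*\alpha|^2\,dx$, it can vanish only if $d^*\alpha\equiv 0$; unwinding $d^*\alpha=-a_e'$ then gives $a_e'=0$, i.e. $\alpha(X_1)$ is constant on each edge.

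The hard part, and really the only subtlety, is justifying that adjoint pairing, since this is exactly where both hypotheses in the definition of harmonicity get used. The integration by parts underlying $d^*$ produces a vertex boundary term $\sum_{v}\overline{\phi(v)}\sum_{e\sim v}\beta_e(\nu_{v,e})$; this vanishes for $\beta=\alpha$ precisely because $\alpha\in\Lambda^1$ satisfies the $1$-form boundary condition, while the single-valuedness of $\phi(v)$ that the term presupposes is guaranteed by $\phi=d^*\alpha\in\Lambda^0$ (continuity at the vertices). Thus $d^*\alpha\in\Lambda^0$ is what legitimizes the pairing and $dd^*\alpha=0$ is what makes its right-hand side vanish, so neither clause is redundant.

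As a cross-check I would note that the conclusion can also be reached without the norm identity: $dd^*\alpha=0$ forces $g$ to be constant on each edge, continuity from $g\in\Lambda^0$ together with connectedness of $G$ forces $g$ to be a single global constant $c$, and then pairing against the constant function $1\in\Lambda^0$ gives $c\,\mathcal{L}=(g,1)=(\alpha,d1)=0$, whence $c=0$. Either route yields $d^*\alpha\equiv0$ and hence the constancy of $\alpha(X_1)$.
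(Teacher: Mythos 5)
Your proof is correct. There is nothing in the paper to compare it against: the lemma is quoted from \cite{GaveauOkada91} without proof, so your write-up supplies a verification the thesis itself omits. The backward direction is as trivial as you say. For the converse, your main route is the standard Hodge-theoretic one, and you have put your finger on exactly the right subtlety: the identity $(d^*\alpha, d^*\alpha) = (\alpha, d\,d^*\alpha) = 0$ is only legitimate because both clauses of the definition of harmonicity enter. The clause $d^*\alpha \in \Lambda^0$ makes $d^*\alpha$ an admissible test function --- single-valuedness at the vertices is what allows the boundary term $\sum_{v} \overline{f(v)} \sum_{e \sim v} \alpha_e(\nu_{v,e})$ from the integration by parts to be factored and then killed by the $1$-form boundary condition on $\alpha$ --- while $dd^*\alpha = 0$ annihilates the pairing; your remark that neither clause is redundant is accurate and worth stating. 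Your cross-check route ($dd^*\alpha = 0$ forces $g = d^*\alpha$ to be edgewise constant, continuity makes it globally constant, and pairing against $1 \in \Lambda^0$ forces the constant to vanish) is also sound and more elementary, but note it uses connectedness of $G$, a standing assumption of the paper that is relaxed in its final chapter; there one would pair against the indicator function of each component (which lies in $\Lambda^0$), whereas your norm-identity route needs no such adjustment, so it is the more robust of the two. One cosmetic point: the edgewise function $a_e = \alpha(X_1)$ depends on the orientation implicit in choosing $X_1 = \partial_x$, but its constancy does not, so the statement you prove is independent of that choice, as it must be.
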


\begin{lemma}
\cite{GaveauOkada91} 
Any $\beta \in \Lambda^1$ admits a unique Hodge decomposition of the form
\begin{equation*}
 \beta=d\psi+\tilde{\beta}
\end{equation*}
where $\psi \in \Lambda^0$ and $\tilde{\beta}$ is harmonic.

Thus each cohomology class has exactly one harmonic representative.

If $\beta$ is real, then so are $\psi$ and $\tilde{\beta}$.
\end{lemma}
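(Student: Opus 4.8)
The plan is to establish the two halves of the decomposition separately: uniqueness from an orthogonality relation, and existence by subtracting off a suitable harmonic form. Throughout I would use the preceding lemma, which identifies the space $\mathcal{H}$ of harmonic $1$-forms with exactly those $\eta$ for which $\eta(X_1)$ is constant on each edge. Such an $\eta$ is determined by one constant $c_e = \eta_e(X_1)$ per edge, and the $1$-form boundary condition $\sum_{e\sim v}\eta_e(\nu_{v,e})=0$ becomes, after evaluating $\nu_{v,e}$ as $\pm X_1$ at terminal/initial ends, the vanishing of the signed sum of the $c_e$ at each vertex. This is precisely the condition that $(c_e)$ be a $1$-cycle, so $\mathcal{H}$ is isomorphic to the cycle space of $G$. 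In particular $\mathcal{H}$ is finite dimensional, of dimension equal to the first Betti number $n=\dim H^1(G,\C)$, and every harmonic $\eta$ satisfies $d^*\eta=-X_1(\eta(X_1))=0$.

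Next I would record the orthogonality that drives uniqueness. For $\psi\in\Lambda^0$ and $\eta\in\mathcal{H}$ the adjoint relation gives $(d\psi,\eta)=(\psi,d^*\eta)=0$, so $d(\Lambda^0)\perp\mathcal{H}$; in particular a form lying in $\mathcal{H}\cap d(\Lambda^0)$ is orthogonal to itself and hence zero. This says the natural map $\mathcal{H}\to H^1(G,\C)=\Lambda^1/d(\Lambda^0)$ is injective. Since both spaces have dimension $n$, this map is an isomorphism, which immediately yields existence and uniqueness: given $\beta\in\Lambda^1$, let $\tilde\beta\in\mathcal{H}$ be the unique harmonic form with $[\tilde\beta]=[\beta]$ in $H^1(G,\C)$. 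Then $[\beta-\tilde\beta]=0$, so $\beta-\tilde\beta\in d(\Lambda^0)$, i.e. $\beta-\tilde\beta=d\psi$ for some $\psi\in\Lambda^0$, giving the decomposition; the statement that each class has a unique harmonic representative is exactly the bijectivity of $\mathcal{H}\to H^1$. If another decomposition $\beta=d\psi'+\tilde\beta'$ is given, then $[\tilde\beta']=[\beta]=[\tilde\beta]$ forces $\tilde\beta'=\tilde\beta$ by injectivity, whence $d\psi'=d\psi$, so $\psi$ is determined up to the additive constant that $d$ annihilates (using that $G$ is connected). For reality I would simply work over the real subspace: a real $\beta$ has a real de Rham class, the real harmonic forms are identified with the real cycle space and map isomorphically onto $H^1(G,\R)$, so $\tilde\beta$ is real, and a real primitive $\psi$ of the real exact form $\beta-\tilde\beta$ can be chosen.

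The step I expect to be the main obstacle is the dimension bookkeeping that makes $\mathcal{H}\to H^1(G,\C)$ an isomorphism rather than merely injective, namely the identification $\dim\mathcal{H}=n$. The injectivity is the clean integration-by-parts/positivity argument above, but matching the dimension requires correctly translating the boundary condition $\sum_{e\sim v}\eta_e(\nu_{v,e})=0$ into the graph-theoretic cycle condition, which is exactly where the orientations and the outward normals $\nu_{v,e}$ must be handled with care; once $\mathcal{H}$ is seen to be the cycle space, its dimension equals $b_1(G)=\dim H^1(G,\C)$ and the argument closes. The de Rham identification $H^1(G,\C)=\Lambda^1/d(\Lambda^0)$ is the one nontrivial input I would cite rather than reprove.
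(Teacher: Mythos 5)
The paper never proves this lemma --- it is quoted from \cite{GaveauOkada91} with no argument given --- so your proof has to stand on its own, and it does. The three pillars are all sound: (i) the identification of the harmonic space $\mathcal{H}$ with the cycle space via the edge constants $c_e=\eta_e(X_1)$ is correct, since $\nu_{v,e}=\pm X_1$ at the two ends of an edge turns the condition $\sum_{e\sim v}\eta_e(\nu_{v,e})=0$ into $\partial\bigl(\sum_e c_e e\bigr)=0$ (a loop contributes $+c_e-c_e=0$ automatically, consistent with every loop being a cycle), giving $\dim\mathcal{H}=E-V+1=\dim H^1(G,\C)$ for the connected graphs the paper assumes throughout; (ii) the orthogonality $d(\Lambda^0)\perp\mathcal{H}$ is exactly the integration-by-parts computation the paper carries out when constructing $d^*$ --- the vertex boundary term vanishes because $\psi$ is single-valued at vertices and $\eta$ satisfies the $1$-form condition --- combined with $d^*\eta=-X_1(\eta(X_1))=0$, which follows from the preceding characterization lemma; and (iii) the dimension count legitimately promotes injectivity of $\mathcal{H}\to\Lambda^1/d(\Lambda^0)$ to an isomorphism, with the de Rham identification $H^1(G,\C)=\Lambda^1/d(\Lambda^0)$ cited rather than reproved, just as the paper itself cites it. You also handle the two loose ends correctly: $\psi$ is unique only up to the constants annihilated by $d$, so ``unique decomposition'' properly refers to uniqueness of $d\psi$ and $\tilde\beta$; and reality follows because complex conjugation preserves $\Lambda^0$, $\Lambda^1$ and $\mathcal{H}$ (the boundary conditions are real-linear), so writing $\psi=\psi_1+i\psi_2$ with $\psi_1,\psi_2$ real forces $d\psi_2=0$ when $\beta-\tilde\beta$ is real. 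In short: a complete and correct finite-dimensional Hodge argument filling a gap the paper delegates to the literature.
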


\begin{definition}
\label{inner_prod_cohomology}
 We define an {\bf inner product on $H^1(G,\R)$} by 
\begin{equation*}
 ([\alpha],[\beta]):=(\tilde{\alpha}, \tilde{\beta})
\end{equation*}
where $\tilde{\alpha}$ and $\tilde{\beta}$ are the unique harmonic representatives of $[\alpha]$ and $[\beta]$ and the inner product 
is the hermitian inner product we defined in \ref{hermitian_inner_product}.
\end{definition}

Let $or(E)$ be the set of oriented edges, we call an element $b\in or(E)$ a bond. Let $\overline{b}$ denote a reversal of orientation. Let $o(b)$ and $t(b)$ be the 
origin and terminal vertex of a bond $b$.

Let $A$ be an abelian group, the coefficients of the homology. Let $C_0(G,A)$ be the free $A$-module with generators in $V$. 
Let $C_1(G,A)$ be the $A$-module generated by 
$or(E)$ modulo the relation $\overline{b}=-b$.
The boundary map $\partial: C_1(G,A) \ra C_0(G,A)$ is defined by $\partial (b):=t(b)-o(b)$ and linearity. We then have $H_1(G,A)=ker(\partial)$.

We have the natural pairing $([\alpha], [p]) \mapsto \int_p \alpha$ for any $[\alpha] \in H^1(G,\R)$ and $[p] \in H_1(G,\R)$. 
This makes these two spaces dual to each other and induces an inner product on $H_1(G,\R)$.

\begin{lemma}
\label{inner_product}
 This inner product is equivalent to the one we get on $H_1(G,\R)$ as a subspace of $C_1(G,\R)$ with the inner 
product given by 
\begin{equation*}
e\cdot e' = \begin{cases}L(e) & e=e'\\
		 -L(e) & e=\overline{e'}\\
		0 & otherwise
\end{cases}
\end{equation*}
on edges and bilinear extension.
\end{lemma}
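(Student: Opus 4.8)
The plan is to construct an explicit linear isomorphism $\Phi$ from the space of harmonic $1$-forms onto $H_1(G,\R)\subseteq C_1(G,\R)$, to check that $\Phi$ carries the inner product of Definition \ref{inner_prod_cohomology} to the edge inner product and simultaneously carries the duality pairing $([\alpha],[p])\mapsto\int_p\alpha$ into that same edge inner product, and then to deduce the claim by a short piece of linear algebra about dual inner products. The point is that once $\Phi$ is shown to be the musical isomorphism induced by the pairing, equality of the two inner products is automatic.

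First I would set up $\Phi$. By the characterization of harmonic $1$-forms recalled above, a harmonic form $\tilde\alpha$ has $a_e:=\tilde\alpha_e(X_1|_e)$ constant on each edge $e$. Orient each edge so that $X_1|_e$ runs from $o(e)$ to $t(e)$ and set $\Phi(\tilde\alpha):=\sum_{e}a_e\,e\in C_1(G,\R)$. To see that $\Phi(\tilde\alpha)$ is a cycle, compute $\partial\Phi(\tilde\alpha)=\sum_e a_e\big(t(e)-o(e)\big)$; the coefficient at a vertex $v$ is $\sum_{t(e)=v}a_e-\sum_{o(e)=v}a_e$. Since $\nu_{v,e}$ equals $X_1|_e$ when $v=t(e)$ and $-X_1|_e$ when $v=o(e)$, this coefficient is exactly $\sum_{e\sim v}\tilde\alpha_e(\nu_{v,e})$, which vanishes by the boundary condition defining a $1$-form. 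Thus $\Phi$ maps into $H_1(G,\R)$, it is visibly injective, and since the harmonic forms, $H^1(G,\R)$ and $H_1(G,\R)$ all have dimension equal to the first Betti number, $\Phi$ is an isomorphism onto $H_1(G,\R)$.

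Next I would verify the two compatibilities. Using Definition \ref{hermitian_inner_product} and the constancy of $a_e$, in the real case one gets $(\tilde\alpha,\tilde\beta)=\sum_e a_e b_e L(e)$, which is precisely $\Phi(\tilde\alpha)\cdot\Phi(\tilde\beta)$ for the edge inner product; hence $\Phi$ is an isometry for the inner product of Definition \ref{inner_prod_cohomology}. For the pairing, a harmonic form integrates over an oriented edge as $\int_e\tilde\alpha=a_eL(e)$, so for a cycle $p=\sum_e n_e e$ we obtain $\int_p\tilde\alpha=\sum_e n_e a_e L(e)=\Phi(\tilde\alpha)\cdot p$. Thus the duality pairing is computed by the edge inner product through $\Phi$.

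Finally I would assemble the conclusion. The edge inner product restricted to $H_1(G,\R)$ is the push-forward under $\Phi$ of the positive-definite inner product on $H^1(G,\R)$, hence positive definite and in particular nondegenerate. Given $q\in H_1(G,\R)$, symmetry of the edge product and the two compatibilities give, for every harmonic $\tilde\alpha$, the identity $(\Phi^{-1}(q),\tilde\alpha)=\Phi(\tilde\alpha)\cdot q=\int_q\tilde\alpha$, which says exactly that $\Phi^{-1}(q)$ is the metric dual of the functional $[\tilde\alpha]\mapsto\int_q\tilde\alpha$. Therefore $\Phi$ is the isomorphism $H_1(G,\R)\to H^1(G,\R)$ used to transport the $H^1$ inner product to $H_1(G,\R)$ via the pairing, and the induced inner product is $\langle q_1,q_2\rangle=(\Phi^{-1}q_1,\Phi^{-1}q_2)=q_1\cdot q_2$, the edge inner product. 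I expect the only real care needed to be in the second step: pinning down the orientation and sign conventions so that $\Phi(\tilde\alpha)$ genuinely lands in the cycle space and intertwines both structures; the closing identification is then routine linear algebra (equivalently, one may phrase it with Gram matrices, noting that in a basis $\omega_i$ of harmonic forms with dual basis $p_j$ of $H_1(G,\R)$ both the induced and the edge Gram matrices equal $G^{-1}$, where $G_{ij}=(\omega_i,\omega_j)$).
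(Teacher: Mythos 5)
Your proposal is correct, and it is worth noting that the paper itself states Lemma \ref{inner_product} without any proof, so there is no argument of the author's to compare against; your write-up supplies the missing verification. The construction you use is the natural one: the map $\Phi(\tilde\alpha)=\sum_e \tilde\alpha_e(X_1|_e)\,e$ is exactly the musical isomorphism implicit in the paper's assertion that the pairing $([\alpha],[p])\mapsto\int_p\alpha$ ``induces an inner product on $H_1(G,\R)$,'' and your three steps check precisely what needs checking. The sign bookkeeping in step one is right: with $e$ oriented along $X_1|_e$, the outward normal satisfies $\nu_{v,e}=X_1|_e$ at $v=t(e)$ and $\nu_{v,e}=-X_1|_e$ at $v=o(e)$, so the coefficient of $v$ in $\partial\Phi(\tilde\alpha)$ is $\sum_{e\sim v}\tilde\alpha_e(\nu_{v,e})=0$ by the defining boundary condition on $\Lambda^1$ (and a loop contributes $a_e-a_e=0$, consistent with the paper's conventions). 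The two computations $(\tilde\alpha,\tilde\beta)=\sum_e a_eb_eL(e)=\Phi(\tilde\alpha)\cdot\Phi(\tilde\beta)$ and $\int_p\tilde\alpha=\Phi(\tilde\alpha)\cdot p$ use the constancy of $a_e$ from the harmonicity lemma the paper cites from \cite{GaveauOkada91}, and they are also what makes your closing duality argument non-circular: nondegeneracy of the pairing is derived from positive-definiteness of the pushed-forward edge product rather than assumed. The final identification $\langle q_1,q_2\rangle=(\Phi^{-1}q_1,\Phi^{-1}q_2)=q_1\cdot q_2$, equivalently the observation that both Gram matrices equal $G^{-1}$ in dual bases, is routine and correct. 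One small presentational point: since the lemma's statement uses the relation $\overline{e}=-e$ in $C_1(G,\R)$, you could remark that the displayed case $e\cdot\overline{e'}=-L(e)$ is forced by bilinearity from $e\cdot e=L(e)$, so the edge product is well defined independently of the orientations you fixed via $X_1$; this removes any apparent dependence of your $\Phi$ on those choices.
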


This might seem an awkward inner product if one thinks of vectors but the better analogy would be to think of characteristic functions of 
sets in $\R^n$ with an $L^2$ inner product.

\begin{remark}
The inner product plays well with our notion of edges of positive and negative overlap in Definition \ref{overlap}. The inner product 
of two cycles is equal to the difference between the length of the edges of positive and negative overlap. 
\end{remark}

\begin{definition}
 The {\bf Albanese torus} of a quantum graph is the Riemannian torus 
\begin{equation*}
 Alb(G):= H_1(G,\R) \slash H_1(G,\Z)
\end{equation*}
with inner product as in \ref{inner_product}.
The {\bf Jacobian torus} of a quantum graph is the Riemannian torus 
\begin{equation*}
 Jac(G):= H^1(G,\R) \slash H^1(G,\Z)
\end{equation*}
with inner product as in \ref{inner_prod_cohomology}.
Note that these are dual tori.
\end{definition}

\chapter{The Bloch spectrum}

In this chapter we will introduce the Bloch spectrum, first using differential forms and then using characters of 
the fundamental group. We show that the two notions are equivalent.

\begin{remark}
 The spectrum of the standard Laplacian determines the Euler characteristic via the trace formula \ref{trace_formula}. The multiplicity 
of the eigenvalue zero is equal to the number of connected components, $\dim H^0(G,\Z)$. Thus the spectrum of the standard Laplacian
 determines the dimension of $H^1(G,\Z)$.
\end{remark}

\section{The Bloch spectrum via differential forms}

\begin{proposition}
\label{1form_invariance}
 Let  $\alpha \in \Lambda^1$ and $\psi \in \Lambda^0$ be real and let $\beta=\alpha+d\psi$. Let $f$ be an 
eigenfunction of $\Delta_{\alpha}$ with eigenvalue $\lambda$. Then $e^{-2\pi i\psi}f$ is an eigenfunction of $\Delta_{\beta}$ with the same 
eigenvalue. That is, two operators whose $1$-forms differ by an exact $1$-form have the same spectrum.
\end{proposition}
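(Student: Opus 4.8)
The plan is to recognize the map $f \mapsto e^{-2\pi i\psi}f$ as a \emph{unitary gauge transformation} that conjugates $\Delta_\alpha$ into $\Delta_\beta$. Once that is established, equality of spectra is automatic and the displayed eigenfunction drops out. Let me write $M_\psi$ for multiplication by $e^{-2\pi i\psi}$, acting both on $\Lambda^0$ and (edgewise) on $\Lambda^1$.

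First I would prove the pointwise intertwining identity
\begin{equation*}
d_\beta\bigl(e^{-2\pi i\psi}f\bigr) = e^{-2\pi i\psi}\,d_\alpha f
\end{equation*}
for every $f$, by a direct product-rule computation. Writing $g = e^{-2\pi i\psi}f$ and using the defining relation $(d_\beta g)(X) = X(g) + 2\pi i\,\beta(X)\,g$, the derivative term yields $e^{-2\pi i\psi}\bigl(X(f) - 2\pi i\,X(\psi)f\bigr)$, while $\beta(X) = \alpha(X) + (d\psi)(X) = \alpha(X) + X(\psi)$ contributes an extra $+2\pi i\,X(\psi)f$; the two $X(\psi)f$ terms cancel, leaving exactly $e^{-2\pi i\psi}\bigl(X(f) + 2\pi i\,\alpha(X)f\bigr) = e^{-2\pi i\psi}d_\alpha f$. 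In operator form this reads $d_\beta M_\psi = M_\psi d_\alpha$.

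Next I would record that $M_\psi$ is unitary on both $\Lambda^0$ and $\Lambda^1$, since $\lvert e^{-2\pi i\psi}\rvert = 1$ leaves the hermitian inner products of \ref{hermitian_inner_product} invariant, so $M_\psi^{-1} = M_{-\psi} = M_\psi^*$. Taking adjoints of $d_\beta = M_\psi d_\alpha M_\psi^{-1}$ and using unitarity then gives $d_\beta^* = M_\psi\,d_\alpha^*\,M_\psi^{-1}$, whence
\begin{equation*}
\Delta_\beta = d_\beta^* d_\beta = M_\psi\,d_\alpha^* d_\alpha\,M_\psi^{-1} = M_\psi\,\Delta_\alpha\,M_\psi^{-1}.
\end{equation*}
Thus $\Delta_\beta$ is unitarily equivalent to $\Delta_\alpha$, so they share spectra, and $\Delta_\alpha f = \lambda f$ forces $\Delta_\beta(M_\psi f) = M_\psi \Delta_\alpha f = \lambda\,M_\psi f$, i.e. $e^{-2\pi i\psi}f$ is the desired eigenfunction with eigenvalue $\lambda$.

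The main obstacle, and the one place the hypotheses are genuinely used, is checking that $M_\psi$ respects all the relevant \emph{boundary conditions}, so that the identities above hold between the correct domains rather than merely formally. Concretely I must verify that $M_\psi$ preserves the $\Lambda^1$ condition $\sum_{e\sim v}\omega_e(\nu_{v,e})=0$ and, most importantly, that $g = e^{-2\pi i\psi}f$ lies in $Dom(\Delta_\beta)$: continuity and the edgewise $W_2$ property are clear since $\psi$ is smooth on edges, but the Kirchhoff condition requires a short computation. Since $\psi$ and $f$ are single-valued at each vertex $v$, one finds $\sum_{e\sim v}\nu_{v,e}(g|_e) = e^{-2\pi i\psi(v)}\bigl(\sum_{e\sim v}\nu_{v,e}(f|_e) - 2\pi i\,f(v)\sum_{e\sim v}\nu_{v,e}(\psi|_e)\bigr)$, and both sums vanish precisely because $f\in\Lambda^0$ and $\psi\in\Lambda^0$ each satisfy Kirchhoff. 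This is exactly where the assumption $\psi\in\Lambda^0$ (not merely smooth) is needed.
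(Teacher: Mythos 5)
Your proposal is correct and is essentially the paper's own argument in conceptual dress: the paper's proof is exactly the chain $d_\beta\bigl(e^{-2\pi i\psi}f\bigr)=e^{-2\pi i\psi}d_\alpha f$ followed by $d_\beta^*\bigl(e^{-2\pi i\psi}\omega\bigr)=e^{-2\pi i\psi}d_\alpha^*\omega$, carried out by direct computation with the explicit formula $d_\beta^*\omega=d^*\omega-2\pi i\beta(X_1)\omega(X_1)$, whereas you obtain the second identity abstractly from unitarity of $M_\psi$ --- the same intertwining either way. Your closing verification that $M_\psi$ preserves the $\Lambda^1$ vertex condition and that $e^{-2\pi i\psi}f$ satisfies Kirchhoff (using $\psi\in\Lambda^0$, not merely smoothness) is a worthwhile domain check that the paper leaves implicit, and it is carried out correctly.
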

\begin{proof}
We have
\begin{eqnarray*}
&& d_{\beta}^*d_{\beta}\left(e^{-2\pi i\psi}f\right)\\
&=& d_{\beta}^*\left( d(e^{-2\pi i\psi}f)+2\pi i e^{-2\pi i\psi}f\alpha + 2\pi i e^{-2\pi i\psi}fd\psi \right)\\
&=& d_{\beta}^*\left( e^{-2\pi i\psi}df+2\pi i e^{-2\pi i\psi}f\alpha \right)\\
&=& d_{\beta}^* \left(e^{-2\pi i\psi}d_{\alpha}f\right) \\
&=& d^*\left(e^{-2\pi i\psi}d_{\alpha}f\right) - 2\pi i\alpha(X_1) e^{-2\pi i\psi}d_{\alpha}f(X_1) - 2\pi i d\psi(X_1) e^{-2\pi i\psi}d_{\alpha}f(X_1)\\
&=& e^{-2\pi i\psi}d^*d_{\alpha}f -2\pi i\alpha(X_1) e^{-2\pi i\psi}d_{\alpha}f(X_1) \\
&=& e^{-2\pi i\psi}d_{\alpha}^*d_{\alpha}f
\end{eqnarray*}
Thus $f$ is an eigenfunction for $\Delta_{\alpha}$ if and only if $e^{-2\pi i\psi}f$ is an eigenfunction for $\Delta_{\beta}$ with the same eigenvalue.
\end{proof}

\begin{remark}
 Note that $Spec_{\alpha}(G)$ depends only on the coset of $[\alpha]$ in $H^1_{dR}(G,\R) \slash H^1_{dR}(G,\Z)$. 
\end{remark}

\begin{definition}
We define the {\bf Bloch spectrum} $Spec_{Bl}(G)$ of a quantum graph to be the map that associates to each $[\alpha]$ the 
spectrum $Spec_{\alpha}(G)$ where $[\alpha] \in H^1_{dR}(G,\R) \slash H^1_{dR}(G,\Z)$.

Note that we assume that we only know $H^1_{dR}(G,\R) \slash H^1_{dR}(G,\Z)$ as an abstract torus without any Riemannian structure.
\end{definition}

\begin{definition}
 We say that two quantum graphs $G$ and $G'$ are {\bf Bloch isospectral} if there is a Lie group isomorphism  
$\Phi: H^1_{dR}(G,\R)/H^1_{dR}(G,\Z)  \ra H^1_{dR}(G',\R)/ H^1_{dR}(G',\Z)$ 
such that $Spec_{\alpha}(G)=Spec_{\Phi(\alpha)}(G')$ for all $[\alpha] \in H^1_{dR}(G,\R)/H^1_{dR}(G,\Z)$.
\end{definition}

\begin{remark}
If $G$ is a tree its entire Bloch spectrum just consists of the spectrum of the standard Laplacian $\Delta_0$
and thus does not contain any additional information. 
\end{remark}

\section{The Bloch spectrum via characters of the fundamental group}

Let $\tilde{G}$ be the universal cover of $G$ and let $\pi_1(G)$ denote the fundamental group. Then $\pi_1(G)$ acts by deck 
transformations on $\tilde{G}$. Let $\chi: \pi_1(G) \ra \C^*$ be a character of $\pi_1(G)$.

We will study functions $\tilde{f}: \tilde{G} \ra \C$ that are continuous, satisfy Kirchhoff boundary conditions at the vertices, 
and that obey the transformation law
\begin{equation*}
\tilde{f}(\gamma x)=\chi(\gamma) \tilde{f}(x) 
\end{equation*}
for all $x\in \tilde{G}$ and $\gamma \in \pi_1(G)$. We refer to the space of these functions as $\Lambda^0_{\chi}(\tilde{G})$.

We associate to the character $\chi$ the spectrum of the standard Laplacian $d^*d$ on $\tilde{G}$ restricted to functions in 
$\Lambda^0_{\chi}(\tilde{G})$, we will denote it by $Spec(G,\chi)$.

\begin{definition}
We call the map that associates to each character $\chi$ of $\pi_1(G)$ the spectrum $Spec(G, \chi)$ the {\bf $\pi_1$-spectrum} of $G$.
\end{definition}

\section{Equivalence of the two definitions}

\begin{theorem}
The Bloch spectrum $Spec_{Bl}(G)$ and the $\pi_1$-spectrum of a quantum graph are equal.
There is a one-to-one correspondence $[\alpha] \mapsto \chi_{\alpha}$ between $H^1_{dR}(G,\R) \slash H^1_{dR}(G,\Z)$ and the set of characters 
of $\pi_1(G)$. It is given by 
\begin{equation*}
 \chi_{\alpha}(\gamma)=e^{-2\pi i\int_{\gamma}\alpha}
\end{equation*}
It induces the equality  $Spec(G, \chi_{\alpha})=Spec_{\alpha}(G)$.
\end{theorem}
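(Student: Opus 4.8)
The plan is to verify the two halves of the statement separately: that $[\alpha]\mapsto\chi_\alpha$ is a group isomorphism from the torus onto the (unitary) characters of $\pi_1(G)$, and that matched pairs $([\alpha],\chi_\alpha)$ have equal spectra. For the correspondence I would first check that $\chi_\alpha$ is well defined: since a real closed $1$-form integrates to zero over a contractible loop, the period $\int_\gamma\alpha$ depends only on the homotopy class of $\gamma$, and additivity over concatenation gives $\chi_\alpha(\gamma_1\gamma_2)=\chi_\alpha(\gamma_1)\chi_\alpha(\gamma_2)$, so $\chi_\alpha\colon\pi_1(G)\to U(1)$ is a homomorphism. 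Replacing $\alpha$ by $\alpha+d\psi$ leaves every period fixed, and shifting $[\alpha]$ by an integral class changes each period by an integer, hence leaves $e^{-2\pi i\int_\gamma\alpha}$ unchanged; so $\chi_\alpha$ depends only on the class of $\alpha$ in $H^1_{dR}(G,\R)/H^1_{dR}(G,\Z)$.

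Bijectivity I would deduce from the de Rham period pairing. Integration identifies $H^1_{dR}(G,\R)$ with $\mathrm{Hom}(H_1(G,\Z),\R)$ and carries the integral lattice $H^1_{dR}(G,\Z)$ onto $\mathrm{Hom}(H_1(G,\Z),\Z)$, so the torus is
\[
H^1_{dR}(G,\R)/H^1_{dR}(G,\Z)\;\cong\;\mathrm{Hom}\bigl(H_1(G,\Z),\R/\Z\bigr)\;\cong\;\mathrm{Hom}\bigl(\pi_1(G),U(1)\bigr),
\]
the last isomorphism because $\pi_1(G)$ is free with abelianization $H_1(G,\Z)$ and $U(1)$ is abelian. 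Tracing the identifications shows the composite is exactly $[\alpha]\mapsto\chi_\alpha$, which is therefore a bijection onto the full group of unitary characters.

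For the spectral equality I would pass to the universal cover $\pi\colon\tilde G\to G$. As $\tilde G$ is a tree, the closed form $\pi^*\alpha$ is exact, say $\pi^*\alpha=d\Psi$ with $\Psi$ continuous on $\tilde G$; integrating along a deck orbit yields the transformation law $\Psi(\gamma x)=\Psi(x)-\int_\gamma\alpha$, the sign being fixed by the deck-action convention so as to reproduce $\chi_\alpha$. If $f$ is an eigenfunction of $\Delta_\alpha$ on $G$ for $\lambda$, then $f\circ\pi$ is an eigenfunction of $\Delta_{\pi^*\alpha}$ on $\tilde G$ for $\lambda$, and I would gauge it away by setting $\tilde f:=e^{2\pi i\Psi}(f\circ\pi)$. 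The computation in Proposition \ref{1form_invariance}, applied on $\tilde G$ where the form is globally exact, shows $\tilde f$ is an eigenfunction of the standard Laplacian $d^*d$ for the same $\lambda$, while the transformation law converts the $\pi_1$-invariance of $f\circ\pi$ into the equivariance $\tilde f(\gamma x)=\chi_\alpha(\gamma)\tilde f(x)$, i.e.\ $\tilde f\in\Lambda^0_{\chi_\alpha}(\tilde G)$. Multiplication by $e^{-2\pi i\Psi}$ inverts the construction, so $f\mapsto\tilde f$ is a linear isomorphism of $\lambda$-eigenspaces; running over all $\lambda$ gives $Spec_\alpha(G)=Spec(G,\chi_\alpha)$ with multiplicities.

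The step I expect to be the main obstacle is showing that this gauge map respects the operator domains, that is, that $\tilde f$ obeys the Kirchhoff condition at each vertex of $\tilde G$ exactly when $f$ does. Differentiating $\tilde f=e^{2\pi i\Psi}(f\circ\pi)$ and summing the inward normal derivatives at a vertex produces, besides the term $\sum_{e\sim v}\nu_{v,e}(f|_e)$, an extra contribution proportional to $\sum_{e\sim v}\nu_{v,e}(\Psi)=\sum_{e\sim v}\alpha_e(\nu_{v,e})$, which vanishes precisely by the boundary condition defining a $1$-form. Thus it is the compatibility of the Kirchhoff condition with the $1$-form boundary condition --- the same compatibility recorded just after the definition of $d_\alpha$ --- that upgrades the formal correspondence of eigenfunctions into an honest isomorphism of the domains, and checking it carefully is where the real work lies.
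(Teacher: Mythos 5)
Your proposal is correct and follows essentially the same route as the paper's proof: gauge-transform by a primitive of the pulled-back $1$-form on the universal cover (the paper's $\tilde g = e^{-2\pi i\tilde\varphi}\tilde f$ is your $\tilde f = e^{2\pi i\Psi}(f\circ\pi)$ up to the sign convention $\Psi=-\tilde\varphi$), use Stokes' theorem to get the $\chi_{\alpha}$-equivariance, and invoke Proposition \ref{1form_invariance} for the eigenvalue correspondence. The extra details you supply --- the period-pairing argument identifying the torus with the unitary characters, and the explicit check that Kirchhoff conditions are preserved because $\sum_{e\sim v}\alpha_e(\nu_{v,e})=0$ (equivalently, that the primitive of the pulled-back form lies in $\Lambda^0(\tilde G)$, which legitimizes applying Proposition \ref{1form_invariance}) --- are points the paper leaves implicit, and they are sound.
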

\begin{proof}
The integral does not depend on either the representative in $\pi_1(G)$ nor on the one in $H^1_{dR}(G,\R)$ so this gives a well defined map. 
We also have $\chi_{\alpha}(\gamma_1 \cdot \gamma_2)=\chi_{\alpha}(\gamma_1)\chi_{\alpha}(\gamma_2)$ so this defines a character.

Let $f: G \ra \C$ and let $\tilde{f} :\tilde{G} \ra \C$ be the lift of $f$. Let $\tilde{\alpha}$ 
be the pullback of $\alpha$. 
As $H^1(\tilde{G})$ is trivial $\tilde{\alpha}$ is exact and there exists a function $\tilde{\varphi}: \tilde{G} \ra \C$ such that 
$\tilde{\alpha}=d\tilde{\varphi}$.

Let $\tilde{g}(x):=e^{-2\pi i\tilde{\varphi}(x)}\tilde{f}(x)$. We claim that $\tilde{g}$ is an eigenfunction in the $\pi_1$-spectrum if and only 
if $\Delta_{\alpha} f=\lambda f$. We need to show that $\tilde{g} \in \Lambda^0_{\chi_{\alpha}}(\tilde{G})$ and that 
$\Delta \tilde{g}= \lambda \tilde{g}$.

Let $\gamma \in \pi_1(G)$ and let $\tilde{\gamma}$ be the (unique) path in $\tilde{G}$ from $x$ to $\gamma x$. We have 
$\tilde{\varphi}(\gamma x)-\tilde{\varphi}(x)=\int_{\tilde{\gamma}}d\tilde{\varphi}$ 
by Stoke's theorem. So we get 
\begin{equation*}
 \tilde{g}(\gamma x)=e^{-2\pi i\tilde{\varphi}(\gamma x)}\tilde{f}(\gamma x)=
e^{-2\pi i\int_{\tilde{\gamma}}\tilde{\alpha}}e^{-2\pi i\tilde{\varphi}(x)}\tilde{f}(x)= \chi_{\alpha}(\gamma)\tilde{g}(x)
\end{equation*}
By Proposition \ref{1form_invariance} we have\\
\begin{equation*}
\Delta \tilde{g} = \Delta e^{-2\pi i\tilde{\varphi}}\tilde{f} = e^{-2\pi i\tilde{\varphi}}\Delta_{\tilde{\alpha}} \tilde{f}
\end{equation*}
Thus $\tilde{g}$ is an eigenfunction with eigenvalue $\lambda$ if and only if $f$ is.
\end{proof}

\begin{remark}
This theorem mirrors a similar result for tori, see \cite{Guillemin90}. 
\end{remark}

\chapter{The homology of a quantum graph}

In this chapter we will analyze the spectrum and the trace formula and extract information about the homology of the graph from it.

Before we state and prove the main theorem of this chapter we need a few definitions and a technical lemma.

\begin{definition}
We call a periodic orbit {\bf minimal} if it has minimal length within its homology class.
\end{definition}

\begin{remark}
Note that in general a given element in the homology might have more than one minimal periodic orbit that represents it.

On the other hand, all closed walks that contain no edge repetitions, and in particular all cycles are minimal. A cycle is 
also the unique minimal periodic orbit in its homology class.
\end{remark}

\begin{definition}
\label{generic}
We call a $1$-form $\alpha$ {\bf generic} if 
the image of the ray  $t\alpha$ in the torus $H^1_{dR}(G,\R) \slash H^1_{dR}(G,\Z)$ is dense. The $\alpha$'s with this property are dense. We pick and fix a single generic 
$\alpha$. 
\end{definition}

\begin{definition}
\label{frequencies}
To the fixed generic $\alpha$ we associate the following data.
\begin{enumerate}
 \item Let $\Psi$ be the linear map $ \Psi: H_1(G,\Z) \ra \R$ given by 
$[p] \mapsto 2\pi \int_p \alpha $. It associates to each periodic orbit its magnetic flux. 
\item We call the absolute values of the magnetic fluxes $\mu:=|\Psi([p])|=2\pi \left|\int_p \alpha\right|$ the 
{\bf frequencies} associated to $\alpha$.
\item We will denote the length of the minimal periodic orbit(s) associated to a frequency $\mu$ by $l(\mu)$.
\end{enumerate}
\end{definition}

\begin{remark}
The map $\Psi$ is two-to-one (except at zero) because we picked $\alpha$ to be generic.
The set of all frequencies $\mu$ union their negatives $-\mu$ and zero
forms a finitely generated free abelian subgroup of $\R$ that 
is isomorphic to  $H_1(G, \Z)$ via the map $\Psi$. 
\end{remark}

\begin{lemma}
\label{determine_cosine}
 Let $f$ be a function that is a linear combination of several cosine waves with different (positive) frequencies. 
\begin{equation*}
 f(t)=\sum_{j=1}^k\nu_j\cos(\mu_j t)
\end{equation*}
Then the values $f(t)$ for $t \in [0,\varepsilon)$ determine both $k$ and the individual frequencies $\mu_1, \ldots, \mu_k$.
\end{lemma}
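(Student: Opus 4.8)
The plan is to exploit the real-analyticity of $f$ to upgrade the local data on $[0,\varepsilon)$ to global data on all of $\R$, and then to recover the frequencies by an orthogonality (time-averaging) argument. Throughout I assume, as is implicit in the statement, that every coefficient $\nu_j$ is nonzero, since otherwise the value of $k$ is not well defined.

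First I would observe that each summand $\nu_j\cos(\mu_j t)$ is the restriction to $\R$ of an entire function, so $f$ itself is real-analytic on $\R$. By the identity theorem for real-analytic functions, the restriction $f|_{[0,\varepsilon)}$ determines $f$ on all of $\R$: two such functions agreeing on $[0,\varepsilon)$ would have identical derivatives of every order at an interior point, hence would agree everywhere. Thus it suffices to show that $k$ and the frequencies $\mu_1,\dots,\mu_k$ are determined by $f$ as a function on all of $\R$ (equivalently on $[0,\infty)$).

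Second, for each candidate frequency $\mu>0$ I would form the time average
\[
c(\mu) := \lim_{T\to\infty}\frac{2}{T}\int_0^T f(t)\cos(\mu t)\,dt .
\]
Using the product-to-sum identity $\cos(\mu_j t)\cos(\mu t)=\tfrac12\big(\cos((\mu_j-\mu)t)+\cos((\mu_j+\mu)t)\big)$ together with the elementary fact that the average of $\cos(\omega t)$ over $[0,T]$ tends to $0$ for $\omega\neq 0$ and to $1$ for $\omega=0$, one computes that $c(\mu)=\nu_j$ when $\mu=\mu_j$ for some $j$ and $c(\mu)=0$ otherwise; here we use $\mu,\mu_j>0$, so the term $\mu_j+\mu$ never cancels. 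Since every $\nu_j\neq 0$, the set $\{\mu>0 : c(\mu)\neq 0\}$ equals $\{\mu_1,\dots,\mu_k\}$ exactly, and $k$ is its cardinality. This determines both $k$ and the individual frequencies, as required.

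I expect the only delicate point to be the passage from the interval $[0,\varepsilon)$ to the whole line; everything after that is standard harmonic analysis. An alternative that avoids integrals over an unbounded interval is to note that $f|_{[0,\varepsilon)}$ determines all derivatives $f^{(2m)}(0)=(-1)^m\sum_j\nu_j\mu_j^{2m}$, that is, the weighted power sums $s_m=\sum_j\nu_j x_j^m$ of the quantities $x_j=\mu_j^2$; one then recovers $k$ as the rank of the Hankel matrix $(s_{p+q})_{p,q}$ and the $x_j$ as the roots of the polynomial annihilating the associated recurrence (Prony's method). Either route completes the argument, but I would present the averaging argument as the cleaner of the two.
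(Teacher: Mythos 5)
Your proof is correct, but it takes a genuinely different route from the paper's. The paper works entirely with the jet of $f$ at $t=0$: from $f^{(2n)}(0)=(-1)^n\sum_{j}\nu_j\mu_j^{2n}$ it identifies $\mu_k^2$ as the unique $\lambda>0$ for which $\lim_{n\to\infty} f^{(2n)}(0)/(-\lambda)^n$ is finite and nonzero (the limit then being $\nu_k$), subtracts $\nu_k\cos(\mu_k t)$, and inducts downward through the frequencies --- a greedy extraction of the dominant growth rate of the weighted power sums. You instead invoke the identity theorem for real-analytic functions to propagate the data from $[0,\varepsilon)$ to all of $\R$, and then recover each pair $(\mu_j,\nu_j)$ by Bohr-type time averaging; both steps are sound (the sum is finite, so the limit defining $c(\mu)$ may be taken termwise, and $\mu+\mu_j>0$ kills the sum-frequency terms, as you note). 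What each approach buys: the paper's argument is purely local and elementary, never leaving the germ at $0$ and avoiding analytic continuation and infinite-time averages; yours is conceptually cleaner, recovers the coefficients $\nu_j$ simultaneously, and works verbatim in any setting where the identity-theorem step is available. Your parenthetical Prony/Hankel alternative is in fact the closest relative of the paper's proof --- both operate on the power sums $s_m=\sum_j\nu_j\mu_j^{2m}$, the paper extracting the largest root asymptotically where Prony extracts all roots algebraically. You are also right to flag that all $\nu_j\neq 0$ must be assumed for $k$ to be well defined; the paper makes the same implicit assumption, and in its application the relevant coefficients are shown to be nonzero.
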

\begin{proof}
 Assume without loss of generality that $0 < \mu_1 < \ldots < \mu_k$. We will show that we can determine $\mu_k$ and $\nu_k$ and then use 
induction.
We will look at the collection of derivatives of $f$ at $t=0$. We have
\begin{equation*}
 f^{(2n)}(0)=(-1)^n\sum_{j=1}^k\nu_j \mu_j^{2n}
\end{equation*}
There exists a unique number $\lambda >0$ such that\\
\begin{equation*}
 -\infty < \lim_{n \ra \infty} \frac{f^{(2n)}(0)}{(-\lambda)^n} < \infty \hspace{1cm} and \hspace{1cm} 
\lim_{n \ra \infty} \frac{f^{(2n)}(0)}{(-\lambda)^n} \neq 0
\end{equation*}
and we have $\lambda=\mu_k^2$ and $\lim_{n \ra \infty} \frac{f^{(2n)}(0)}{(-\lambda)^n}=\nu_k$. We can now look at the new function, 
\begin{equation*}
 \tilde{f}(t):=f(t)-\nu_k\cos(\mu_k t)
\end{equation*}
repeat the process, and determine $\mu_{k-1}$ and $\nu_{k-1}$. After finitely many steps we will end up with the constant function $0$.
\end{proof}

The following theorem is the key link between the Bloch spectrum and the quantum graph. All further theorems are just consequences 
of this one.

\begin{theorem}
\label{Bloch_H1}
 Given a generic $\alpha$, see Definition \ref{generic}, the part of the Bloch spectrum $Spec_{t\alpha}(G)$ for $t\in [0,\varepsilon)$ 
 determines the length of the minimal periodic orbit(s) of each element in $H_1(G, \Z)$. 
\end{theorem}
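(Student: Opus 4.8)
The plan is to exploit the Fourier-transformed trace formula from Corollary \ref{fourier_trace_formula}, applied to the operator $\Delta_{t\alpha}$ as $t$ ranges over $[0,\varepsilon)$. For each fixed $t$, that formula expresses $\sum_n e^{-il\lambda_n}$ as a sum of $\delta$-distributions supported at the periodic-orbit lengths $l_p$, with coefficients $\mathcal{A}_p(t\alpha)$. The crucial observation is that the supports $\{l_p\}$ do not move as $t$ varies, but the coefficients do: by Theorem \ref{trace_formula}, $\mathcal{A}_p(t\alpha)=\tilde{l}_pe^{2\pi it\int_p\alpha}\prod_{b\in p}\sigma_{t(b)}$, so the only $t$-dependence is the phase factor $e^{2\pi it\int_p\alpha}$. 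Hence, reading the Bloch spectra $Spec_{t\alpha}(G)$ for all $t\in[0,\varepsilon)$ and taking Fourier transforms, I recover, for each fixed length value $l$, the function of $t$ obtained by summing the coefficients of all periodic orbits of that length.

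First I would fix a length $l$ and collect the total coefficient sitting at $l$ in the trace formula, as a function of $t$. Grouping periodic orbits of length $l$ by their magnetic flux and using $\int_p\alpha=\Psi([p])/2\pi$ (see Definition \ref{frequencies}), the real part of this coefficient function becomes a finite sum of terms $\nu_j\cos(\mu_j t)$, where the $\mu_j$ range over the frequencies (absolute fluxes) realized by closed walks of length exactly $l$, and each $\nu_j$ is the sum of the real coefficients $\tilde{l}_p\prod_b\sigma_{t(b)}$ over orbits of that length and flux. Here the two-to-one nature of $\Psi$ (orbits $p$ and $\overline{p}$ have opposite flux and conjugate coefficients) is what produces genuine cosines rather than complex exponentials. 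Applying Lemma \ref{determine_cosine} to this function on $[0,\varepsilon)$ then recovers the set of frequencies $\mu_j$ that are achieved by \emph{some} periodic orbit of length $l$.

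Next I would assemble this length-by-length information into the desired statement. For each frequency $\mu$ (equivalently, each homology class via the isomorphism $\Psi\colon H_1(G,\Z)\to\R$ identifying classes with fluxes), I look at all lengths $l$ at which $\mu$ appears as a detected frequency; the length of the minimal periodic orbit(s) representing the homology class $\Psi^{-1}(\pm\mu)$ is simply the smallest such $l$. The point is that a periodic orbit with flux $\mu$ represents the homology class $\Psi^{-1}(\mu)$, and the shortest orbit in that class is by definition the minimal periodic orbit for that homology class; so minimizing $l$ over detected occurrences of $\mu$ gives exactly $l(\mu)$.

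The main obstacle I anticipate is the possibility of coefficient cancellation: at a fixed length $l$ and fixed frequency $\mu$, several distinct periodic orbits may contribute, and their coefficients $\nu_j$ could in principle sum to zero, making that frequency invisible in the trace formula at length $l$. This is precisely the phenomenon illustrated in Figure \ref{qgraph_vanish_trace}. The saving point, and the reason genericity of $\alpha$ matters, is that the minimal periodic orbit in a homology class is a \emph{cycle} with no backtracking (as noted after the definition of minimal orbits), so by Lemma \ref{positivity} its coefficient is strictly positive; moreover distinct homology classes get distinct fluxes under the generic $\alpha$, so a minimal orbit's flux is not shared by shorter orbits in other classes that could interfere. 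I would need to argue carefully that at the minimal length for a given flux the contributing orbits cannot destructively cancel — the detected frequency $\mu$ genuinely first appears at $l(\mu)$ with nonzero coefficient — and that is the delicate heart of the proof.
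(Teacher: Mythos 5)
Your proposal is correct and follows essentially the same route as the paper's own proof: the Fourier-transformed trace formula along the ray $t\alpha$, $t$-independent supports with $t$-dependent coefficients, conjugate pairs $p,\overline{p}$ producing genuine cosines, Lemma \ref{determine_cosine} applied length by length, and the first appearance of each frequency giving $l(\mu)$, with non-cancellation at the minimal length secured by positivity of the coefficients. One small correction to your final paragraph: a minimal periodic orbit need not be a cycle (consider the class $2[c]$, or the sum of two disjoint cycles joined by a path), but minimality does force the absence of backtracks --- removing a backtrack shortens the orbit within its homology class --- and that weaker property is exactly what the paper uses, via Lemma \ref{positivity}, to conclude every minimal orbit has $\nu>0$ so the sum of their contributions cannot vanish, which closes the ``delicate heart'' you flagged.
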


\begin{proof} 
We will show we can read off the set of frequencies $\mu$, see Definition \ref{frequencies}, associated to the generic $\alpha$ 
from the Bloch spectrum and determine the length $l(\mu)$ for each frequency.

We will look at the continuous family of $1$-forms $\alpha(t)=t\alpha$ and the associated 
operators $\Delta_{\alpha(t)}$ for our fixed generic $\alpha$ and $t\in [0,\varepsilon)$. If we plug the eigenvalues of these operators 
into the Fourier transform of the trace formula we get a family of distributions. Each of these distributions is a locally 
finite sum of Dirac-$\delta$-distributions (plus a constant term). The support of each of these $\delta$-distributions is the length of 
the periodic orbit(s) it is associated to and thus depends only on the underlying quantum graph and not on the $1$-form, see  
\ref{fourier_trace_formula}. 

Any periodic orbit $p$ that is homologically non-trivial has a corresponding partner $\overline{p}$ which is the 
same closed walk with opposite orientation. Their coefficients are related by 
$\mathcal{A}_p(\alpha)=\overline{\mathcal{A}_{\overline{p}}(\alpha)}$ 
as the vertex scattering coefficients and the length are the same and the magnetic flux changes sign. Thus for each such pair 
we would observe a factor of the form $2{\rm Re} \mathcal{A}_p(t\alpha)$ in the Fourier 
transform of the trace formulae for $\Delta_{\alpha(t)}$. We have 
\begin{equation*}
 2{\rm Re} \mathcal{A}_p(t)=2{\rm Re} \left( \tilde{l}_pe^{2\pi i t\int_p\alpha}\prod_{b\in p} \sigma_{t(b)}\right) =\nu \cos\left(2\pi \int_p\alpha t\right)
\end{equation*}
by Theorem \ref{trace_formula} where $\nu=2\tilde{l}_p\prod_{b\in p} \sigma_{t(b)}$. So for each such pair of periodic orbits 
there is a magnetic flux $\Psi([p])=2\pi \int_p\alpha$ 
 and a factor $\nu$ that is always nonzero. Moreover the factor $\nu$  is positive if the periodic orbit 
contains no backtracks. As the magnetic flux appears in a cosine wave we can only know its absolute value, that is, 
the frequency, see Definition \ref{frequencies}. 

Pick a length of periodic orbits $l$. 
If we look at the family of $1$-forms $\alpha(t)$ we get a continuous family of coefficients $\mathcal{A}^l(t)=\sum_{l_p=l}\mathcal{A}_p(t)$.
As we did not make any assumptions on the underlying quantum graph there can be 
multiple periodic orbits with the same length.
Thus each coefficient $\mathcal{A}^l(t)$ is a linear combination 
of a constant term and several cosine waves with 
different frequencies. The constant part comes from homologically trivial periodic orbits of length $l$. The cosine waves correspond to the 
homologically nontrivial periodic orbits of length $l$. We can now apply Lemma \ref{determine_cosine} to the function  $\mathcal{A}^l(t)$ 
and read off all the frequencies occurring at that length.

As we go through the different lengths in the spectra starting at zero we will pick up a collection of different frequencies. Each 
frequency will appear multiple times at different lengths since there are multiple periodic orbits that represent the same element in the homology and 
thus have the same frequency.

The frequency corresponding to a particular element in $H_1(G, \Z)$ can only be realized by periodic orbits that 
represent this element in the homology because we picked $\alpha$ to be generic, see \ref{generic}. Going through the lengths 
starting at zero this frequency can appear at the earliest at 
the length of the corresponding minimal periodic orbit(s). The minimal periodic orbits need not be unique but as they are minimal 
they contain no backtracks. 
Thus their $\nu$ coefficients are all strictly bigger than $0$, so their sum cannot vanish and the frequency will indeed 
appear in the coefficient $\mathcal{A}^l(t)$ at the minimal length. 
This gives us the length $l(\mu)$ associated to each frequency $\mu$. 
\end{proof}

\begin{remark}
As we picked $\alpha$ to be generic, the maximal number of frequencies that are linearly independet over $\Q$ is 
equal to $\dim H_1(G,\Z)$. 
Thus we can observe from the number of rationally independent frequencies whether an arbitrary $\alpha$ is generic or not.
\end{remark}

\begin{remark}
Without any genericity assumptions on the edge lengths in the quantum graph it can happen that there are multiple non-minimal periodic orbits that are homologous and of 
the same length. We would not be able to distinguish them directly in the trace formula, it can even happen that their $\nu$-coefficients cancel out 
and we would not observe them at all. 
\end{remark}

\chapter{Determining graph properties from the Bloch spectrum}

We will now use the information gained in the last chapter and translate it into graph properties that are determined by the Bloch spectrum.

\section{The Albanese torus}

\begin{lemma}
\label{recognize_cycle}
Given a frequency $\mu$ the following two statements are equivalent:
\begin{enumerate}
\item The minimal periodic orbit associated to $\mu$ is a cycle in the graph.
 \item  There are no two frequencies $\kappa$, $\kappa'$, $\mu= |\kappa \pm \kappa'|$ with the property that 
$l(\kappa)+l(\kappa') \le l(\mu)$.
\end{enumerate}
\end{lemma}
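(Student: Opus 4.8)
The plan is to translate everything into the homology chain group, where the statement becomes a question about decomposing a cycle. Since $G$ is a $1$-dimensional CW complex there are no $2$-cells, so $H_1(G,\Z)=\ker\partial\subseteq C_1(G,\Z)$ and a homology class is literally a $1$-chain $c=\sum_e c_e\,e$. Because $\alpha$ is generic the map $\Psi$ is injective, so a frequency $\mu$ is the same data as an unordered pair $\{c,-c\}$ of nonzero classes, and $l(\mu)=l(c)$ is the length of a minimal periodic orbit in the class $c$. A relation $\mu=|\kappa\pm\kappa'|$ then says exactly $c=c_1+c_2$ in $H_1(G,\Z)$ for the classes $c_1,c_2$ underlying $\kappa,\kappa'$ (after possibly reversing orientations, which changes neither the frequencies nor the minimal lengths). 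The one tool I would introduce is the weighted $\ell^1$ length $\|z\|:=\sum_e|z_e|\,L(e)$ on $C_1(G,\R)$. Since a periodic orbit traverses each edge a number of times at least the absolute value of its net coefficient, every class satisfies $l(c)\ge\|c\|$, with equality whenever $c$ is represented by an embedded cycle, as then each edge appears once and no traversal is wasted.

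\textbf{(ii)$\Rightarrow$(i), by contraposition.} Suppose the minimal periodic orbit $p$ representing $\mu$ is \emph{not} a cycle. A minimal orbit has no backtracks, so a repeated edge would force a repeated vertex; hence in any case $p$ visits some vertex $v$ twice. Cutting $p$ at $v$ writes it as a concatenation of two closed walks $p_1,p_2$ with $[p]=[p_1]+[p_2]$ and $l(p)=l(p_1)+l(p_2)$. If one class, say $[p_1]$, were zero then $[p_2]=[p]$ and $l(p_2)<l(p)$ would contradict minimality of $p$; so $[p_1],[p_2]$ are both nonzero and give genuine frequencies $\kappa=|\Psi([p_1])|$, $\kappa'=|\Psi([p_2])|$ with $\mu=|\kappa\pm\kappa'|$. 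Then $l(\kappa)+l(\kappa')\le l(p_1)+l(p_2)=l(p)=l(\mu)$, so $\mu$ violates (ii).

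\textbf{(i)$\Rightarrow$(ii).} Now let the minimal orbit of $\mu$ be an embedded cycle $c$, so that $l(\mu)=\|c\|$. For any decomposition $c=c_1+c_2$ into nonzero classes the triangle inequality for $\|\cdot\|$ gives
\[
 l(\kappa)+l(\kappa')=l(c_1)+l(c_2)\ \ge\ \|c_1\|+\|c_2\|\ \ge\ \|c_1+c_2\|=\|c\|=l(\mu),
\]
so the inequality forbidden by (ii) could hold only with equality throughout. Equality in the $\ell^1$ triangle inequality forces $c_1$ and $c_2$ to have no cancelling edge, i.e. their supports partition the edge set of $c$ and each agrees with $c$ where it is nonzero. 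But a nonempty proper sub-chain of an embedded cycle is an arc, whose boundary is its two endpoints and hence nonzero; this contradicts $c_1\in\ker\partial$. Therefore no such decomposition attains equality, the inequality is strict, and (ii) holds.

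\textbf{Main obstacle.} The splitting direction is routine once one observes that a non-cycle must self-intersect at a vertex. The real content is the equality analysis in (i)$\Rightarrow$(ii): the norm inequality already pins the only possible failure of (ii) to the equality case, and one then needs the genuinely combinatorial fact that restricting an embedded cycle to a proper nonempty edge subset never yields a $1$-cycle. Secondary care is needed in bookkeeping the $\pm$ signs when passing between $|\kappa\pm\kappa'|$ and $c=c_1+c_2$, and in invoking injectivity of $\Psi$ (genericity of $\alpha$) to be certain the two pieces correspond to actual frequencies.
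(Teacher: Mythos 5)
Your proof is correct, and while your splitting direction ((ii)$\Rightarrow$(i) by contraposition) coincides with the paper's --- cut the non-cycle minimal orbit at a repeated vertex into two closed walks --- your other direction takes a genuinely different route. The paper argues the contrapositive geometrically: given a decomposition with $l(\kappa)+l(\kappa')\le l(\mu)$, it splits into the equality case (the minimal orbits $c_{\kappa}$ and $c_{\kappa'}$ must share a vertex and their union is the minimal orbit for $\mu$, hence not a cycle) and the strict case (the two orbits are disjoint and the minimal orbit for $\mu$ must traverse the connecting edges twice, hence not a cycle). You instead introduce the weighted $\ell^1$ norm $\|z\|=\sum_e |z_e|\,L(e)$ on $C_1(G,\R)$, observe $l(c)\ge\|c\|$ with equality for embedded cycles, conclude from the triangle inequality that the forbidden inequality could only occur with equality throughout, and dispose of the equality case via the combinatorial fact that a proper nonempty subchain of an embedded cycle has nonzero boundary. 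This buys rigor: the paper's equality case tacitly assumes the minimal orbit for $\mu$ is exactly $c_{\kappa}\cup c_{\kappa'}$, and its strict case assumes the minimal orbit realizes a connection between the two disjoint cycles --- both plausible but not justified there --- whereas your norm argument needs no such claims and moreover shows that when (i) holds the strict case is outright impossible. What the paper's version buys in exchange is the geometric picture of how the two sub-orbits sit in the graph, which it reuses later (Remark \ref{check_overlap_sign}, Lemma \ref{decomposition}). You also patch a small gap the paper leaves in the splitting direction by checking that both pieces are homologically nontrivial (otherwise minimality of $p$ would fail), so that both genuinely correspond to frequencies. One cosmetic fix: a proper nonempty subchain of an embedded cycle is in general a disjoint union of arcs, not a single arc; your argument is unaffected, since the arcs' endpoints are pairwise distinct vertices of the embedded cycle and the boundary is therefore still nonzero.
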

\begin{proof}
 We will prove both directions by contradiction. 

Assume the minimal periodic orbit associated to $\mu$ is not a cycle, then it has to go through some vertex at 
least twice. Thus we can separate the periodic orbit into two shorter periodic orbits. Let $\kappa$ and $\kappa'$ be the  frequencies associated to the two pieces. 
Then $\mu=|\kappa \pm \kappa'|$ and because the two pieces are not necessarily minimal $l(\kappa)+l(\kappa') \le l(\mu)$.

Conversely, suppose $\mu$ admits a decomposition $\mu=|\kappa \pm \kappa'|$. Let $c_{\mu}$, $c_{\kappa}$ and $c_{\kappa'}$ 
denote the minimal periodic orbits associated to the frequencies.  
If we have $l(\kappa)+l(\kappa') = l(\mu)$ then $c_{\mu}=c_{\kappa}\cup c_{\kappa'}$ and $c_{\kappa}$ and $c_{\kappa'}$ must have a vertex in common so $c_{\mu}$ is not a 
cycle. If we have $l(\kappa)+l(\kappa') < l(\mu)$ then
 the periodic orbits $c_{\kappa}$ and $c_{\kappa'}$ are disjoint and $c_{\mu}$ realizes the connection between them so it uses the edges 
between them twice and is not a cycle.
\end{proof}

\begin{remark}
\label{check_overlap_sign}
 Let $\mu_1$ and $\mu_2$ be two frequencies such that the associated minimal periodic orbits $c_1$ and $c_2$ are cycles. 
These cycles have an orientation induced from the $1$-form $\alpha$. The frequency $\mu_1+\mu_2$ corresponds to the pair of periodic orbits 
that is homologous to $c_1\cup c_2$ and $-(c_1\cup c_2)$. Thus if $l(\mu_1+\mu_2) < l(\mu_1)+l(\mu_2)$ 
then $c_1$ and $c_2$ have edges of negative overlap. The frequency $|\mu_1-\mu_2|$ corresponds to the pair of periodic orbits 
that is homologous to $c_1\cup (-c_2)$ and $(-c_1)\cup c_2$. Thus if $l(\mu_1-\mu_2) < l(\mu_1)+l(\mu_2)$ 
then $c_1$ and $c_2$ have edges of positive overlap. 
\end{remark}

\begin{theorem}
\label{determine_Albanese}
The Bloch spectrum of $G$ determines the Albanese torus $Alb(G)$ as a Riemannian manifold.
\end{theorem}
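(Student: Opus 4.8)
The plan is to reconstruct $Alb(G)=H_1(G,\R)/H_1(G,\Z)$ as a flat Riemannian torus. Up to isometry such a torus is determined by the Gram matrix of any $\Z$-basis of the lattice $H_1(G,\Z)$ with respect to the inner product of Lemma~\ref{inner_product}. So it suffices to (i) recover the lattice $H_1(G,\Z)$ together with a distinguished basis from the Bloch spectrum, and (ii) compute all pairwise inner products of that basis.

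For step (i) I would run Theorem~\ref{Bloch_H1} with the fixed generic $\alpha$. This hands us, for every class $h\in H_1(G,\Z)$, its frequency $\mu=|\Psi(h)|$ and the length $l(\mu)$ of its minimal periodic orbit. As noted after Theorem~\ref{Bloch_H1}, the signed fluxes $\Psi(H_1(G,\Z))$ form a rank-$n$ subgroup of $\R$ isomorphic to $H_1(G,\Z)$, so the abelian group $H_1(G,\Z)$ is recovered together with its identification to frequencies. Using Lemma~\ref{recognize_cycle} I would single out the classes represented by genuine cycles, and using Lemma~\ref{basis_of_cycles} I would select among them a $\Z$-basis $c_1,\dots,c_n$ consisting of cycles; such a basis exists, and the basis condition can be checked inside the reconstructed group.

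For step (ii) the diagonal entries are immediate: a cycle $c_i$ traverses each of its edges once, so $c_i\cdot c_i=L(c_i)=l(\mu_i)$. For the off-diagonal entries I would use that, by the remark following Lemma~\ref{inner_product}, $c_i\cdot c_j$ equals the total length of the edges of positive overlap minus that of the edges of negative overlap. Refining the sign analysis of Remark~\ref{check_overlap_sign} to a quantitative statement, the minimal orbit lengths of the classes $c_i\pm c_j$ satisfy $l(\mu_i)+l(\mu_j)-l(\mu_i+\mu_j)=2N_{ij}$ and $l(\mu_i)+l(\mu_j)-l(|\mu_i-\mu_j|)=2P_{ij}$, where $P_{ij}$ and $N_{ij}$ are the positive and negative overlap lengths; subtracting gives the clean identity
\begin{equation*}
c_i\cdot c_j=\tfrac12\bigl(l(\mu_i+\mu_j)-l(|\mu_i-\mu_j|)\bigr).
\end{equation*}
The frequencies $\mu_i+\mu_j$ and $|\mu_i-\mu_j|$ are those of the classes $c_i+c_j$ and $c_i-c_j$, so the right-hand side is read directly off the Bloch spectrum. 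This fills in the full Gram matrix and hence determines $Alb(G)$ up to isometry.

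The heart of the argument, and the step I expect to be the main obstacle, is the quantitative overlap formula. It reduces to identifying the minimal periodic orbit length of a class $h$ with the weighted count $\sum_e |h_e|\,L(e)$, which holds whenever the support of $h$ is connected, since the balanced multigraph defined by $h$ then carries an Eulerian circuit. The delicate case is when a class $c_i\pm c_j$ has disconnected support, so that a single closed walk must pay an extra connecting length; I would argue that this penalty enters $l(\mu_i+\mu_j)$ and $l(|\mu_i-\mu_j|)$ symmetrically and therefore cancels in their difference, leaving the identity above intact. Making this cancellation rigorous --- for instance by observing that basis cycles lying in different blocks have zero overlap and that two overlapping basis cycles may be taken inside a common $2$-connected block --- is where the care is needed.
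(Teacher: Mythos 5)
Your reconstruction coincides with the paper's proof step for step: the paper also picks a minimal set of generators $\mu_1,\ldots,\mu_n$ whose associated minimal periodic orbits are cycles (via Lemma \ref{basis_of_cycles}), sets $|v_i|^2=l(\mu_i)$, defines the off-diagonal entries by the same polarization identity $2\langle v_i,v_j\rangle=l(\mu_i+\mu_j)-l(|\mu_i-\mu_j|)$, interprets this as twice the positive-minus-negative overlap length, and concludes via Lemma \ref{inner_product}. The only place you depart from the paper is your final paragraph, where you try to actually justify the identity $l([h])=\sum_e|h_e|L(e)$ underlying the length computations; the paper asserts this outright.

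Your instinct that this is ``where the care is needed'' is exactly right, but your proposed patch fails: the connecting penalty need not enter $l(\mu_i+\mu_j)$ and $l(|\mu_i-\mu_j|)$ symmetrically, and your block-based fallback does not save it because the problem already occurs inside a single $2$-connected block. Concretely, take vertices $a,b,c,d$ with an edge $e_1$ from $a$ to $b$, an edge $e_2$ from $c$ to $d$, parallel edges $f_1,g_1$ from $b$ to $c$, and parallel edges $f_2,g_2$ from $d$ to $a$ (every vertex has degree $3$; the graph is $2$-connected). Let $c_i$ be the $4$-cycle $a\to b\to c\to d\to a$ through $e_1,f_1,e_2,f_2$ and $c_j$ the $4$-cycle $b\to a\to d\to c\to b$ through $e_1,g_2,e_2,g_1$, so $e_1,e_2$ are edges of negative overlap and $c_i\cdot c_j=-\bigl(L(e_1)+L(e_2)\bigr)$. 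Then $c_i+c_j=(f_1-g_1)+(f_2-g_2)$ has \emph{disconnected} support (two disjoint bigons), so its minimal orbit pays a connector: $l(\mu_i+\mu_j)=L(f_1)+L(g_1)+L(f_2)+L(g_2)+2\min\bigl(L(e_1),L(e_2)\bigr)$, while $c_i-c_j$ has connected support and no penalty: $l(|\mu_i-\mu_j|)=2L(e_1)+2L(e_2)+L(f_1)+L(g_1)+L(f_2)+L(g_2)$. Your formula (and the paper's) then returns $-\max\bigl(L(e_1),L(e_2)\bigr)$ instead of $-\bigl(L(e_1)+L(e_2)\bigr)$. Since $c_i,c_j$ extend to a basis of cycles, the Gram-matrix recipe is simply not valid for an \emph{arbitrary} cycle basis; note that in this example it does produce the correct Albanese torus for the basis consisting of the two bigons together with $c_i$. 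So the genuine repair is not a cancellation argument but a restriction on the admissible bases (together with an argument that a good basis can be recognized from the spectral data) --- a point on which the paper's own proof is silent, so your write-up is no less complete than the original, but the specific cancellation claim you propose would not survive scrutiny.
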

\begin{proof}
Pick a minimal set of generators $\mu_1, \ldots, \mu_n$ of the group spanned by the frequencies such that the associated minimal periodic orbits 
are all cycles. Such a basis exists by Lemma \ref{basis_of_cycles}. 
Associate to them a set of 
vectors $v_1, \ldots, v_n$ satisfying $|v_i|^2:=l(\mu_i)$ and $2\langle v_i,v_j \rangle := l(\mu_i+\mu_j)-l(|\mu_i-\mu_j|)$ for all $i \neq j$. 
This uniquely determines a torus with spanning vectors $v_1, \ldots, v_n$. 

If the cycles associated to $\mu_i$ and $\mu_j$ share no edges we have $l(\mu_i+\mu_j)=l(|\mu_i-\mu_j|)\ge l(\mu_i)+l(\mu_j)$ so the associated vectors 
 are orthogonal. 

If the cycles associated to $\mu_i$ 
and $\mu_j$ share edges the length $l(\mu_i+\mu_j)$ is twice the length of all edges of positive overlap plus the length of all 
edges that are part of one cycle but not the other. The length $l(|\mu_i-\mu_j|)$ is twice the length of all edges of negative overlap 
plus the length of all edges that are part of one cycle but not the other. Thus $l(\mu_i+\mu_j)-l(|\mu_i-\mu_j|)$ is twice the difference 
of the length of edges of positive overlap and the length of edges of negative overlap.

Therefore the torus is isomorphic to the Albanese torus of the quantum graph by Lemma \ref{inner_product}. 
\end{proof}

The complexity of a graph is the number of spanning trees.
\begin{corollary}
 If the quantum graph is equilateral the Bloch spectrum determines the complexity of the graph.
\end{corollary}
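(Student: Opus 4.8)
The plan is to reduce the statement to a volume computation on the Albanese torus and then invoke the matrix--tree theorem. Theorem \ref{determine_Albanese} already tells us that the Bloch spectrum determines $Alb(G) = H_1(G,\R)/H_1(G,\Z)$ as a Riemannian manifold, so in particular it determines every metric invariant of this torus, including its total volume. Hence it suffices to express the complexity $\kappa(G)$ (the number of spanning trees) purely in terms of the volume of $Alb(G)$ in the equilateral case.

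First I would fix the normalization: since the graph is equilateral and the common edge length equals the spectrally determined minimum edge length, we may take every edge to have length $1$. With this normalization the inner product on $H_1(G,\R) \subset C_1(G,\Z)$ from Lemma \ref{inner_product} becomes the standard integer bilinear form, namely $e\cdot e' = \pm 1$ on coinciding (oppositely oriented) edges and $0$ otherwise. Choosing a basis of $H_1(G,\Z)$ by oriented cycles as in Lemma \ref{basis_of_cycles}, the lattice $H_1(G,\Z)$ sits inside $C_1(G,\Z)$ with Gram matrix $B^{T}B$, where the columns of $B$ are the chosen cycles written in the edge basis. The covolume of the lattice, i.e. $\mathrm{vol}(Alb(G))$, is then $\sqrt{\det(B^{T}B)}$.

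The key algebraic input is the classical fact, which is the content of the cited theorem of \cite{KotaniSunada00}, that $\det(B^{T}B) = \kappa(G)$: a version of the matrix--tree theorem identifies the Gram determinant of an integral cycle basis with the number of spanning trees. Thus $\mathrm{vol}(Alb(G))^{2} = \kappa(G)$. Since the left-hand side is read off from the Riemannian torus $Alb(G)$, and that torus is determined by the Bloch spectrum, the complexity is determined as well.

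I expect the only real subtlety to be the bookkeeping around the edge-length normalization and the precise form of the Kotani--Sunada/matrix--tree identity. For a general common edge length $\ell$ one gets $\mathrm{vol}(Alb(G)) = \ell^{n/2}\sqrt{\kappa(G)}$ with $n = \dim H_1(G,\R)$, and since both $\ell$ (the minimum edge length) and $n$ are already spectral invariants, the same argument still recovers $\kappa(G)$. Everything else, namely that volume is a Riemannian invariant and that Lemma \ref{inner_product} reduces to the combinatorial form in the equilateral case, is immediate.
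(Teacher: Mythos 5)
Your proof is correct and takes essentially the same route as the paper: the paper likewise deduces the corollary from Theorem \ref{determine_Albanese} together with the Kotani--Sunada theorem relating complexity to the volume of the Albanese torus, noting that for an equilateral quantum graph this torus coincides with that of the underlying combinatorial graph. In fact your version states the identity in the correct direction, $\kappa(G)=\mathrm{vol}(Alb(G))^{2}$ (equivalently $\mathrm{vol}(Alb(G))=\sqrt{\kappa(G)}$), whereas the paper's proof writes $K(G)=\sqrt{\mathrm{vol}(Alb(G))}$ with the square root on the wrong side, and your explicit treatment of the edge-length normalization via $\mathrm{vol}(Alb(G))=\ell^{n/2}\sqrt{\kappa(G)}$ fills in a detail the paper glosses over.
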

\begin{proof}
This follows directly from a theorem in \cite{KotaniSunada00}. For combinatorial graphs the complexity of the graph is given by 
$K(G)=\sqrt{vol(Alb(G))}$. The Albanese torus of an equilateral quantum graph is identical to the Albanese torus of 
the underlying combinatorial graph.
\end{proof}

\begin{remark}
Leaves in a graph are invisible to the homology. So it is not clear whether the entire Bloch spectrum gives us any more information about them 
than the spectrum of a single Schr\"odinger type operator. There are examples of trees that are isospectral for the standard Laplacian,
 see for example \cite{GutkinSmilansky01}. 
\end{remark}

\begin{remark}
The Albanese torus distinguishes the isospectral examples of van Below in \cite{vonBelow99}. Thus the spectrum of a single Schr\"odinger type 
operator does not determine the Albanese torus. In one of the two graphs  
two periodic orbits of length $3$ can be composed to get a periodic orbit of length $4$. Thus the lattice that corresponds to the Albanese 
torus contains two vectors of length $3$ whose sum has length $4$. In the other graph this is not the case. In particular these two graphs are 
not Bloch isospectral by Theorem \ref{determine_Albanese}.
\end{remark}

\section{The block structure}

\begin{theorem}
\label{block_structure}
The Bloch spectrum of a leafless quantum graph determines its block structure (see Definition \ref{define_block}).
It also determines the dimension of the homology of each block.
\end{theorem}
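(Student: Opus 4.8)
The plan is to read off from the Bloch spectrum the complete list of cycles of $G$ together with their lengths and their mutual edge–overlaps, to group the cycles into blocks, to extract the homology dimension of each block, and finally to recover the tree–like arrangement of Definition \ref{define_block} from the lengths of the periodic orbits that join cycles lying in distinct blocks. First I would invoke Theorem \ref{Bloch_H1} to obtain, for each frequency $\mu$ (equivalently each class in $H_1(G,\Z)$), the length $l(\mu)$ of its minimal periodic orbit, and then use Lemma \ref{recognize_cycle} to single out precisely those frequencies whose minimal periodic orbit is an honest cycle. Since distinct graph cycles represent distinct homology classes (for a graph $H_1=Z_1$) and a cycle is the unique minimal periodic orbit of its class, this recovers the full list of cycles of $G$ with their lengths. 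Remark \ref{check_overlap_sign} then decides, for any two such cycles $c,c'$, whether they share an edge, using only $l(\mu)$, $l(\mu')$, $l(\mu+\mu')$ and $l(|\mu-\mu'|)$.

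Next I would form the graph on the set of cycles in which two cycles are adjacent exactly when they share an edge. Because the block equivalence relation is the one generated by edge–sharing, and cycles in different blocks are edge–disjoint (meeting in at most a cut vertex, by Remark \ref{cycle_single_block}), the connected components of this graph are exactly the blocks of $G$; this determines the set of blocks and which cycles belong to each. The homology dimension of a block is then the rank of the sublattice of the frequency group $\cong H_1(G,\Z)$ generated by the classes of its cycles, which settles the second assertion; loops appear as isolated components contributing dimension one.

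It remains to recover the tree arrangement. For cycles $c,c'$ in different blocks the classes are edge–disjoint, so the minimal periodic orbit representing $[c]+[c']$ must traverse $c$, a shortest connecting path $P$, then $c'$, then $P$ reversed; hence $l(\mu+\mu')=l(\mu)+l(\mu')+2\,d(c,c')$, where $d(c,c')$ is the graph distance between the cycles, and this distance is thereby read off the spectrum. Two blocks meet at a common cut vertex precisely when some pair of their cycles has distance $0$, whereas a positive minimal cross–block distance records the length of the bridge–path separating them. Feeding these inter–cycle distances into a tree–reconstruction of the type used in Lemma \ref{tree_from_leaves} should recover the block structure as a metric tree of fat vertices and bridges.

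I expect this last step to be the genuine obstacle. The difficulty is the bookkeeping forced by blocks with several attaching points: a cross–block shortest path may enter an intermediate block and pick up the internal distance between two of its cut vertices, so the raw inter–cycle distances do \emph{not} form a tree metric on the blocks and cannot be fed naively into the reconstruction. Handling this requires choosing, for each block and each of its cut vertices, a cycle passing through that cut vertex (available because every non–loop block is $2$–connected) so that the within–block contribution to the relevant distances vanishes, and then disentangling the true bridge lengths from the internal block geometry. Making the minimal–periodic–orbit length formula above rigorous, and treating the case of several blocks sharing a single cut vertex, are the remaining points that need care.
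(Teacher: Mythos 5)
Up to the reconstruction of the tree arrangement, your proposal follows the paper's proof: you identify the frequencies whose minimal periodic orbits are cycles via Lemma \ref{recognize_cycle}, group them into blocks by the edge-sharing relation detected through Remark \ref{check_overlap_sign}, read the homology dimension of a block off the rank of its associated frequencies, and extract inter-block distances from $l(\mu+\mu')-l(\mu)-l(\mu')$ (the paper works with a minimal generating set of cycle frequencies rather than your full list of cycles, but that difference is cosmetic). The genuine gap is exactly where you flag it, and your proposed repair does not close it: choosing, per block and cut vertex, a cycle through that cut vertex only removes the within-block contribution at the two \emph{endpoint} blocks, while a shortest connecting path can still traverse an \emph{intermediate} block between two of its cut vertices and pick up internal geometry there. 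So the inter-cycle distances still fail to be additive along the tree, and the reconstruction does not go through as sketched.

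The paper closes this gap with a different idea. A failure of the triangle inequality, $d(B_2,B_3) > d(B_1,B_2)+d(B_1,B_3)$, certifies that $B_1$ is an inner fat vertex through which the $B_2$--$B_3$ path passes and uses edges of $B_1$; the same test applied to pairs $B_i,B_j$ against $B_1$ sorts the remaining blocks into groups according to which cut vertex of $B_1$ their connecting paths attach at ($d(B_i,B_j) > d(B_1,B_i)+d(B_1,B_j)$ means different cut vertices, $\le$ means the same one). Cutting the block structure at $B_1$ into these groups, and recursing on each piece, eventually produces pieces in which every block is a leaf of the block structure, so no remaining distance traverses the interior of a fat vertex. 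Leaflessness of $G$ then guarantees that in each piece all tree-leaves are fat vertices and all interior vertices are genuine vertices of $G$ of degree at least $3$, so Lemma \ref{tree_from_leaves} applies to reconstruct each piece from its leaf-distance table, and the pieces are glued back along the identified inner blocks. This detection of inner blocks by triangle-inequality failures, together with the reduction to the all-blocks-are-leaves case, is the missing idea in your proposal; it also disposes of your worry about several blocks sharing a single cut vertex, since those are exactly the blocks grouped together by the test above.
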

\begin{proof}
 Pick a minimal set of generators $\mu_1, \ldots, \mu_n$ of the group spanned by the frequencies such that the associated minimal 
periodic orbits are all cycles.
 A cycle is necessarily contained within a single block, see \ref{cycle_single_block}. Declare two generators 
 equivalent if the associated cycles share edges regardless of orientation. This generates an equivalence relation. 
Let $\mathfrak{B}$ be the set of equivalence classes, it corresponds to the set of blocks of $G$, see \ref{define_block}. 
The number of generators in each equivalence class is the dimension of the homology of that block.

Let $B_1, B_2 \in \mathfrak{B}$. Let $\{\mu^j_i\}_i$ be the subset of frequencies that is $B_j$, $j=1,2$. Then we can find the distance 
between the two blocks by computing 
\begin{equation*}
 d(B_1,B_2):=\frac{1}{2}\min_{i,i'}\left( l(\mu^1_i + \mu^2_{i'})-l(\mu^1_i)-l(\mu^2_{i'}) \right)
\end{equation*}
That is we compute the distance between any basis cycle in one 
block to any basis cycle in the other and minimize over all pairs of basis cycles in the blocks. This distance is greater equal zero 
and equal to zero if and only if the blocks share a vertex.

We will now set up a situation where we can apply Lemma \ref{tree_from_leaves}. To do so we need to find out which blocks are leaves in 
the block structure and which ones are inner vertices. We will then cut the block structure into smaller pieces such that 
all blocks are leaves in the smaller pieces.

Whenever we have a triple of blocks satisfying $d(B_2,B_3) > d(B_1,B_2)+d(B_1,B_3)$, 
that is, a failure of the triangle inequality, 
we know that $B_1$ has to be an inner vertex in the block structure of $G$. The path between the blocks $B_2$ and $B_3$ has to pass through $B_1$ 
and use some edges within the block $B_1$. Once we have identified a block, say $B_1$, as an inner block we can separate the remaining 
blocks into groups depending on where the path from the block to $B_1$ is attached on $B_1$. If $d(B_i,B_j) > d(B_1,B_i)+d(B_1,B_j)$ then 
the paths from $B_1$ to $B_i$ and $B_j$ 
are attached 
at different cut vertices of $B_1$, if $d(B_i,B_j) \le d(B_1,B_i)+d(B_1,B_j)$ they are attached at the same cut vertex. 
Within each of these groups the block $B_1$ is a leaf in the block structure. 

Thus we have cut the initial block structure into several smaller pieces each of them 
including $B_1$ and $B_1$ is a leaf in each of them.
 We can repeat this process of identifying an inner block and cutting the block structure into smaller pieces on each of these pieces until 
all the pieces have no inner block vertices. This reduces the 
problem to recovering the block structure of graphs where all blocks are leaves. 

All remaining inner vertices have to be vertices of the initial graph $G$ and thus have degree at least $3$. As $G$ is leafless 
all leaves in the block structure are fat vertices. Hence we can recover the block 
structure of each of the smaller pieces by using Lemma \ref{tree_from_leaves}. We can then find the block structure of the entire graph 
by gluing the pieces together at the inner blocks.
\end{proof}

\section{Planarity and dual graphs}

\begin{theorem}
\label{planarity}
The Bloch spectrum determines whether or not a graph is planar.
\end{theorem}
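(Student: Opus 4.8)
The plan is to reduce planarity to the purely homological criterion of Corollary \ref{simple=planar}: a graph is planar if and only if it admits a non-positive basis, that is, a basis of $H_1(G)$ represented by oriented cycles no two of which have edges of positive overlap. Since planarity is thus a property of the cycle space together with its overlap pattern, and since leaves and tree parts contribute nothing to the homology (so $G$ is planar exactly when its cyclic part is), it suffices to recover from the Bloch spectrum the group $H_1(G,\mathbb{Z})$, the set of homology classes whose minimal periodic orbit is an honest cycle, and, for every pair of such cycles, whether they have edges of positive overlap. I would then decide planarity by a finite search for a non-positive basis among the recovered cycles.

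First I would extract the homological data. By Theorem \ref{Bloch_H1}, a generic slice $Spec_{t\alpha}(G)$, $t\in[0,\varepsilon)$, yields the set of frequencies together with the minimal orbit length $l(\mu)$ of each. The frequencies, with their negatives and zero, form a subgroup of $\mathbb{R}$ isomorphic to $H_1(G,\mathbb{Z})$, so their real values record the full abelian group structure and in particular let me recognize $\mathbb{Z}$-bases. The total edge length $\mathcal{L}$ is known from the trace formula \ref{trace_formula}; since any cycle has length at most $\mathcal{L}$ and only finitely many frequencies satisfy $l(\mu)\le\mathcal{L}$, the candidate cycle classes form a finite, enumerable set. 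Applying Lemma \ref{recognize_cycle} to each singles out exactly the classes whose minimal orbit is a cycle; because a graph is a $1$-complex, $H_1=\ker\partial$, so distinct simple cycles give distinct classes and each cycle frequency names a unique oriented cycle, namely the one of positive flux, which is well defined since $\alpha$ is generic.

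Next I would recover the overlap pattern exactly. For two cycle frequencies $\mu_i,\mu_j$ the computation underlying Theorem \ref{determine_Albanese} gives $l(\mu_i+\mu_j)=2p_{ij}+s_{ij}$ and $l(|\mu_i-\mu_j|)=2q_{ij}+s_{ij}$, where $p_{ij}$ and $q_{ij}$ are the lengths of the positive- and negative-overlap edges for the flux orientations and $s_{ij}$ is the length of the symmetric difference. Hence
\begin{equation*}
 p_{ij}=\tfrac12\bigl(l(\mu_i)+l(\mu_j)-l(|\mu_i-\mu_j|)\bigr),\qquad q_{ij}=\tfrac12\bigl(l(\mu_i)+l(\mu_j)-l(\mu_i+\mu_j)\bigr),
\end{equation*}
both computable from the Bloch spectrum; this sharpens the one-sided implications of Remark \ref{check_overlap_sign} into the equivalence I need, namely that $c_i,c_j$ have positive overlap precisely when $p_{ij}>0$. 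Reversing the orientation of a cycle interchanges its positive and negative overlaps, so for any signs $\varepsilon_i,\varepsilon_j\in\{\pm1\}$ the positive overlap of $\varepsilon_i c_i$ and $\varepsilon_j c_j$ equals $p_{ij}$ when $\varepsilon_i\varepsilon_j=1$ and $q_{ij}$ otherwise.

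Finally I would run the search: over all size-$n$ subsets of cycle frequencies forming a $\mathbb{Z}$-basis of $H_1(G,\mathbb{Z})$ (with $n=\dim H_1$, known from the spectrum) and all orientation assignments $\varepsilon\in\{\pm1\}^n$, test whether every pair has vanishing positive overlap in the chosen orientation. Such a basis exists, by Definition \ref{non-positive_basis} and Corollary \ref{simple=planar}, if and only if $G$ is planar, and the search is finite, so the Bloch spectrum decides planarity. The main obstacle I anticipate is the bookkeeping of orientations: the overlap data is read off only for the flux orientations, and one must check that reorientation exactly swaps $p_{ij}\leftrightarrow q_{ij}$ and that the global sign-assignment problem is genuinely captured by the per-pair conditions above. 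A secondary point requiring care is the exactness of the overlap formulas, i.e.\ that the minimal orbit in the class $\mu_i\pm\mu_j$ is really the cancelled union $c_i\cup(\pm c_j)$ with no shorter representative; this is precisely what the proof of Theorem \ref{determine_Albanese} establishes, so I would invoke it rather than reprove it.
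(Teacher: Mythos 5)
Your proof is correct and follows essentially the same route as the paper: reduce planarity to the existence of a non-positive basis via Corollary \ref{simple=planar}, extract the cycle classes and lengths from Theorem \ref{Bloch_H1} and Lemma \ref{recognize_cycle}, detect positive overlap for each orientation choice from the quantities $l(\mu_i\pm\mu_j)-l(\mu_i)-l(\mu_j)$ as in Remark \ref{check_overlap_sign} and Theorem \ref{determine_Albanese}, and run a finite search over oriented cycle bases. Your explicit formulas for $p_{ij}$ and $q_{ij}$ merely make exact the overlap test the paper performs, and your finiteness argument (finitely many classes with $l(\mu)\le\mathcal{L}$) is an equivalent substitute for the paper's observation that a graph has only finitely many cycle bases.
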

\begin{proof}
The homology admits infinitely many bases, but a graph has only finitely many cycles and thus there are only finitely many 
bases consisting of cycles. Thus there are only finitely many minimal sets of generators $\mu_1, \ldots, \mu_n$ of the group spanned by 
the frequencies 
such that the minimal periodic orbits associated to them are all cycles. Given such a basis we can choose for each generator to either 
keep the orientation induced by $\alpha$ or choose the reverse orientation. Denote the basis 
elements with a choice of orientation 
by $\gamma_1=\pm \mu_1, \ldots, \gamma_n=\pm \mu_n$. For any pair of oriented basis elements $\gamma_i$ and $\gamma_j$ we can check 
whether the associated cycles have edges of positive overlap by checking whether 
$l(|\gamma_i+\gamma_j|)-l(|\gamma_i|)-l(|\gamma_j|)>0$, see \ref{check_overlap_sign}. Thus we can check whether the 
$\gamma_1, \ldots, \gamma_n$ correspond to a basis of the homology that consists of oriented cycles having no positive overlap.
The graph is planar if and only if we can find such a basis by Theorem \ref{simple=planar}. 
\end{proof}

\begin{remark}
 Planarity is a property that is not determined by the spectrum of a single Schr\"odinger type operator. There is an example of two isospectral 
quantum graphs in \cite{vonBelow99} where one is planar and the other one is not.
\end{remark}

If the graph is planar we will fix a non-positive basis $\gamma_1, \ldots, \gamma_n$ (see Definition \ref{non-positive_basis}) coming 
from the frequencies $\mu_1, \ldots, \mu_n$.
We know that the basis elements are the boundaries of the inner faces in a suitable embedding of the graph by Lemma 
\ref{positive=facial}. We will use this fact to construct an abstract dual of the graph.

\begin{theorem}
\label{abstract_dual}
 The Bloch spectrum of a planar, 2-connected graph determines a dual of the graph. Thus the Bloch spectrum determines 
planar, 2-connected graphs up to 2-isomorphism (see Lemma \ref{2-isomorphic}).
\end{theorem}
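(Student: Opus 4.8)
The plan is to construct the geometric dual $G^{*}$ explicitly from the data the Bloch spectrum provides and then to invoke Lemma \ref{2-isomorphic}. By Theorem \ref{planarity} we already know that $G$ is planar and that we can exhibit a non-positive basis $\gamma_{1},\ldots,\gamma_{n}$ of $H_{1}(G)$ consisting of oriented cycles with no edges of positive overlap; by Proposition \ref{positive=facial} these are the boundaries of the $n$ bounded faces $F_{1},\ldots,F_{n}$ of a suitable planar embedding, and there is one further, outer, face $F_{0}$. The $n+1$ faces will be the vertices of $G^{*}$, so the work lies in recovering the edge set of $G^{*}$, i.e.\ how many edges each pair of faces shares.

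First I would settle adjacency. Since the basis is non-positive, two bounded faces $F_{i}$ and $F_{j}$ share edges precisely when $\gamma_{i}$ and $\gamma_{j}$ have negative overlap, which by Remark \ref{check_overlap_sign} is detected by the strict inequality $l(\mu_{i}+\mu_{j})<l(\mu_{i})+l(\mu_{j})$. A key structural point, which I would establish next, is that because our graphs are $2$-connected and have no vertices of degree $2$, the edges common to two faces are always isolated single edges: at an interior vertex of a shared boundary path of length at least two the third incident edge would be forced to lie inside one of the two faces, which cannot happen in a $2$-connected graph. Hence the edges shared by $F_{i}$ and $F_{j}$ are separated along $\partial F_{i}$ by non-shared arcs, and their symmetric difference $\gamma_{i}\triangle\gamma_{j}$ is a disjoint union of cycles whose number of components equals the number of shared edges.

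The main obstacle is upgrading length data to these combinatorial edge counts, and I expect this to be the real work of the proof. When $F_{i}$ and $F_{j}$ share a single edge, $[\gamma_{i}]+[\gamma_{j}]$ has a connected minimal representative and the overlap length is $\tfrac{1}{2}\bigl(l(\mu_{i})+l(\mu_{j})-l(\mu_{i}+\mu_{j})\bigr)$ as in Theorem \ref{determine_Albanese}; when they share several edges the minimal \emph{connected} periodic orbit in that class must splice the disjoint components together, so $l(\mu_{i}+\mu_{j})$ strictly exceeds the symmetric-difference length and the naive formula no longer applies. To handle this I would use that the Bloch spectrum gives the length and the mutual overlaps of \emph{every} cycle in $G$ (Lemma \ref{recognize_cycle}, Theorem \ref{Bloch_H1}), and search among the known cycles for the decomposition of $[\gamma_{i}]+[\gamma_{j}]$ into pairwise disjoint cycle-components built from edges lying in exactly one of $\gamma_{i},\gamma_{j}$; the number of components is the sought multiplicity, and the analogous bookkeeping on the arcs of each $\gamma_{i}$ that meet no other $\gamma_{j}$ yields the multiplicity of the $F_{i}$--$F_{0}$ edges.

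Once every vertex and every edge multiplicity of $G^{*}$ is pinned down, $G^{*}$ is by construction the geometric dual of $G$ for the chosen embedding and hence, by the proposition cited from \cite{Diestel05}, an abstract dual of $G$. To finish I would observe that the whole construction consumes only Bloch-spectral data, so two Bloch isospectral planar $2$-connected graphs $G$ and $H$ yield one and the same abstract dual $G^{*}$. Feeding the two edge-bijections $\psi_{1}\colon E(G)\to E(G^{*})$ and $\psi_{2}\colon E(H)\to E(G^{*})$ into the argument of Lemma \ref{2-isomorphic}, the composite $\psi_{2}^{-1}\circ\psi_{1}$ is a $2$-isomorphism, so $G$ and $H$ are $2$-isomorphic, as claimed.
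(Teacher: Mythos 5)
Your overall route is the same as the paper's: use Theorem \ref{planarity} and Proposition \ref{positive=facial} to realize the non-positive basis as face boundaries, take the faces as dual vertices, extract edge multiplicities from the lengths $l(\mu_i+\mu_j)$, and conclude via Lemma \ref{2-isomorphic}. But there is a genuine gap in your edge-counting step. You assert that the symmetric difference $\gamma_i\triangle\gamma_j$ has one cycle-component per shared edge, and that therefore the number of components of the decomposition of $[\gamma_i]+[\gamma_j]$ is the sought multiplicity. This ignores that two faces of a $2$-connected planar graph can, in addition to sharing edges, \emph{touch at single vertices}: in the wheel $W_4$, two opposite triangular faces share only the hub (and this survives $3$-connectivity), and one can build $2$-connected examples with minimum degree $3$ in which two faces share an edge \emph{and} also meet at a further vertex. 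Each such vertex contact contributes its own cycle to the decomposition of the minimal periodic orbit of $\mu_i+\mu_j$, so your count over-reports the number of shared edges. Your structural claim about interior vertices of shared paths only rules out shared paths of length at least two (which is correct, and implicit in the paper); it does not rule out vertex contacts. The paper's Lemma \ref{decomposition} contains exactly the missing correction: after decomposing the minimal orbit into cycles $c_1,\ldots,c_k$ by repeated use of Lemma \ref{recognize_cycle}, it tests each pair $c_l$, $c_{l'}$ for $l(c_l+c_{l'})=l(c_l)+l(c_{l'})$ (distance zero, i.e., a common vertex) and subtracts those instances; the remaining components are the genuine shared edges.

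A second, lesser weakness: your proposed ``search among the known cycles for the decomposition \ldots built from edges lying in exactly one of $\gamma_i,\gamma_j$'' presumes knowledge of the \emph{edge supports} of cycles, which the Bloch spectrum does not directly provide; Theorem \ref{Bloch_H1} gives homology classes, minimal lengths, and (via Remark \ref{check_overlap_sign}) overlap lengths only. The paper's procedure is purely spectral, and it also handles the outer face more cleanly than your arc bookkeeping, by setting $\gamma_0:=-\sum_{l=1}^n\gamma_l$ and feeding $\gamma_0$ into Lemma \ref{decomposition} on the same footing as the bounded faces. Your final step (same spectral data yields the same abstract dual for $G$ and $H$, and composing the two edge bijections gives a $2$-isomorphism) is correct and matches the paper. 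Note also that planarity is essential in the counting lemma, as the paper's $K_{3,3}$ remark shows, so your argument should flag where it is used.
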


Before we show this we need two lemmata.

\begin{lemma}
Let $G$ be planar and $2$-connected. In the embedding where the $\gamma_1, \ldots, \gamma_n$ are the boundaries of the inner faces the 
boundary of the outer face is given by 
\begin{equation*}
\gamma_0:=-\sum_{l=1}^{n}\gamma_l      
\end{equation*}
The sign orients it so that it does not have edges of positive overlap with any of the $\gamma_l$.
\end{lemma}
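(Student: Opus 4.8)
The plan is to read off the identity $\gamma_0 = -\sum_{l=1}^n \gamma_l$ directly from the facial structure of the embedding, using the non-positive overlap condition to force the internal edges to cancel. First I would invoke Proposition \ref{positive=facial} to recall that in the fixed embedding the cycles $\gamma_1,\ldots,\gamma_n$ are exactly the boundaries of the bounded (inner) faces, carrying the orientation of the chosen non-positive basis. Since $G$ is $2$-connected it has no bridges, so every edge $e$ of $G$ borders exactly two \emph{distinct} faces. This partitions the edges into two types: \emph{internal} edges, shared by two inner faces $\gamma_i$ and $\gamma_j$, and \emph{boundary} edges, shared by a single inner face $\gamma_i$ and the outer face.

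Next I would compute the chain $\sum_{l=1}^n \gamma_l$ in $C_1(G,\Z)$ edge by edge. For an internal edge shared by $\gamma_i$ and $\gamma_j$, the non-positive basis condition (Definition \ref{non-positive_basis}) forbids positive overlap, so $\gamma_i$ and $\gamma_j$ must traverse $e$ in opposite directions; hence their two contributions cancel and $e$ does not appear in $\sum_l \gamma_l$. For a boundary edge, $e$ lies on exactly one inner face, so it appears in the sum with coefficient $\pm 1$. Consequently $\sum_l \gamma_l$ is supported precisely on the edges of the outer face, each traversed exactly once.

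Then I would identify $\gamma_0 := -\sum_l \gamma_l$ with the boundary cycle of the outer face. It is a genuine cycle, being a $\pm 1$-chain of zero boundary (each $\gamma_l$ is a cycle, so $\partial(\sum_l\gamma_l)=0$) supported exactly on the edge set of the outer-face boundary, which is itself a cycle because $G$ is $2$-connected; a $\pm 1$-chain of boundary zero supported on the edges of a cycle must equal that cycle with one of its two orientations. The overall minus sign then fixes the orientation so that $\gamma_0$ runs opposite to the incident inner face along each boundary edge.

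Finally, for the overlap claim: any edge shared between $\gamma_0$ and some $\gamma_l$ is necessarily a boundary edge whose unique incident inner face is $\gamma_l$, and along it $\gamma_0$ reverses the traversal of $\gamma_l$ because of the minus sign. Thus $\gamma_0$ meets each $\gamma_l$ only in edges of negative overlap and never in edges of positive overlap (Definition \ref{overlap}). The one point demanding care is the edge-type bookkeeping—that every edge borders two distinct faces, which is exactly where $2$-connectedness enters—together with the verification that $-\sum_l\gamma_l$ is the honest outer boundary cycle rather than a mere formal combination; both reduce to standard facts about embeddings of $2$-connected planar graphs, so I expect no serious obstacle beyond stating them cleanly.
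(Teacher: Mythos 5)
Your proof is correct and complete. Note that the paper states this lemma \emph{without} proof --- the only justification it supplies is the single remark following the statement, that the boundary of the outer face is a cycle because $G$ is $2$-connected --- so your argument fills in exactly what the paper leaves implicit. Your two key steps are the right ones and are consistent with the paper's setup: every edge of a $2$-connected plane graph lies on exactly two distinct faces (no bridges), and the non-positive overlap condition forces the two inner faces sharing an internal edge to traverse it in opposite directions, so internal contributions cancel in $\sum_{l=1}^{n}\gamma_l$, leaving a $\pm 1$-chain with zero boundary supported precisely on the outer face boundary, which must then be that cycle with one orientation. This matches the orientation picture the paper uses in the proof of Corollary \ref{simple=planar} (inner face boundaries oriented counterclockwise, no basis cycle inside another), and your final observation that the minus sign makes $\gamma_0$ reverse each incident inner face along every shared edge correctly yields the no-positive-overlap claim.
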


The boundary of the outer face $\gamma_0$ is a cycle in the graph because $G$ is $2$-connected.

\begin{lemma}
\label{decomposition}
Let $G$ be planar and $2$-connected. 
Then we can determine the number of edges that any two of the cycles $\gamma_0, \ldots, \gamma_n$ have in common.
\end{lemma}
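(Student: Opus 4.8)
The plan is to recover the combinatorial adjacency pattern of the faces from the overlap data and then read off the multiplicities from the planar topology of the fixed embedding, so that lengths are used only to decide \emph{which} faces are adjacent, never to count edges. First I would record what is already available: by Theorem \ref{determine_Albanese} and Remark \ref{check_overlap_sign} the Bloch spectrum gives, for every pair among the cycles $\gamma_0,\dots,\gamma_n$, the total length $L_{ij}$ of their overlap, and in the fixed non-positive orientation this overlap is entirely negative. In particular we know exactly which pairs $\{i,j\}$ satisfy $L_{ij}>0$, i.e.\ which faces $F_i,F_j$ share at least one edge. This determines the simple graph $G^{*}$ on the vertex set $\{0,\dots,n\}$ whose edges are the adjacent face-pairs, and the remaining task is to promote each such adjacency to the exact number $n_{ij}$ of common edges.

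The key structural observation is that the edges common to two faces are pairwise non-adjacent. Fix the embedding supplied by Proposition \ref{positive=facial}, in which $\gamma_1,\dots,\gamma_n$ are the inner facial cycles and $\gamma_0$ the outer one; since $G$ is $2$-connected every facial boundary is a cycle meeting each vertex at most once. If two edges shared by $F_i$ and $F_j$ met at a common vertex $v$, then both would be boundary edges of $F_i$ at $v$ and of $F_j$ at $v$; as $G$ has no vertex of degree $2$ there is a third edge at $v$, and tracing the cyclic order of faces around $v$ forces one of $F_i,F_j$ to occupy two distinct sectors, which is impossible in a $2$-connected plane graph. Hence the $n_{ij}$ common edges of $F_i$ and $F_j$ are mutually disjoint arcs.

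With this in hand I would compute $n_{ij}$ purely topologically. Let $R$ be the open region formed by the interiors of $F_i$, $F_j$ and of the $n_{ij}$ shared edges; it is two open disks glued along $n_{ij}$ disjoint arcs, hence a planar region with Euler characteristic $2-n_{ij}$ and therefore with exactly $n_{ij}$ boundary circles, so its complement in the sphere has $n_{ij}$ connected components. Each such component is a union of the remaining faces, and two of them lie in the same component precisely when they can be joined by a chain of shared edges avoiding $F_i$ and $F_j$; that is, the components are exactly the connected components of $G^{*}-\{i,j\}$. Therefore $n_{ij}$ equals the number of connected components of $G^{*}-\{i,j\}$, a quantity that depends only on the adjacency data already recovered. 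The degenerate case $G^{*}-\{i,j\}=\emptyset$ would force $G$ to be a single cycle and is excluded by the absence of degree-$2$ vertices.

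The main obstacle is conceptual rather than computational: the overlap length $L_{ij}$ by itself cannot distinguish one long common edge from several short ones, so the count cannot be extracted from lengths directly. The whole point is to bypass lengths for the counting step and instead exploit that a genuine planar embedding has been fixed, turning the multiplicity $n_{ij}$ into the connectivity invariant of $G^{*}-\{i,j\}$. The steps demanding the most care are the disjointness of the shared edges, where both the $2$-connectivity (facial boundaries are cycles) and the no-degree-$2$ hypothesis are essential, and the verification that distinct boundary circles of $R$ correspond to distinct components of $G^{*}-\{i,j\}$ even when two of them happen to meet at a single vertex of $G$.
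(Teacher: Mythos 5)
There is a genuine gap, and it sits exactly at the point you flagged as delicate at the end: the identification of the complementary regions of $R$ with the connected components of $G^*-\{i,j\}$. Your topological count is fine as far as it goes ($R$ is intrinsically two disks glued along $n_{ij}$ disjoint arcs, so by Alexander duality its complement in the sphere has exactly $n_{ij}$ components), but the passage to the simple face-adjacency graph $G^*$ fails because two facial cycles in a $2$-connected plane graph can share \emph{isolated vertices} in addition to edges. At such a vertex $v$ a ``hole'' of $R$ pinches: the complement stays connected through $v$, while the faces in the two lobes on either side of $v$ need not be joined by any chain of edge-sharing faces avoiding $F_i$ and $F_j$. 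Concretely, take an outer hexagon $p_1p_2vp_3p_4q$ with $F_i$ the outer face, and an inner face $F_j$ whose boundary uses the shared edges $p_1p_2$ and $p_3p_4$ and passes through $v$ along interior paths $p_2\hbox{-}a_1\hbox{-}a_2\hbox{-}a_3\hbox{-}v$ and $v\hbox{-}b_1\hbox{-}b_2\hbox{-}b_3\hbox{-}p_3$; fill each lobe with a hub vertex joined to the three path vertices, and triangulate the second hole with one chord. All vertices have degree at least $3$ and the graph is $2$-connected, yet $n_{ij}=2$ while $G^*-\{i,j\}$ has three components (lobe $A$, lobe $B$, and the second hole), since the pentagonal faces of the two lobes meet only at $v$ and share no edge. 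So your formula returns $n_{ij}+(\#\text{isolated shared vertices on hole boundaries})$, which overcounts. (Your intermediate claim that shared edges are pairwise non-adjacent is correct given the no-degree-$2$ assumption, but it does not address this failure mode.)

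This is precisely the difficulty the paper's proof is organized around: its statement of the situation explicitly allows $\gamma_i$ and $\gamma_j$ to share ``$k$ edges or single vertices.'' The paper decomposes the minimal periodic orbit in the class $\mu_i+\mu_j$ into cycles $c_1,\ldots,c_k$ by repeated use of Lemma \ref{recognize_cycle}, so that $k$ counts all shared components, edges and vertices alike, and then distinguishes the vertex-components from the edge-components spectrally: a shared single vertex corresponds to a pair $c_l$, $c_{l'}$ at distance zero, detected by the test $l(c_l+c_{l'})=l(c_l)+l(c_{l'})$, and these instances are discarded, leaving exactly the common edges. To repair your argument you would need an analogous step that detects and subtracts the pinch points, i.e.\ the isolated shared vertices of $F_i$ and $F_j$; without it the count extracted from $G^*-\{i,j\}$ is simply wrong on graphs such as the one above.
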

\begin{proof}
Recall that $\mu_i=|\gamma_i|$. If $l(\mu_i+\mu_j) \ge l(\mu_i)+l(\mu_j)$ then $\gamma_i$ and $\gamma_j$ share no edges. 
We will assume from now on that $l(\mu_i+\mu_j) < l(\mu_i)+l(\mu_j)$. 
Suppose $\gamma_i$ and $\gamma_j$ share $k$ 
edges or single vertices. 

\begin{figure}[ht]
\centering
\scalebox{0.9}{\includegraphics{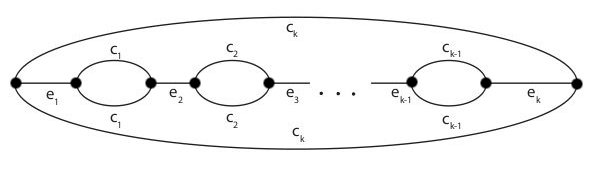}}
\caption{The graph with the two cycles $\gamma_i$ and $\gamma_j$}
\label{determine_edge_count}
\end{figure}

Figure \ref{determine_edge_count} shows the graph $G$. Here $\gamma_i$ and $\gamma_j$ bound the two big faces and the $e_l$ are 
the edges or single vertices these two cycles share. The remainder 
of the graph is contained inside the small cycles labeled $c_l$, $l=1,\ldots ,k-1$ and outside the big cycle $c_k$.

We can decompose the minimal periodic orbit associated to the frequency $\mu_i+\mu_j$ into several cycles $c_1, \ldots, c_k$ by applying 
Lemma \ref{recognize_cycle} repeatedly. These cycles do not share any edges, they 
can share vertices. If the decomposition yields $k$ cycles, then $\gamma_i$ and $\gamma_j$ have $k$ distinct 
components in common. Each of these components is either a single edge or a vertex. Whenever it is a vertex that means that two of the $c_l$ have 
this vertex in common and thus have distance zero from each other. As we can check for any pair $c_l$ and $c_{l'}$ whether 
$l(c_l+c_{l'})=l(c_l)+l(c_{l'})$ we can find all instances where this happens. All remaining common components then must correspond to a common
edge of $\gamma_i$ and $\gamma_j$.
\end{proof}

\begin{remark}
 Lemma \ref{decomposition} is false without the planarity assumption. There exist two cycles $\gamma_1, \gamma_2$ in $K_{3,3}$ that 
share $3$ edges but $\gamma_1 \cup \gamma_2$ is homologous to a single cycle. These two cycles have no edges of positive overlap.
\end{remark}

\begin{proof}
 of Theorem \ref{abstract_dual}:\\
The cycles $\gamma_0, \ldots, \gamma_n$ are the set of all boundaries of faces in a suitable embedding of the graph. Therefore they are 
the vertices of a geometric dual. By 
Lemma \ref{decomposition} we know the number of edges any two of these faces have in common, which corresponds to the number of edges 
between the two vertices in the geometric dual.
\end{proof}

The particular geometric dual we get from this process depends on the non-positive basis we have chosen.

\begin{corollary}
\label{determine_3con_combo}
 The Bloch spectrum identifies and determines planar, $3$-connected graphs combinatorially. 
\end{corollary}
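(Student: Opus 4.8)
The plan is to bootstrap from the three results already in hand: the planarity test of Theorem \ref{planarity}, the reconstruction up to $2$-isomorphism of Theorem \ref{abstract_dual}, and the rigidity of $3$-connected planar graphs under $2$-isomorphism (Whitney's theorem, which enters here as the uniqueness of the dual for $3$-connected graphs on p.105 of \cite{Diestel05}). First I would invoke Theorem \ref{planarity} to decide whether the underlying graph is planar; if it is not, the graph fails to lie in our class and we are finished. Assuming planarity, I would read off the block structure via Theorem \ref{block_structure}. Since a $3$-connected graph is in particular $2$-connected and hence a single block, I can immediately discard any graph whose block structure is not a single fat vertex. This step also verifies the $2$-connectedness hypothesis needed to apply Theorem \ref{abstract_dual}.

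Next, the graph being planar and $2$-connected, Theorem \ref{abstract_dual} produces an abstract dual $G^*$ and, through Lemma \ref{2-isomorphic}, pins down $G$ only up to $2$-isomorphism. The crucial structural fact is that $3$-connectivity collapses this ambiguity completely: by Whitney's theorem a $3$-connected planar graph is determined by its cycle matroid up to isomorphism, so the \emph{entire} $2$-isomorphism class of such a graph is a single isomorphism class. Equivalently, if $G$ is $3$-connected then $G^*$ is unique up to isomorphism, and taking the dual of $G^*$ recovers $G$ itself. Thus for graphs in our class the output of Theorem \ref{abstract_dual} is in fact an isomorphism invariant, not merely a $2$-isomorphism invariant.

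To carry out both the identification and the reconstruction, I would construct a concrete candidate graph $H$ from the reconstructed dual (for instance as a geometric dual $(G^*)^*$, which is automatically $2$-isomorphic to $G$) and then test the purely combinatorial property of whether $H$ is $3$-connected. The key observation making this work is that $3$-connectivity is a $2$-isomorphism invariant in the strong sense above: a $2$-isomorphism class containing a $3$-connected graph consists of that single graph up to isomorphism. Hence if $H$ is $3$-connected, then $G\cong H$ and $G$ is $3$-connected; and if $H$ is not $3$-connected, then no member of the class is, so $G$ is not $3$-connected. This simultaneously certifies membership in the class of planar $3$-connected graphs and exhibits the combinatorial graph.

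The step I expect to be the main obstacle is exactly the passage from \emph{determined up to $2$-isomorphism} to \emph{determined up to isomorphism}, i.e. the recognition of $3$-connectivity itself. Theorem \ref{abstract_dual} alone leaves genuine ambiguity, since distinct non-isomorphic graphs can share a dual (as in Figure \ref{non_unique_dual}), and the whole corollary rests on the fact that this ambiguity vanishes precisely in the $3$-connected case. Making this rigorous requires a careful appeal to Whitney's $2$-isomorphism theorem together with the involutivity $G \cong (G^*)^*$ for $3$-connected planar graphs, and a verification that the candidate $H$ extracted from the dual is truly $3$-connected rather than only $2$-connected.
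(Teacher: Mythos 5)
Your proposal is correct and follows essentially the same route as the paper: test planarity via Theorem \ref{planarity}, verify $2$-connectedness from the block structure, build the dual via Theorem \ref{abstract_dual}, and pass to the dual of the dual, which is isomorphic to $G$ precisely because $3$-connectivity makes the dual unique. Your only addition is to spell out, via Whitney's $2$-isomorphism theorem, the rigidity step that the paper compresses into the single sentence ``if the dual of the dual is $3$-connected it will be unique and therefore isomorphic to the original graph,'' which is a welcome clarification rather than a different argument.
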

\begin{proof}
Note that $3$-connected implies $2$-connected, see Definition \ref{k-connected}.
A graph is $2$-connected if its block structure 
consists of a single fat vertex. We have shown that we can identify planar graphs in Theorem \ref{planarity}. 
We found a geometric dual of a $2$-connected planar graph in Theorem \ref{abstract_dual}. If the dual of the dual is 
$3$-connected it will be unique and therefore isomorphic to the original graph. 
\end{proof}

\chapter{Determining the edge lengths}

In this chapter we will show that we can recover all the edge lengths of a $3$-connected planar graph if we know the underlying 
combinatorial graph.

The Bloch spectrum only gives us a map from the abstract torus $H^1(G,\R) \slash H^1(G,\Z)$ to the spectra.
By Theorem \ref{Bloch_H1} we know the length of the minimal periodic orbit(s) associated to each element in $H_1(G,\Z)$. 
Here we need a little more, we 
want to associate the lengths we get from the Bloch spectrum with the periodic orbits in the combinatorial graph. 

When we construct a dual graph in Theorem \ref{abstract_dual} we can keep track of the lengths associated to the 
minimal set of generators $\mu_1, \ldots, \mu_n$ of the group spanned by the frequencies. The vertices of the 
dual graph correspond to these frequencies. The vertices of the dual graph then correspond to a set of cycles in the dual of the dual 
that generates the homology. If the graph is $3$-connected the dual of the dual is isomorphic to the original graph so we can associate 
the frequencies and their lengths to the periodic orbits in the graph.

\begin{theorem}
\label{edge_length}
The Bloch spectrum identifies and completely determines $3$-connected planar quantum graphs.
\end{theorem}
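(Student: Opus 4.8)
The plan is to combine the combinatorial reconstruction already in hand with the edge-length recovery of Lemma \ref{determine_edge_length}. That lemma asserts that, for a $3$-connected combinatorial graph $G_0$, knowledge of $G_0$ together with the length of every cycle pins down every individual edge length. So the whole theorem reduces to two tasks: first recover the underlying combinatorial graph $G_0$ from the Bloch spectrum, and second extract the length of each of its cycles. The first task is essentially done: by Theorem \ref{planarity} we detect planarity, and by Corollary \ref{determine_3con_combo} we recover $G_0$ combinatorially (and certify that it lies in the planar, $3$-connected class) from the Bloch spectrum.

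The substance of the argument is therefore to transfer the metric data. Theorem \ref{Bloch_H1} hands us, for each class in $H_1(G,\Z)$, the length of its minimal periodic orbit(s); and since every cycle is the unique minimal periodic orbit in its homology class (it carries no backtracks), the length attached to $[c]$ is exactly the genuine length $L(c)$ of the cycle $c$. What is missing is a dictionary between the \emph{abstract} frequency data coming out of the spectrum and the \emph{concrete} cycles of the now-known graph $G_0$. First I would build this dictionary as in the construction of Theorem \ref{abstract_dual}: the minimal generating set $\mu_1,\dots,\mu_n$ of the group spanned by the frequencies corresponds to a non-positive basis of $H_1$, realized by the facial cycles of a planar embedding, and these are precisely the vertices of the geometric dual produced there. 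Because $G_0$ is $3$-connected, its dual is unique and the dual of the dual is isomorphic to $G_0$; this isomorphism identifies each generating frequency $\mu_i$ with an explicit cycle of $G_0$.

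Once this identification is fixed, I would compute the length of every cycle. The map $\Psi$ of Definition \ref{frequencies} is a linear isomorphism from $H_1(G,\Z)$ onto the group spanned by the frequencies, so any cycle $c$ of $G_0$, written in the basis supplied by the facial cycles, has a determined signed flux $\Psi([c])$ and hence a determined frequency $|\Psi([c])|$; reading off the associated length $l(|\Psi([c])|)$ via Theorem \ref{Bloch_H1} yields $L(c)$. The relative orientations needed to form these integer combinations are recovered from length comparisons exactly as in Remark \ref{check_overlap_sign}. With $G_0$ and the length of every cycle in hand, Lemma \ref{determine_edge_length} returns all edge lengths, completing the reconstruction of the quantum graph both combinatorially and metrically.

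The delicate step, and the one I expect to be the main obstacle, is the bridge of the previous paragraph: matching the orientation-ambiguous, purely numerical frequencies to specific oriented cycles in $G_0$. This is exactly where $3$-connectedness is indispensable — for a merely $2$-connected planar graph the dual need not be unique (Lemma \ref{2-isomorphic}), so the dual of the dual need not recover $G_0$ on the nose and the frequency-to-cycle correspondence can break down, as the $K_{3,3}$ phenomenon noted after Lemma \ref{decomposition} already warns. Everything else — detecting planarity, the sign bookkeeping, and the final linear extraction of edge lengths — is routine given the tools assembled above.
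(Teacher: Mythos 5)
Your proposal is correct and follows essentially the same route as the paper: recover the combinatorial graph via Corollary \ref{determine_3con_combo}, use the dual-of-the-dual isomorphism (valid by $3$-connectedness) to match the frequencies $\mu_1,\ldots,\mu_n$ and their lengths to concrete cycles of $G_0$, extend by linearity of $\Psi$ (with orientations fixed as in Remark \ref{check_overlap_sign}) to get the length of every cycle, and finish with Lemma \ref{determine_edge_length}. Your write-up in fact makes explicit the frequency-to-cycle dictionary that the paper only sketches in the remarks preceding the theorem.
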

\begin{proof}
We have already shown in Corollary \ref{determine_3con_combo} that the Bloch spectrum identifies $3$-connected planar quantum graphs 
and determines their underlying combinatorial graph. All that remains to be shown is that we can determine all the edge lengths.
By the remarks above the Bloch spectrum determines the length of all cycles in the graph. We can now apply Lemma 
\ref{determine_edge_length} to determine the lengths of all the edges.
\end{proof}

\begin{remark}
 One can show that given the Bloch spectrum and the underlying combinatorial graph of an arbitrary quantum graph one can associate 
the frequencies from the Bloch spectrum and their lengths to the closed walks in the combinatorial graph. However, using this 
information to determine all the individual edge lengths is more delicate.
\end{remark}

\chapter{Disconnected graphs}

\begin{remark}
 When we bounded the size of isospectral families in Theorem \ref{isospectral_bound} we estimated the total number of 
combinatorial graphs with a
given number of edges. This estimate counts combinatorial graphs independent of whether there are connected or not. Therefore
the bound given in this theorem is valid for all quantum graphs, connected or not.
\end{remark}

If we do not assume that the quantum graph is connected we get a component-wise version of Theorem \ref{Bloch_H1}.

\begin{proposition}
Let $G$ be a quantum graph that may or may not be connected. Then the spectrum of the standard Laplacian $\Delta_0$ determines 
the number of connected components. Denote the connected components by $G_1, \ldots, G_k$. 

Given a generic $\alpha$, see Definition \ref{generic}, the part of the Bloch spectrum $Spec_{t\alpha}(G)$ for $t\in [0,\varepsilon)$ 
 determines the groups $H_1(G_i, \Z)$ and the length of the minimal periodic orbit(s) of each element in $H_1(G_i, \Z)$ for 
each component $i=1, \ldots, k$. 
\end{proposition}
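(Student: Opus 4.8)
The plan is to bootstrap the connected case, Theorem~\ref{Bloch_H1}, to the disconnected setting: first extract the full list of frequencies together with their minimal lengths exactly as before, and then sort these frequencies by connected component. The count of components is immediate from the single spectrum $Spec_0(G)$, since the kernel of $\Delta_0$ is spanned by the functions constant on each component, so the multiplicity of the eigenvalue $0$ equals $\dim H^0(G,\Z)=k$. For the homological data I would use that the cohomology torus splits as a product $H^1_{dR}(G,\R)/H^1_{dR}(G,\Z)\cong\prod_i H^1_{dR}(G_i,\R)/H^1_{dR}(G_i,\Z)$, that the eigenvalues of $\Delta_{t\alpha}$ on $G$ are the union with multiplicity of those on the $G_i$, and hence that the Fourier-transformed trace formula of Corollary~\ref{fourier_trace_formula} is the sum of the component trace formulae. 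Running the argument of Theorem~\ref{Bloch_H1} verbatim on $Spec_{t\alpha}(G)$, $t\in[0,\varepsilon)$, and applying Lemma~\ref{determine_cosine}, I read off the entire set of frequencies $\mu$ (see Definition~\ref{frequencies}) and the minimal length $l(\mu)$ of each. Since density of the line $t\alpha$ in a product torus forces density in every factor, each restriction $\alpha_i$ is itself generic for $G_i$, so $\Psi$ is injective on $H_1(G,\Z)=\bigoplus_i H_1(G_i,\Z)$ and each frequency pins down a unique homology class up to sign.

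The crux is that a periodic orbit is a \emph{connected} closed walk, so every realized frequency comes from a class supported in a single component, while conversely every class in $H_1(G_i,\Z)$ is represented by a connected periodic orbit in $G_i$. This yields a clean separation criterion. Choosing a basis of generators $\mu_1,\dots,\mu_n$ whose minimal orbits are cycles (Lemma~\ref{basis_of_cycles}), as in the proof of Theorem~\ref{determine_Albanese}, I would declare $\mu_i$ and $\mu_j$ to lie in the same component precisely when $\mu_i+\mu_j$ occurs as a frequency. If the associated cycles $c_i,c_j$ share a component, I can join them by a path and traverse both while doubling back along the path, producing a connected periodic orbit that realizes $[c_i]+[c_j]$; its minimal representative has no backtracks, so by Lemma~\ref{positivity} its coefficient is strictly positive and $\mu_i+\mu_j$ genuinely appears. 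If they lie in different components, then $[c_i]+[c_j]$ has nonzero support in two components and so is not representable by any connected closed walk; by injectivity of $\Psi$ the value $\mu_i+\mu_j$ also cannot equal the flux of any other periodic orbit, so it does not appear. This relation is automatically an equivalence relation, its classes partition the generators into the groups $H_1(G_i,\Z)$, and restricting the already-determined lengths $l(\mu)$ to each class yields the minimal lengths per component.

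I expect the main obstacle to be the non-appearance half of the criterion, namely arguing rigorously that a frequency $\mu_i+\mu_j$ coming from two distinct components is never produced spuriously. This rests on the injectivity of $\Psi$ granted by genericity, on the fact that the frequencies in the trace formula are exactly the absolute fluxes of honest periodic orbits, and on the connectedness of closed walks; the positive-coefficient bookkeeping from Lemma~\ref{positivity} is needed in the other direction to guarantee that the genuine same-component frequencies are not cancelled before they can be observed.
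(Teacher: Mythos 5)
Your proposal is correct and takes essentially the same route as the paper's own proof: the component count from the multiplicity of the eigenvalue $0$, a verbatim rerun of the argument of Theorem \ref{Bloch_H1} to extract the frequencies and their minimal lengths, and the criterion that two frequencies belong to the same component exactly when their sum occurs as a frequency. Your only addition is to spell out both directions of that criterion (the doubled-path construction of a connected representative with positive coefficients via Lemma \ref{positivity}, and the non-representability of mixed-support classes together with injectivity of $\Psi$ from genericity), which the paper asserts without detail.
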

\begin{proof}
The multiplicity of the eigenvalue $0$ of the standard Laplacian $\Delta_0$ is equal to the number of connected components.

The trace formula in Theorem \ref{trace_formula} still holds for disconnected graphs. The two sums over eigenvalues and periodic 
orbits are just unions over the connected components. The total length of the quantum graph is additive and the Euler characteristic 
is well defined for disconnected graphs, too.

Thus we can copy most of the proof of Theorem \ref{Bloch_H1} verbatim and read out a set of frequencies from the Bloch spectrum. 
Every frequency we get is associated to a single periodic orbit that belongs to only one of the connected components.
If the sum of two frequencies is a frequency, then these two frequencies belong 
to the same connected component of $G$. If it is not they belong to different connected components.
Thus the set of frequencies (union their negatives and zero) will not form one finitely generated free abelian subgroup of $\R$ that 
is isomorphic to $H_1(G,\Z)$. Instead it will form $k$ disjoint (apart from zero) finitely generated free abelian subgroups 
of $\R$ that are isomorphic to the $H_1(G_i,\Z)$ for $i=1, \ldots, k$. 

We can now assign a length to each frequency the same way as in Theorem \ref{Bloch_H1}.
\end{proof}

As all our subsequent theorems are just consequences of Theorem \ref{Bloch_H1} they also hold component-wise.

\begin{corollary}
 Theorems \ref{determine_Albanese}, \ref{block_structure}, \ref{planarity}, \ref{abstract_dual} and \ref{edge_length} all hold 
component-wise.
\end{corollary}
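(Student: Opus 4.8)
The plan is to deduce the corollary directly from the preceding Proposition, which already supplies the component-wise analogue of the key Theorem \ref{Bloch_H1}. Recall that each of the five theorems in question is proved using only three pieces of data: the set of frequencies $\mu$ associated to the fixed generic $\alpha$, the length $l(\mu)$ of the minimal periodic orbit(s) realizing each frequency, and the additive relations among frequencies (which frequency is a sum or difference of two others). The Proposition shows that for a disconnected graph $G = G_1 \sqcup \cdots \sqcup G_k$ this data is still read off from $Spec_{t\alpha}(G)$ for $t \in [0,\varepsilon)$, and, crucially, that it splits: the frequencies partition into $k$ subsets, each generating a free abelian group isomorphic to $H_1(G_i,\Z)$, with the partition recovered by the rule that $\mu$ and $\mu'$ lie in the same component exactly when $\mu + \mu'$ is again a frequency. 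So first I would fix this partition and thereafter argue one component at a time.

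Second, I would observe that each theorem's proof is \emph{component-local}, in the sense that it only ever manipulates frequencies, lengths, and sums or differences belonging to a single component. This is automatic because $H_1(G,\Z) = \bigoplus_i H_1(G_i,\Z)$: every minimal periodic orbit lies in one component, a cycle cannot cross components, and a sum of frequencies from two distinct components is not a frequency at all, since no periodic orbit realizes it. Consequently, once attention is restricted to the frequency subset belonging to $G_i$, that subset is indistinguishable from the frequency data that Theorem \ref{Bloch_H1} produces for the connected graph $G_i$ taken in isolation. Running the proof of Theorem \ref{determine_Albanese}, \ref{block_structure}, \ref{planarity}, \ref{abstract_dual}, or \ref{edge_length} verbatim on this restricted data then recovers, respectively, $Alb(G_i)$, the block structure of $G_i$, the planarity of $G_i$, a dual of $G_i$ (when $2$-connected), and its edge lengths (when $3$-connected planar).

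For the Albanese statement I would additionally record how the global invariant assembles from the pieces: since cycles in distinct components share no edges, the inner product of Lemma \ref{inner_product} is block diagonal, so $Alb(G)$ is the orthogonal Riemannian product of the $Alb(G_i)$ and is determined once each factor is. For the block-structure statement the cross-component distance $d(B_a,B_b)$ simply fails to be defined, because the relevant sum is not a frequency; this is precisely the signal that $B_a$ and $B_b$ lie in different components, so the reconstruction yields a disjoint union of the per-component block structures rather than a single tree. Planarity, duality, and edge-length reconstruction hold for $G$ if and only if they hold for each component, so nothing further is required.

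I do not expect a serious obstacle, since the real work was already carried out in establishing the component-wise version of Theorem \ref{Bloch_H1} in the Proposition. The one point deserving genuine care is checking that the component partition furnished by the Proposition coincides with the partition implicitly used by each downstream proof, i.e. that no theorem secretly compares cycles across components; but the direct-sum decomposition of the homology makes every such comparison vacuous, so each proof transfers without modification.
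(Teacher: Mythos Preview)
Your proposal is correct and follows the same approach as the paper: the paper's entire justification is the single sentence preceding the corollary, ``As all our subsequent theorems are just consequences of Theorem \ref{Bloch_H1} they also hold component-wise,'' and you have simply unpacked this observation in detail. Your additional remarks about the orthogonal decomposition of $Alb(G)$ and the undefined cross-component distances are correct elaborations but go beyond what the paper records.
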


\backmatter

\bibliographystyle{amsalpha}
 \appendix
\addcontentsline{toc}{chapter}{Bibliography}
\bibliography{/home/ralf/personalfiles/phd/literatur}



\end{document}